\newcounter{spec}
{\end{list}}
\renewcommand{\P}{{\mathbf P}}
\newcommand{\N}{{\mathbb N}}
\newcommand{\Z}{{\mathbb Z}}
\newcommand{\Q}{{\mathbb Q}}
\newcommand{\C}{{\mathbb C}}
\newcommand{\oi}{\hskip1mm {\buildrel \simeq \over \rightarrow} \hskip1mm}
\newcommand{\Br}{{\operatorname{Br \  }}}
\newcommand{\Ker}{{\operatorname{Ker}}}
\newcommand{\Hom}{{\operatorname{Hom}}}
\newcommand{\Cores}{{\operatorname{Cores}}}
\newcommand{\Spec}{{\operatorname{Spec \ }}}
\newcommand{\cd}{{\operatorname{cd }}}
\newcommand{\Norm}{{\operatorname{Norm}}}
\renewcommand{\lim}{\varprojlim}
\numberwithin{equation}{section}
\newfont{\gothic}{eufb10}
\newtheorem{theo}{Th\'{e}or\`{e}me}[section]
\newtheorem{prop}[theo]{Proposition}
\newtheorem{lem}[theo]{Lemme}
\newtheorem{cor}[theo]{Corollaire}
\theoremstyle{definition}
\newtheorem{defi}[theo]{D\'efinition}
\theoremstyle{remark}
\newtheorem{rema}[theo]{Remarque}
\newtheorem{ex}[theo]{Exemples}
\newcommand{\bthe}{\begin{theo}}
\newcommand{\ble}{\begin{lem}}
\newcommand{\bpr}{\begin{prop}}
\newcommand{\bco}{\begin{cor}}
\newcommand{\bde}{\begin{defi}}
\newcommand{\ethe}{\end{theo}}
\newcommand{\ele}{\end{lem}}
\newcommand{\epr}{\end{prop}}
\newcommand{\eco}{\end{cor}}
\newcommand{\ede}{\end{defi}}
\newcommand{\et}{{\operatorname{\acute{e}t}}}
\newcommand{\F}{{\mathbb F}}
\newcommand{\G}{{\mathbb G}}
\DeclareFontFamily{U}{wncy}{}
\DeclareFontShape{U}{wncy}{m}{n}{%
<5>wncyr5%
<6>wncyr6%
<7>wncyr7%
<8>wncyr8%
<9>wncyr9%
<10>wncyr10%
<11>wncyr10%
<12>wncyr6%
<14>wncyr7%
<17>wncyr8%
<20>wncyr10%
<25>wncyr10}{}
\DeclareMathAlphabet{\cyr}{U}{wncy}{m}{n}
\begin{document}

  \title[Lois de r\'eciprocit\'e sup\'erieures et points rationnels]
 {Lois de r\'eciprocit\'e sup\'erieures et points rationnels}

\author{J.-L. Colliot-Th\'el\`ene}
\address{C.N.R.S., Universit\'e Paris Sud\\Math\'ematiques, B\^atiment 425\\91405 Orsay Cedex\\France}
\email{jlct@math.u-psud.fr}
\author{R. Parimala}
\address{Department of Mathematics {\&} CS \\
Emory University, Atlanta, Georgia  \\ USA}
\email{parimala@mathcs.emory.edu}
\author{V. Suresh}
\address{Department of Mathematics {\&} CS \\
Emory University, Atlanta, Georgia \\ USA}
\email{suresh.venapally@gmail.com}

\date{}
\maketitle

 \begin{abstract}
Soit $K=\C((x,y))$  ou $K=\C((x))(y)$.   Soit $G$ un $K$-groupe alg\'ebrique lin\'eaire connexe.
Il a \'et\'e \'etabli que si $G$ est $K$-rationnel, c'est-\`a-dire de corps des fonctions transcendant pur sur 
$K$,
si un espace principal homog\`ene sous $G$ a des points rationnels dans tous
les compl\'et\'es de $K$ par rapport aux valuations de $K$, alors il a
un point rationnel.
Nous montrons ici qu'en g\'en\'eral l'hypoth\`ese de $K$-rationalit\'e
ne peut \^etre omise. Nous utilisons pour cela une  obstruction d'un nouveau
type, fond\'ee sur les lois  de r\'eciprocit\'e sup\'erieure sur un sch\'ema de dimension deux. Nous donnons aussi  une famille
d'espaces principaux homog\`enes pour laquelle cette  obstruction raffin\'ee \`a l'existence d'un point rationnel
est la seule obstruction.
 \end{abstract}

\begin{altabstract}
Let  $K=\C((x,y))$  or $K=\C((x))(y)$. Let $G$ be a connected linear algebraic group over $K$.
Under the assumption that the $K$-variety $G$ is $K$-rational, i.e. that the function field
is purely transcendental, it was proved that a principal homogeneous space of $G$
has a rational point over $K$ as soon as it has one over each completion of $K$
with respect to a valuation.  In this paper we show that one cannot in general do without
the $K$-rationality assumption. To produce our examples, we introduce a new type of
obstruction. It is based on higher reciprocity laws on a 2-dimensional scheme. We also produce a  family of principal
homogeneous spaces for which the refined obstruction controls exactly the existence of
rational points.

\end{altabstract}

  \tableofcontents

\section{Introduction}

\subsection{Le cadre}

Soient  $\mathcal X$ un sch\'ema r\'egulier int\`egre de dimension 2, 
  $R$ un anneau  local int\`egre,  hens\'elien, excellent, 
de corps r\'esiduel $k$
et $p : {\mathcal X} \to \Spec R$ un morphisme projectif surjectif
satisfaisant l'une des conditions suivantes.

(a)  L'anneau $R$ est un anneau de valuation discr\`ete,
 les fibres de $p$
sont de dimension 1, la fibre g\'en\'erique est lisse et g\'eom\'etriquement int\`egre.
On appellera
cela le cas \og semi-global  \fg.

(b)  L'anneau  $R$ est  de dimension 2, et $p$ est birationnel. On appellera
cela le cas \og  local  \fg.

On note $O$ le point ferm\'e de $\Spec R$.
Si $p : {\mathcal X} \to \Spec R$ n'est pas un isomorphisme,
la fibre sp\'eciale $X_{0}=p^{-1}(O)$  est une courbe projective, en g\'en\'eral 
r\'eductible, sur le corps $k$.

 Soit $K$ le corps des fonctions rationnelles de $\mathcal X$. 
Soit  $\Omega=\Omega_{K}$ la famille des valuations discr\`etes de rang 1 sur   $K$.
  Pour $v \in \Omega$, on note $K_{v}$ le hens\'elis\'e de $K$ en $v$
 et $R_{v}$ son anneau des entiers.  
 Tant dans le cas \og semi-global \fg\ que dans le cas \og local \fg, on a la propri\'et\'e suivante 
des groupes de Brauer :

\smallskip

{\it  L'application
$\Br K \to \prod_{v\in \Omega} \Br K_{v}$
est  injective.}

\smallskip

Cette propri\'et\'e, valable quel que soit le corps r\'esiduel $k$,  semble connue de plusieurs auteurs. 
Dans le cas \og semi-global \fg\  et $R$ complet, on peut renvoyer \`a \cite[Thm. 4.3]{CTPaSu}.
Pour $R$ hens\'elien, on peut l'\'etablir dans les deux cas par une 
  combinaison de  \cite[Thm. 1.8]{CTOjPa}
 et \cite[Prop. 1.14]{CTOjPa}, comme expliqu\'e dans le cas \og local \fg\ dans \cite[\S 3]{Hu1}.
 
Ceci donne donc un analogue du th\'eor\`eme de Hasse, Brauer, Noether en th\'eorie du corps de classes.

Les probl\`emes suivants, 
analogues  de probl\`emes   r\'esolus
dans la situation  o\`u $K$ est un corps global et $\Omega$ l'ensemble de ses places (\cite{sansuc}, \cite{borovoi}),
ont fait l'objet d'un certain nombre d'\'etudes, que nous mentionnons plus bas.

\medskip

(1) Soit $G$ un $K$-groupe lin\'eaire lisse.
 L'application naturelle $$H^1(K,G) \to \prod_{v \in \Omega}  H^1(K_{v},G)$$
a-t-elle un noyau trivial ? En d'autres termes, tout espace principal homog\`ene (torseur) sous $G$ qui a des points
dans tous les $K_{v}$  pour $v\in \Omega$ a-t-il un $K$-point ?

(2) Soit $\mu$ un $K$-module galoisien fini. Soit $i\geq 1$ un entier.
L'application 
$$H^{i}(K,\mu)  \to \prod_{v\in \Omega} H^{i}(K_{v},\mu)$$
est-elle injective ?

(3)   Soit $Z$ une $K$-vari\'et\'e projective et lisse, g\'eom\'etriquement connexe,  et qui est  un
espace homog\`ene d'un $K$-groupe lin\'eaire $G$. Si l'on a $Z(K_{v})$ non vide pour tout $v \in \Omega$, a-t-on $Z(K)$ non vide ?

\medskip

Sans hypoth\`ese sur le corps $k$, dans le cas \og semi-global \fg, 
 on peut citer  des travaux de  M. Artin (voir Grothendieck \cite[III, \S 3]{GBBr}),
 des auteurs  et M. Ojanguren \cite{CTOjPa}, de Harbater, Hartmann et Krashen 
  \cite{HHK1,HHK2, HHK3} et des auteurs \cite{CTPaSu}. 
  Dans le cas \og local \fg, on peut citer
   \cite{CTOjPa}  et Y. Hu  \cite{Hu1}.

Lorsque le corps  $k$ est  s\'eparablement clos,  
le corps $K$ est alors un corps de dimension cohomologique 2,
et les probl\`emes ci-dessus sont tr\`es proches des probl\`emes \'etudi\'es
sur les corps globaux.  On consultera le rapport g\'en\'eral \cite{parimala}.
Tant dans le cas \og  semi-global \fg\ que dans le cas \og local  \fg\,
on sait  montrer dans de nombreux cas
que l'on a $H^1(K,G)=0$ (cf. \cite[Thm. 1.2, Thm. 1.4]{CTGiPa})
pour  $G$ semi-simple simplement connexe,  ce qui ram\`ene
  le probl\`eme (1) 
  pour $G$ semi-simple
   au probl\`eme (2)
pour $i=2$.
 Dans le cas \og local \fg,
 on peut citer des travaux de Artin, Ford, Saltman,  
 des auteurs et M. Ojanguren \cite{CTOjPa}, des
 auteurs  et  P. Gille \cite{CTGiPa}.

Lorsque le corps  $k$ est fini, dans le cas \og semi-global \fg,
 le probl\`eme (2) a une longue histoire. On se restreignait classiquement aux valuations
 triviales sur l'anneau~$R$.
Pour $H^2(K,\mu_{n})$,
  l'injectivit\'e est une variante du th\'eor\`eme
de dualit\'e de Tate et Lichtenbaum sur les vari\'et\'es ab\'eliennes sur les
corps $p$-adiques. Pour $H^3(K,\mu_{n}^{\otimes 2})$,
 l'injectivit\'e est  essentiellement un r\'esultat de Kato \cite{kato}.
Lorsque l'on prend toutes les valuations discr\`etes, 
 on conjecture (\cite{CTPaSu}  dans le cas \og semi-global \fg) que pour $G$ semi-simple
simplement connexe, la question (1) a une r\'eponse
affirmative. Ceci a \'et\'e \'etabli dans de nombreux cas
(\cite{CTPaSu}, \cite{Hu3}, \cite{preeti}). La question (3)
fait aussi l'objet d'une conjecture. Pour les quadriques,
des r\'esultats sont obtenus dans
\cite{CTOjPa}, \cite{Hu3}. 
Un certain nombre de ces r\'esultats utilisent l'invariant de Rost pour
se ramener \`a l'\'enonc\'e de Kato sur $H^3(K,\mu_{n}^{\otimes 2})$.
Dans le cas \og local \fg, on peut citer \cite{CTOjPa}, \cite{Hu2}, \cite{Hu3}.

En ce qui concerne la question (2),  
on a donn\'e des exemples (\cite[Rem. 3.1.2]{CTOjPa}, \cite[\S 3.3, Rem. 2]{CTGiPa} dans le cas \og local \fg,  \cite[\S 6]{CTPaSu} dans le cas \og semi-global \fg) pour lesquels l'application
$$H^{1}(K,\Z/2)  \to \prod_{v\in \Omega} H^{1}(K_{v},\Z/2)$$
n'est pas injective -- \`a la diff\'erence de la situation sur un corps de nombres.
Dans ces exemples,   
le graphe des composantes
de la fibre sp\'eciale g\'eom\'etrique n'est pas un arbre : il contient un lacet.
Pour plus de d\'etails, voir  \cite{HHK2}.  
L'\'enonc\'e sur le groupe de Brauer montre que la question (2) pour $i=2$
et $\mu=\mu_{n}$, avec $n\geq 1$ entier inversible sur $\mathcal X$, a une r\'eponse affirmative.
Dans le cas \og semi-global \fg, 
en \'egale caract\'eristique,
Harbater, Hartmann et Krashen \cite{HHK3} viennent d'\'etendre ce r\'esultat \`a tout $H^{i}(K,\mu_{n}^{\otimes (i-1)})$ pour $i \geq 2$.

Les exemples mentionn\'es \`a l'instant montrent qu'il convient de restreindre la question (1) au cas o\`u $G$
est connexe. Supposons que  $k$ est 
alg\'ebriquement clos
de caract\'eristique z\'ero.
Dans le cas \og local \fg,  
on a montr\'e  (\cite[Thm. 5.2 (b)  (ii)]{CTGiPa}, \cite[Thm. 7.9]{BoKu}) que
la r\'eponse  \`a (1) est affirmative si $G$ est un $K$-groupe lin\'eaire {\it connexe $K$-rationnel},
i.e. de corps des
fonctions transcendant pur sur $K$.
Supposons de plus $R$ complet. Dans le cas  \og  semi-global \fg,
Harbater, Hartmann et Krashen \cite[Thm. 8.10]{HHK2} ont   \'etabli   que
si $G$ est un $K$-groupe lin\'eaire {\it connexe $K$-rationnel} , alors la r\'eponse \`a la question (1)  est  affirmative.
Ce r\'esultat est une cons\'equence du th\'eor\`eme \og local \fg\ mentionn\'e ci-dessus et
d'un th\'eor\`eme local-global vis-\`a-vis d'un
autre ensemble de surcorps de $K$, th\'eor\`eme fondamental des m\^emes auteurs
\cite[Thm. 3.7]{HHK1},  valable  pour tout groupe $K$-{\it rationnel} sans aucune
hypoth\`ese sur le corps r\'esiduel $k$.

\medskip

Pour $G/K$ semi-simple simplement connexe, non n\'ecessairement $K$-rationnel,
on a pu dans certains cas utiliser l'invariant de Rost pour donner une r\'eponse
positive \`a la question (1) : voir \cite[\S 5]{CTPaSu}, 
\cite{Hu3}, \cite{preeti}, \cite{HHK3}.

\medskip

{\it Tous ces r\'esultats, analogues d'\'enonc\'es bien connus 
pour les groupes lin\'eaires sur un corps de nombres  \cite{sansuc},
 laissaient
  ouvertes la question (1) 
pour les groupes  lin\'eaires  connexes quelconques, et, d\'ej\`a pour $i=2$, la question (2) sur les
modules galoisiens finis.}

\subsection{Les r\'esultats du pr\'esent article}

{\it Nous montrons que d\'ej\`a dans le cas o\`u le corps r\'esiduel $k$ est alg\'ebriquement clos,
tant dans le cas \og semi-global \fg\  que dans le cas \og local \fg,
il existe des $K$-groupes $G$ connexes pour lequel le principe local-global pour les
espaces principaux homog\`enes, par rapport
aux places dans $\Omega$, est en d\'efaut : la question (1) a en g\'en\'eral une r\'eponse
n\'egative.  }

Dans le cas \og local \fg, ceci r\'epond \`a une question pos\'ee il y a dix ans (\cite[\S3.4, Question]{CTGiPa}).
Dans le cas  \og semi-global \fg, ceci r\'epond \`a une question motiv\'ee par \cite{HHK1} 
et   pos\'ee explicitement dans~\cite{CTPaSu}.

{\it Dans le cas \og semi-global \fg, nous donnons aussi de tels exemples avec $k$ un corps fini,
par exemple avec $K$ le corps des fonctions d'une courbe sur un corps $p$-adique.}

Nous commen\c cons par donner un tel exemple avec $G$  un $K$-tore 
dont le groupe de Brauer non ramifi\'e est non trivial, ce qui implique
en particulier que la $K$-vari\'et\'e $G$ n'est pas $K$-rationnelle.

Des techniques connues permettent alors de donner un exemple de module galoisien fini
$\mu$ tel que l'application
$H^{2}(K,\mu)  \to \prod_{v\in \Omega} H^{2}(K_{v},\mu)$
n'est pas injective (r\'eponse n\'egative \`a la question (2)).
{\it La pr\'esence d'un lacet dans la fibre sp\'eciale joue un r\^ole-cl\'e dans la construction
des exemples.}
Dans le cas \og local   \fg, ceci  r\'esoud une autre 
question pos\'ee il y a dix ans 
 (\cite[{\it ibidem}]{CTGiPa}).

{\sl Mutatis mutandis}, une m\'ethode de  Serre (pour $K$ corps global)
 permet alors  de construire un $K$-groupe $G$
semi-simple connexe (non simplement connexe) pour lequel la fl\`eche diagonale
$H^1(K,G) \to \prod_{v \in \Omega}  H^1(K_{v},G)$
a un noyau non trivial.
 
 Lorsqu'on dispose d'exemples sur un corps $K'$ extension finie d'un corps $K$,
la restriction des scalaires \`a la Weil permet de fabriquer des exemples
sur le corps $K$. On peut en fin de compte donner des exemples,
dans le cas \og  local \fg,  sur  $K=\C((X,Y))$, corps des fractions du corps
des s\'eries formelles $\C[[X,Y]]$ sur le corps des complexes,
et  dans le cas \og semi-global \fg, sur $K=\C((X))(Y)$.

\bigskip

Pour obtenir nos exemples,
nous
 utilisons de fa\c con concr\`ete
 un {\it nouveau type d'obstruction au principe de Hasse
sur le corps des fonctions de sch\'emas r\'eguliers  int\`egres de dimension quelconque}, 
propos\'ee 
 par Colliot-Th\'el\`ene il y a quelques
ann\'ees sur le mod\`ele de l'obstruction de Brauer-Manin sur les corps
de fonctions d'une variable sur un corps fini.

Cette obstruction exploite  les lois de r\'eciprocit\'e  pour la cohomologie galoisienne
des modules finis constants,
aux points ferm\'es de tels sch\'emas, comme fournies par la th\'eorie
de Bloch-Ogus \cite{BO},
d\'evelopp\'ee   par Kato \cite{kato} puis par Jannsen et Saito \cite{JaSa} en situation d'in\'egale caract\'eristique.
La construction g\'en\'erale  et les propri\'et\'es de telles obstructions sont donn\'ees  au \S \ref{blochogusrecip}.

Le cas particulier de cette obstruction utilis\'e pour nos exemples est le suivant.
Etant donn\'es un sch\'ema $\mathcal X$ r\'egulier int\`egre de dimension 2
de corps des fractions $K$, avec $2$ inversible sur $X$,
  une $K$-vari\'et\'e $Z$ projective, lisse, g\'eom\'etriquement int\`egre
  et un \'el\'ement du groupe de cohomologie non ramifi\'ee $H^{2}_{nr}(K(Z)/K,\Z/2)$,
   la combinaison des  lois de r\'eciprocit\'e sur 
   $\mathcal X$ en les diff\'erents points ferm\'es $M$ de  $\mathcal X$  permet de d\'efinir une obstruction \'eventuelle
   \`a la propri\'et\'e suivante :  si $Z$ admet des points dans tous les hens\'elis\'es
   $K_{\gamma}$ pour $\gamma$ courbe int\`egre sur $\mathcal X$,
   alors $Z$ admet un $K$-point.

   Les $K$-tores  
    utilis\'es dans nos exemples initiaux  sont  donn\'es par une \'equation
    $$(X_{1}^2-aY_{1}^2)(X_{2}^2-bY_{2}^2)(X_{3}^2-abY_{3}^2)=1$$
   avec $a,b \in K^{\times}$.  Tout espace homog\`ene 
   principal
   $E$ sous un tel $K$-tore
   est donn\'e par une \'equation
   $$(X_{1}^2-aY_{1}^2)(X_{2}^2-bY_{2}^2)(X_{3}^2-abY_{3}^2)=c$$
   avec $c \in K^{\times}$. On a montr\'e dans \cite{CT12}
   que pour $Z$ une $K$-compactification projective
    et lisse de la $K$-vari\'et\'e $E$, le quotient $\Br Z/ \Br K$
    est d'ordre au plus 2, et on a donn\'e un g\'en\'erateur explicite $A \in {}_{2}\Br Z=H^2_{nr}(K(Z)/K,\Z/2)$.
   Ceci est rappel\'e au \S  \ref{algebre}. Au \S  \ref{evallocaledeA}, nous discutons l'existence  de points
   rationnels de $E$ sur les hens\'elis\'es de $K$, et nous calculons les valeurs prises par $A$
   sur ces points, ce qui nous permet de calculer l'obstruction de r\'eciprocit\'e sup\'erieure associ\'ee \`a $A$
   en un point ferm\'e   du sch\'ema $\mathcal X$. Nous \'etudions aussi le comportement
   de cette obstruction apr\`es \'eclatement d'un point de $\mathcal X$.

\medskip
   
  Nous sommes alors en mesure de construire, au  \S \ref{coronidisloco},  les exemples annonc\'es.

\medskip

Au \S \ref{paradescente}, sous l'hypoth\`ese que le corps $k$
est s\'eparablement clos, nous montrons que si une vari\'et\'e $Z$
du type ci-dessus a des points dans tous les hens\'elis\'es $K_{\gamma}$,
et s'il existe un bon mod\`ele $\mathcal X$ sur lequel la classe $ A \in {}_{2}\Br Z$
ne donne pas d'obstruction de r\'eciprocit\'e sup\'erieure, {\it alors $Z$ poss\`ede 
un $K$-point}.
Nous op\'erons pour ce faire une
 {\it descente d'un nouveau type}
 au-dessus du corps des fonctions de tels sch\'emas.

On peut ainsi  se demander si le r\'esultat obtenu pour les espaces principaux homog\`enes
des $K$-tores du type ci-dessus s'\'etend aux espaces principaux homog\`enes sous un
$K$-tore quelconque.

 \medskip

 \`A la diff\'erence de notre pr\'ec\'edent travail \cite{CTPaSu}, le pr\'esent article n'utilise 
 pas    les travaux de Harbater, Hartmann et Krashen, si ce n'est, au paragraphe \ref{traduction},
 pour traduire nos r\'esultats dans leur cadre.

 \bigskip
 
 Une valuation discr\`ete sur un corps $K$  est ici une valuation discr\`ete de rang~1, et le groupe
 de la valuation est $\Z$.
 
Si une telle valuation discr\`ete est hens\'elienne, et que l'on note $\hat{K}$ le compl\'et\'e de $K$,
pour toute $K$-vari\'et\'e lisse $Z$, les conditions $Z(K)\neq \emptyset$ et $Z(\hat{K})\neq \emptyset$
sont \'equivalentes.

Dans tout cet article, sauf mention du contraire, la cohomologie employ\'ee est la cohomologie
\'etale des sch\'emas, qui se sp\'ecialise \`a la cohomologie galoisienne sur les corps.

Pour $n>0$ un entier et $A$ un groupe ab\'elien, on note  ${}_{n}A$ le sous-groupe form\'e des \'el\'ements
de $A$ annul\'es par $n$.

\section{Complexe de Bloch-Ogus et obstructions de r\'eciprocit\'e}\label{blochogusrecip}

\subsection{Complexe de Bloch-Ogus--Kato}\label{BOK}

 Nous commen\c cons par quelques rappels sur la th\'eorie de Bloch-Ogus \cite{BO},
 en nous limitant au cadre qui nous int\'eresse ici.
 La th\'eorie est bien document\'ee pour les vari\'et\'es lisses sur un corps,
 il est plus d\'elicat de trouver des r\'ef\'erences pour un sch\'ema r\'egulier
 quelconque. Il y a \`a cela deux raisons : 
 on a  besoin de th\'eor\`emes de puret\'e pour la cohomologie \'etale,
  et pour aller plus loin
 on a besoin de conna\^{\i}tre  la conjecture de Gersten.
 La puret\'e a \'et\'e \'etablie par Gabber \cite[Thm. 3.1.1]{surGab}. Mais la conjecture de Gersten
 n'est toujours pas connue dans le cadre r\'egulier quelconque.
 
Soit   $\mathcal X$ un sch\'ema excellent int\`egre de dimension $d$, tel que
pour tout ferm\'e irr\'eductible $F \subset \mathcal X$,
on a la propri\'et\'e ${\dim}(F) = d - {\rm codim}_{\mathcal X}(F)$.

On note $K$ le corps 
de 
fonctions de $\mathcal X$.
On note ${\mathcal X}^{(i)}$ l'ensemble des points de codimension $i$
de $\mathcal X$, et on note $\kappa(x)$ le corps r\'esiduel en un point $x \in {\mathcal X}$.
Soit $n>0$ un entier inversible sur $\mathcal X$.
Pour $i>0$, on note $\mu_{n}^{\otimes i}$ le produit tensoriel
$i$-fois du faisceau \'etale $\mu_{n}$ (racines $n$-i\`emes de $1$) avec lui-m\^{e}me,
on note $\mu_{n}^{0}=\Z/n$ et, pour $i<0$, on d\'efinit $\mu_{n}^{\otimes i}=
\Hom(\mu_{n}^{\otimes (-i)}, \Z/n)$.

Pour tout  $r \in \N$ et tout $i \in \Z$, on a le  {\it complexe} de Bloch-Ogus arithm\'etique $C_{r,i}$  :
$$0 \to  H^{r}(K,\mu_{n}^{\otimes i}) \to \bigoplus_{\gamma \in {\mathcal X}^{(1)} } H^{r-1}(\kappa(\gamma),  \mu_{n}^{\otimes (i-1)})     \to 
\bigoplus_{M \in {\mathcal X}^{(2)} } H^{r-2}(\kappa(M),  \mu_{n}^{\otimes (i-2)})  
\to \cdots $$
o\`u les degr\'es croissent vers la droite, le  groupe $H^{r}(K,\mu_{n}^{\otimes i})$ \'etant plac\'e en degr\'e~0.
Ces complexes sont construits par Kato \cite[\S 1, Prop. 1.7]{kato} (avec une autre graduation)  et \'etudi\'es de fa\c con syst\'ematique
par Jannsen et Saito  \cite{JaSa}.
 Les applications r\'esidus
$$\partial_{\gamma}   : H^{r}(K,\mu_{n}^{\otimes i}) \to H^{r-1}(\kappa(\gamma),  \mu_{n}^{\otimes (i-1)})$$
  pour $\gamma \in  {\mathcal X}^{(1)} $  et
  $$   \partial_{\gamma,M} : H^{r-1}(\kappa(\gamma),  \mu_{n}^{\otimes (i-1)}) \to H^{r-2}(\kappa(M),  \mu_{n}^{\otimes (i-2)})$$
 pour $\gamma \in  {\mathcal X}^{(1)} $ et $M \in   {\mathcal X}^{(2)} $
sont obtenues par normalisation et sommation
\`a partir des r\'esidus classiques sur un anneau de valuation discr\`ete.

Lorsque $\mathcal X$ est un sch\'ema r\'egulier, hypoth\`ese que nous faisons d\'esormais,
sous certaines hypoth\`eses, on peut donner des informations sur
les groupes d'homologie du complexe ci-dessus : voir \cite{kato} et 
\cite{sasa}.

\medskip

Supposons de plus  ${\mathcal X}$ de dimension 2.
Consid\'erons le complexe $C_{2,2}$ :
$$ 0 \to H^2(K,\mu_{n}^{\otimes 2}) \to
 \bigoplus_{\gamma \in {\mathcal X}^{(1)}} \kappa(\gamma)^{\times}/\kappa(\gamma)^{\times n}
\to \bigoplus_{M\in {\mathcal X}^{(2)} } \Z/n \to 0,$$
o\`u les trois termes du milieu sont, de gauche \`a droite, plac\'es en degr\'e 0, 1, 2.

On a $H^2(C_{2,2}) =  CH_{0}({\mathcal X})/n$,
o\`u $CH_{0}(Y)$ d\'esigne le groupe de Chow des cycles de dimension z\'ero sur 
un sch\'ema $Y$.

Sous l'hypoth\`ese que l'on a $\Z/n \oi \mu_{n}$ sur $\mathcal X$, 
le groupe $H^0(C_{2,2})$,
par un r\'esultat de puret\'e de Auslander--Goldman et Grothendieck \cite[II, Prop. 2.3]{GBBr},
est  isomorphe au sous-groupe de  $n$-torsion du groupe de Brauer de $\mathcal X$.

\begin{prop}\label{blochogus}
Soient  $\mathcal X$ un sch\'ema r\'egulier int\`egre de dimension 2, $R$ 
 un anneau  local int\`egre,  hens\'elien, excellent, 
de corps r\'esiduel $k$, et  $p :  {\mathcal X} \to \Spec R$ un morphisme projectif.
Supposons que l'on est dans l'un des cas suivants.

(a) L'anneau $R$ est un anneau de valuation discr\`ete,
 les fibres de $ {\mathcal X} \to \Spec R$
sont de dimension 1, la fibre g\'en\'erique est lisse et g\'eom\'etriquement int\`egre.

(b) L'anneau  $R$ est  de dimension 2, et $p$ est birationnel.

 Soit $K$ le corps des fonctions rationnelles de $\mathcal X$. 
 Soit $n>0$ un entier inversible dans $k$.
 Supposons  donn\'e un isomorphisme $\Z/n \oi \mu_{n}$ sur $\mathcal X$.
 
 \smallskip
 
 (i) On a un complexe naturel 
$$0 \to {}_{n} \Br K \to \bigoplus_{\gamma \in {\mathcal X}^{(1)}} \kappa(\gamma)^{\times}/\kappa(\gamma)^{\times n}
\to \bigoplus_{M \in {\mathcal X}^{(2)}} \Z/n \to 0$$
covariant  en $\mathcal X$ par morphisme propre de $R$-sch\'emas.

\smallskip

(ii) Le complexe est exact en son troisi\`eme terme.

Si $k$ est un corps s\'eparablement clos ou un corps fini, le complexe
est exact en son premier terme.

Si $k$ est s\'eparablement clos et la conjecture de Gersten vaut pour $\mathcal X$
et le faisceau $\mu_{n}$, alors ce complexe est une suite exacte.

Si $k$ est un corps fini et  si la conjecture de Gersten vaut pour $\mathcal X$
et le faisceau $\mu_{n}$, alors ce complexe est exact sauf au terme m\'edian,
o\`u l'homologie est un sous-groupe de $\Z/n$.
 \end{prop}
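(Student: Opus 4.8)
Pour (i), le plan est d'identifier ce complexe au complexe de Bloch--Ogus--Kato $C_{2,2}$ introduit au \S\ref{BOK}, apr\`es avoir remplac\'e $H^2(K,\mu_n^{\otimes 2})$ par $H^2(K,\mu_n)={}_n\Br K$ gr\^ace \`a l'isomorphisme $\Z/n\oi\mu_n$ sur $\mathcal X$, donc sur $K$. L'existence de $C_{2,2}$ et sa covariance par morphisme propre de $R$-sch\'emas se lisent dans les travaux de Kato \cite{kato} et de Jannsen--Saito \cite{JaSa}; l'identification du noyau de la premi\`ere fl\`eche \`a ${}_n\Br\mathcal X$ est le r\'esultat de puret\'e de \cite{GBBr} d\'ej\`a rappel\'e.

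Pour l'exactitude au troisi\`eme terme, il revient au m\^eme de voir $CH_0(\mathcal X)/n=0$, puisque le conoyau de la seconde fl\`eche est ce groupe; on montrerait m\^eme $CH_0(\mathcal X)=0$. Comme $p$ est propre et $R$ local, tout point ferm\'e $M$ de $\mathcal X$ est au-dessus de $O$. L'anneau $\mathcal O_{\mathcal X,M}$ \'etant r\'egulier de dimension $2$, donc factoriel, on choisit un \'el\'ement premier $u\in\mathfrak m_M\setminus\mathfrak m_M^2$ avec $V(u)\not\subset X_0$ (ce qui n'exclut qu'un nombre fini de choix) et on prend pour $\gamma$ l'adh\'erence dans $\mathcal X$ de $V(u)$ : c'est une courbe int\`egre passant par $M$, non contenue dans $X_0$, r\'eguli\`ere en $M$. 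Alors $\gamma\to\Spec R$ est propre et $\gamma\cap X_0$ est fini, de sorte que la normalis\'ee $\tilde\gamma$ de $\gamma$ est le spectre d'un anneau $R'$ fini sur un anneau local hens\'elien de dimension $1$, donc d'un anneau de valuation discr\`ete hens\'elien; son unique point ferm\'e s'envoie sur $M$ avec m\^eme corps r\'esiduel, et le diviseur sur $\gamma$ d'une uniformisante de $R'$ vaut exactement $[M]$. Donc $[M]=0$ dans $CH_0(\mathcal X)$, et ceci pour tout $M$.

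Pour l'exactitude au premier terme lorsque $k$ est s\'eparablement clos ou fini, il faut voir ${}_n\Br\mathcal X=0$ (par puret\'e, le noyau de la premi\`ere fl\`eche est ${}_n\Br\mathcal X$). Le changement de base propre au-dessus de l'anneau hens\'elien local $R$ donne $H^i_{\et}(\mathcal X,\mu_n)\oi H^i_{\et}(X_0,\mu_n)$; comme de plus $\Pic\mathcal X\to\Pic X_0$ est surjectif ($X_0$ \'etant une courbe), la comparaison des suites de Kummer ram\`ene \`a ${}_n\Br X_0=0$, ce qu'on obtient en passant \`a la normalis\'ee de $X_0$ et en invoquant le th\'eor\`eme de Tsen ($k$ s\'eparablement clos), ou le th\'eor\`eme de Lang et la th\'eorie du corps de classes des courbes sur un corps fini ($k$ fini).

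Il reste le contr\^ole de l'homologie m\'ediane, qui est le point d\'elicat. Sous l'hypoth\`ese que la conjecture de Gersten vaut pour $(\mathcal X,\mu_n)$, le th\'eor\`eme de Bloch--Ogus \cite{BO} donne $H^p(C_{2,2})=H^p_{\Zar}(\mathcal X,\H^2(\mu_n^{\otimes 2}))$, et dans la suite spectrale de coniveau $E_2^{p,q}=H^p_{\Zar}(\mathcal X,\H^q(\mu_n^{\otimes 2}))\Rightarrow H^{p+q}_{\et}(\mathcal X,\mu_n^{\otimes 2})$, les annulations $E_2^{p,q}=0$ pour $p>2$ ou $q<p$, jointes \`a $E_2^{2,2}=H^2(C_{2,2})=0$ (\'etape pr\'ec\'edente) et \`a $E_2^{2,1}=0$ (le faisceau $\H^1$ n'ayant de r\'esolution de Gersten qu'en degr\'es $0$ et $1$), \'eliminent toute diff\'erentielle entrant dans ou sortant de $E_r^{1,2}$ pour $r\geq 2$; d'o\`u $H^1(C_{2,2})=E_\infty^{1,2}$ et une suite exacte $0\to H^1(C_{2,2})\to H^3_{\et}(\mathcal X,\mu_n^{\otimes 2})\to H^3_{\nr}(\mathcal X,\mu_n^{\otimes 2})\to 0$. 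Le changement de base propre identifie $H^3_{\et}(\mathcal X,\mu_n^{\otimes 2})$ \`a $H^3_{\et}(X_0,\mu_n^{\otimes 2})$; ce dernier est nul si $k$ est s\'eparablement clos (la dimension cohomologique de la courbe $X_0$ \'etant au plus $2$), d'o\`u l'exactitude du complexe en chacun de ses termes, et la suite de Hochschild--Serre l'identifie, si $k$ est fini, \`a $H^1(k,H^2(\overline X_0,\mu_n^{\otimes 2}))$, dont on d\'eduirait que $H^1(C_{2,2})$ est un sous-groupe de $\Z/n$. C'est cette derni\`ere \'etape -- combinant l'hypoth\`ese de Gersten, l'analyse de la suite spectrale de coniveau en dimension $2$, et le calcul de $H^3_{\et}$ de la fibre sp\'eciale par changement de base propre et cohomologie galoisienne -- qui constitue l'obstacle principal.
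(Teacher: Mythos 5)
Votre démonstration est correcte et suit pour l'essentiel la même route que celle de l'article : identification avec le complexe de Bloch--Ogus--Kato et pureté pour le premier terme, nullité de $CH_{0}(\mathcal X)$ pour le troisième, annulation de $\Br\mathcal X$ (séparablement clos ou fini), puis, sous Gersten, suite spectrale de coniveau donnant $H^1(C_{2,2})\hookrightarrow H^3_{\et}(\mathcal X,\mu_n^{\otimes 2})\simeq H^3_{\et}(X_0,\mu_n^{\otimes 2})$ calculé par changement de base propre. Vous explicitez simplement certains points que l'article délègue à des références (l'argument de $CH_0=0$ par normalisation d'une courbe passant par $M$, la réduction de $\Br\mathcal X$ à $\Br X_0$), sans changer la structure de la preuve.
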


\begin{proof}

Pour le point (i), en particulier pour la fonctorialit\'e covariante par morphisme propre,
nous renvoyons le lecteur \`a \cite{JaSa}. Voir aussi la remarque \ref{foncpropre} ci-dessous.

Comme $R$ est hens\'elien, on a   $CH_{0}({\mathcal X})=0$. Ceci assure la nullit\'e de 
l'homologie du complexe en le terme $ \bigoplus_{M \in {\mathcal X}^{(2)}} \Z/n$.

On a  $\Br {\mathcal X} =0$ si $k$ est s\'eparablement clos (voir \cite[Cor. 1.10]{CTOjPa})
ou si $k$ est fini (voir  \cite[Cor. 1.11]{CTOjPa}).
Ceci assure alors  la nullit\'e de l'homologie du complexe en le terme  $ {}_{n} \Br K$.

Pour $X_{0}/k$ une courbe  sur un corps s\'eparablement clos, on a
  $H^3_{\et}(X_{0},\mu_{n}^{\otimes 2}) =0$ et, si $X_{0}$ est propre et connexe,
   $H^2_{\et}(X_{0},\mu_{n}^{\otimes 2}) =\mu_{n}$.

Pour $X_{0}/k$ une courbe propre g\'eom\'etriquement connexe sur un corps fini $k$ de cl\^oture alg\'ebrique $k_{s}$,
on a  $$H^3_{\et}(X_{0},\mu_{n}^{\otimes 2}) \oi H^1(k,H^2(X_{0}\times_{k}k_{s},\mu_{n}^{\otimes 2}))
= H^1(k,\mu_{n}) = k^{\times}/k^{\times n} \simeq \Z/n,$$
le dernier isomorphisme provenant de l'hypoth\`ese $\Z/n \oi \mu_{n}$.

 Le th\'eor\`eme de changement de base propre donne des isomorphismes
 $H^{r}_{\et}({\mathcal X},\mu_{n}^{\otimes i}) \simeq H^{r}_{\et}(X_{0},\mu_{n}^{\otimes i}) $.
 On a donc  $H^{3}_{\et}({\mathcal X},\mu_{n}^{\otimes 2})=0$ si $k$ est s\'eparablement clos
 et $H^{3}_{\et}({\mathcal X},\mu_{n}^{\otimes 2}) \simeq \Z/n$ si $k$ est fini.

  Sous la conjecture de Gersten pour la cohomologie \'etale \`a coefficients $\mu_{n}$ 
sur le sch\'ema $\mathcal X$, on a   une suite exacte 
\begin{equation} \label{(modulogersten)}
0 \to H^1(C_{2,2}) \to H^3({\mathcal X}, \mu_{n}^{\otimes 2})  \to H^{0}(C_{3,2}) \to H^2(C_{2,2})
\end{equation}\label{sousgersten}
(on note ici $H^{r}(C_{j,i})$ le $r$-i\`eme groupe d'homologie du complexe de groupes ab\'eliens
$C_{j,i}$).  On a en particulier une inclusion $ H^1(C_{2,2}) \hookrightarrow H^3({\mathcal X}, \mu_{n}^{\otimes 2})$,
ce qui ach\`eve la d\'emonstration. 
\end{proof}

\begin{rema}
Si $k$ est un corps fini, il est  vraisemblable que le groupe
$$H^{0}(C_{3,2})  = Ker [H^3(K, \mu_{n}^{\otimes 2}) \to \bigoplus_{x \in {\mathcal X}^{(1)}} H^2(\kappa(x),\mu_{n})]$$
est nul.  Dans le cas (b), 
voir \cite[Prop. 3.8]{CTOjPa} et \cite[Prop. 4.1]{Hu2}.
Dans le cas (a), et pour $R$ complet, c'est un r\'esultat de Kato \cite[Thm. 5.2]{kato}, voir aussi
\cite[Thm. 3.3.6]{HHK3}.
Pour $k$ fini, le groupe de cohomologie m\'edian dans la proposition est alors \'egal \`a $\Z/n$.
\end{rema}

  \begin{rema}\label{complementBO}
  Bloch et Ogus \cite{BO} ont \'etabli la conjecture de Gersten pour  la cohomologie \'etale pour
 les vari\'et\'es lisses sur un corps.
 Le r\'esultat a \'et\'e \'etendu par I. A. Panin \cite{panin} en 2003 aux sch\'emas r\'eguliers contenant un corps,
en utilisant un th\'eor\`eme de Popescu.  

Pour $R$ une $k$-alg\`ebre de corps r\'esiduel $k$ s\'eparablement clos, et $n$ entier non nul dans $k$
et $\mathcal X$ comme ci-dessus, on a donc une suite exacte :
$$0 \to {}_{n} \Br K \to \bigoplus_{\gamma \in {\mathcal X}^{(1)}} \kappa(\gamma)^{\times}/\kappa(\gamma)^{\times n}
\to \bigoplus_{M \in {\mathcal X}^{(2)}} \Z/n \to 0.$$

Cette suite exacte joue un r\^ole-cl\'e au \S \ref{paradescente} ci-dessous.

Elle est analogue \`a la suite fondamentale de la th\'eorie du corps de classes sur un corps
global $K$ (corps de nombres ou corps de fonctions d'une variable sur un corps fini)
$$0 \to \Br K \to \oplus_{v \in \Omega_{K}} \Br K_{v} \to \Q/\Z \to 0.$$

  \end{rema}
  
  \begin{rema}\label{foncpropre}
  Pla\c cons-nous dans le cas \og semi-global \fg. Supposons
  donn\'e un isomorphisme $\Z/n \oi \mu_{n}$
  sur $R$.
 Soit $F$ le corps des fractions de l'anneau de valuation discr\`ete $R$.
    La fonctorialit\'e covariante par morphisme propre du complexe de Bloch-Ogus
    pour le morphisme ${\mathcal X} \to \Spec R$
  donne un diagramme commutatif de complexes
   \[\xymatrix{
 {}_{n}\Br K    \ar[d]  \ar[r]   & 
  \bigoplus_{\gamma \in {\mathcal X}^1} \kappa(\gamma)^{\times}/\kappa(\gamma)^{\times n}  
   \ar[d]     \ar[r]    & 
   \bigoplus_{x\in {\mathcal X}^{(2)} } \Z/n    \ar[d]     \\
0 \ar[r]  
 &  F^{\times}/F^{\times n}  \ar[r]  &   \Z/n
}\]
qu'il est d'ailleurs facile d'\'etablir directement.
Les fl\`eches verticales m\'edianes sont nulles sur
les composantes de $X_{0}$, et sont induites par la norme sur les corps de fonctions des
autres courbes.  Les fl\`eches verticales de droite sont induites
par le degr\'e relatif au corps r\'esiduel $k$.
La commutativit\'e de ce diagramme est facile \`a \'etablir.
Le seul point non trivial est la commutativit\'e du
carr\'e de droite, qui refl\`ete la loi de r\'eciprocit\'e de Weil
sur la fibre g\'en\'erique de ${\mathcal X}/R$.

Le noyau de la surjection $F^{\times}/F^{\times n} \to  \Z/n$
est $R^{\times}/R^{\times n} \oi k^{\times}/k^{\times n}$.
Pour $k$ s\'eparablement clos, ce noyau est donc trivial; 
pour $k$ fini, il est isomorphe \`a $\Z/n$.

 Il est vraisemblable
que l'application depuis  l'homologie m\'ediane du complexe sup\'erieur
dans $k^{\times}/k^{\times n}$ s'identifie \`a la fl\`eche compos\'ee
$$H^1(C_{2,2}) \to H^3_{\et}({\mathcal X}, \mu_{n}^{\otimes 2})  \oi
H^3_{\et}(X_{0},\mu_{n}^{\otimes 2}) \to H^1(k,H^2(X_{0}\times_{k}k_{s},\mu_{n}^{\otimes 2})),$$
o\`u la premi\`ere fl\`eche est comme dans (\ref{sousgersten}), la seconde est obtenue par le th\'eor\`eme
de changement de base propre, et la troisi\`eme vient de la suite spectrale de Leray
pour la $k$-courbe $X_{0}$.

  \end{rema}
  
  \subsection{Obstruction de r\'eciprocit\'e au-dessus d'une courbe}\label{recipweil}
  
 Pour mettre les obstructions du paragraphe suivant en perspective,
 nous rappelons dans cette sous-section une obstruction introduite
  dans un cas particulier dans \cite{CTreel} 
  par analogie avec l'obstruction d\'efinie par Manin  \cite{Manin}
  sur un corps de nombres.
  Avec les notations ci-dessous, le cas particulier
  o\`u $r=2$, $i=1$ et $F=\F$ est un corps  fini
  correspond \`a l'obstruction de Brauer-Manin sur le corps global
  $\F(X)$, du moins pour la torsion premi\`ere \`a la caract\'eristique.
   En degr\'e cohomologique sup\'erieur, sur un corps de fonctions d'une variable
    sur $F$  le corps   des r\'eels, resp.  sur $F$  un corps $p$-adique,
  cette obstruction a \'et\'e utilis\'ee  par Ducros
  \cite{Ducros}, resp.   par Harari et Szamuely \cite{HaSz}.  
 Ils  ont montr\'e que pour certaines classes de vari\'et\'es, les obstructions ainsi
  d\'efinies \`a l'existence de points rationnels sont les seules.

 \medskip
  
Soit $F$ un corps. Soit $X$ une $F$-courbe g\'eom\'etriquement int\`egre, projective et lisse.
 Soit $K=F(X)$. Soit $n$ un entier premier \`a la caract\'eristique.
 Pour $x \in X^{(1)}$ on note $K_{x}$ le corps des fractions du hens\'elis\'e $O^h_{X,x}$ de l'anneau local $O_{X,x}$   et $F(x)$ le corps r\'esiduel.
 
 Soit $i \in \Z$.  Nous prenons ici  $\mu_{n}^{\otimes i}$ comme coefficients. On pourrait prendre
 tout module galoisien fini sur $F$ d'ordre premier \`a la caract\'eristique de $F$.
 
 On a la longue suite exacte   de localisation en cohomologie \'etale :
 $$H^r(X, \mu_{n}^{\otimes i})  \to  H^{r}(F(X), \mu_{n}^{\otimes i}) \to \oplus_{x \in X^{(1)}} 
  H^{r-1} (F(x), \mu_{n}^{\otimes (i-1)})
  \to    H^{r+1}(X, \mu_{n}^{\otimes i}   ) \to  $$
  pour laquelle nous renvoyons au rapport \cite[\S 3]{CTBarbara}.

 L'application  $\partial_{x} : H^{r}(F(X), \mu_{n}^{\otimes i})  \to H^{r-1} (F(x), \mu_{n}^{\otimes ( i-1)})$ est le r\'esidu en $x$
 (voir \cite[\S 6.8]{GiSz}).
 
On dispose des applications de corestriction
$$\Cores_{F(x)/F} : H^{r-1} (F(x), \mu_{n}^{\otimes (i-1)}) \to H^{r-1}(F,\mu_{n}^{\otimes (i-1)}).$$
 On a la loi de r\'eciprocit\'e de Weil g\'en\'eralis\'ee : la suite suivante  est un complexe
 $$H^{r}(F(X), \mu_{n}^{\otimes i})
  \to \oplus_{x \in X^{(1)}  }H^{r-1} (F(x), \mu_{n}^{\otimes  (i-1)}) 
  \to H^{r-1}(F,\mu_{n}^{\otimes  (i-1)}).
 $$
Voir \`a ce sujet
 \cite[Annexe, \S 3]{serreCG},  
  \cite[\S 6.9]{GiSz}.

 Soit $Z$ une $K$-vari\'et\'e int\`egre projective et lisse. Nous renvoyons \`a \cite[\S 4]{CTBarbara} pour
 la d\'efinition et les propri\'et\'es de base de la cohomologie non ramifi\'ee.
 
  D'apr\`es la  conjecture de Gersten pour la cohomologie \'etale,
 \'etablie par Bloch et Ogus  \cite{BO},
 en tout point  $P $ de la $K$-vari\'et\'e lisse $Z$ 
 une classe dans  $H^{r}_{nr}(K(Z)/K, \mu_{n}^{\otimes i})$
 est l'image d'un (unique) \'el\'ement de $H^r(O_{Z,P},\mu_{n}^{\otimes i})$.
Pour tout corps $L$ contenant $K$, ceci d\'efinit un accouplement
$$H^{r}_{nr}(K(Z)/K, \mu_{n}^{\otimes i})  \times Z(L) \to H^r(L,\mu_{n}^{\otimes i})$$
$$ (\alpha,P) \mapsto \alpha(P).$$
Dans la situation ci-dessus, pour chaque $x \in X^{(1)}$, en utilisant le r\'esidu en $x$,
on obtient un accouplement
 $$H^{r}_{nr}(K(Z)/K, \mu_{n}^{\otimes i})  \times Z(K_{x}) \to  H^{r-1} (F(x), \mu_{n}^{\otimes  (i-1)})$$
 $$ (\alpha, P_{x}) \mapsto \partial_{x}(\alpha(P_{x})).$$

 \begin{prop}\label{weilrecip}
(i)  L'accouplement
 $$ H^{r}_{nr}(K(Z)/K, \mu_{n}^{\otimes i}) \times \prod_{x \in X^{(1)}} Z(K_{x}) \to H^{r-1}(F,\mu_{n}^{\otimes  (i-1)}) $$
  $$ (\alpha, \{P_{x}\}) \mapsto  \sum_{x \in X^{(1)}} \Cores_{F(x)/F}(\partial_{x}(\alpha(P_{x})))$$
  est bien d\'efini.

(ii) L'accouplement est nul lorsque $\{P_{x}\}$ est dans 
  l'image diagonale de $Z(K)$ dans $  \prod_{x \in X^{(1)}} Z(K_{x})$.
 \end{prop}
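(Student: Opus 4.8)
The plan is to reduce both assertions to the generalized Weil reciprocity law recalled above, namely that for every class $\beta \in H^{r}(F(X),\mu_{n}^{\otimes i})$ the sum $\sum_{x\in X^{(1)}}\Cores_{F(x)/F}(\partial_{x}(\beta))$ vanishes in $H^{r-1}(F,\mu_{n}^{\otimes(i-1)})$.

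For part (i), the content is that the pairing is well defined, i.e. that the sum is finite and independent of the choices. First I would note that for a fixed family $\{P_{x}\}$ the sum is finite: by definition of non-ramified cohomology, the class $\alpha$ extends to a class on the generic fibre of a model $\mathcal{U}\to U$ over a dense open $U\subset X$, so for all but finitely many $x\in X^{(1)}$ the point $P_{x}$ reduces to a point of the good reduction, $\alpha(P_{x})$ lies in the image of $H^{r}(O_{X,x},\mu_{n}^{\otimes i})$, hence is unramified at $x$ and $\partial_{x}(\alpha(P_{x}))=0$. There is no further "choice" to speak of once $\alpha$ and $\{P_{x}\}$ are fixed: the evaluation $\alpha(P_{x})\in H^{r}(K_{x},\mu_{n}^{\otimes i})$ is obtained by pulling back the unique class in $H^{r}(O_{Z,P_{x}},\mu_{n}^{\otimes i})$ along $\Spec K_{x}\to Z$ guaranteed by Gersten (Bloch--Ogus), and $\partial_{x}$ and $\Cores_{F(x)/F}$ are canonical. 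So part (i) amounts only to this finiteness remark.

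For part (ii), suppose $\{P_{x}\}$ comes from a single $K$-point $P\in Z(K)$. Set $\beta := \alpha(P)\in H^{r}(K,\mu_{n}^{\otimes i})=H^{r}(F(X),\mu_{n}^{\otimes i})$, the evaluation of $\alpha$ at $P$ over the field $K=F(X)$. The key compatibility is that evaluation commutes with the base change $K\hookrightarrow K_{x}$: the image of $P$ in $Z(K_{x})$ is exactly $P_{x}$, and since $\alpha$ is represented by the same class in $H^{r}(O_{Z,P},\mu_{n}^{\otimes i})$ whether we work over $K$ or over $K_{x}$ (functoriality of the Gersten resolution under the flat map $\Spec O_{X,x}^{h}\to \Spec O_{X,x}$, using purity), we get $\alpha(P_{x}) = \mathrm{res}_{K_{x}/K}(\beta)$ in $H^{r}(K_{x},\mu_{n}^{\otimes i})$. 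Now the residue $\partial_{x}$ of the restriction of a global class $\beta\in H^{r}(F(X),\mu_{n}^{\otimes i})$ to $K_{x}$ equals the residue $\partial_{x}(\beta)$ computed globally (compatibility of residues with henselization). Hence $\sum_{x}\Cores_{F(x)/F}(\partial_{x}(\alpha(P_{x}))) = \sum_{x}\Cores_{F(x)/F}(\partial_{x}(\beta))$, which is $0$ by the generalized Weil reciprocity law quoted above.

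The main obstacle, and the only point requiring care, is the compatibility in the previous paragraph: that evaluating the unramified class $\alpha$ at $P_{x}\in Z(K_{x})$ yields precisely the restriction to $K_{x}$ of its evaluation at $P\in Z(K)$. This is a matter of the functoriality of the Bloch--Ogus/Gersten resolution: one checks that the henselization square $\Spec O_{X,x}^{h}\to\Spec O_{X,x}$ pulls back the local representative of $\alpha$ at $P$ to the local representative at $P_{x}$, using Gabber's purity (quoted earlier) so that the relevant cohomology of regular local rings behaves well, and then that the residue map of the excellent DVR $O_{X,x}^{h}$ agrees with the normalized residue appearing in the Bloch--Ogus complex of $X$. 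Once this is granted the rest is formal. I would present (ii) first and deduce the finiteness statement of (i) from the same spreading-out argument, since it is needed in both parts.
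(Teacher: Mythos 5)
Votre démonstration est correcte et suit essentiellement la même voie que celle de l'article : pour (i), l'argument d'étalement sur un modèle projectif et lisse $\mathcal Z \to U$ et le fait que $\alpha(P_x)$ provient de $H^r(O^h_{X,x},\mu_n^{\otimes i})$ pour $x\in U$, donc a un résidu nul ; pour (ii), la réduction à la loi de réciprocité de Weil généralisée appliquée à $\beta=\alpha(P)$. Vous explicitez simplement un peu plus la compatibilité entre l'évaluation sur $K$ et sur $K_x$, que l'article laisse implicite.
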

 
\begin{proof}
  Pour  \'etablir (i), il suffit de montrer que
pour presque tout $x \in X^{(1)}$, l'application 
$$Z(K_{x}) \to H^{r-1} (F(x), \mu_{n}^{\otimes  (i-1)})$$
$$ P \mapsto \partial_{x}(\alpha(P))$$
d\'efinie par $\alpha$ est nulle.
Il existe un ouvert non vide $U \subset X$ et
 ${\mathcal Z}  \to U$ un mod\`ele projectif et lisse de $Z \to \Spec K$.
 Une classe dans $H^{r}(F({\mathcal Z}), \mu_{n}^{\otimes i})$ n'a qu'un nombre fini
 de r\'esidus non nuls sur la $F$-vari\'et\'e lisse ${\mathcal Z}$.
 Comme par hypoth\`ese $\alpha$ a tous ses r\'esidus nuls sur $Z$,
 quitte \`a restreindre $U$ on peut supposer que $\alpha$ a tous ses
 r\'esidus nuls sur $\mathcal Z$.
 Soit alors $x \in U$ et $P \in Z(K_{x})$. 
 On a $Z(K_{x})={\mathcal Z}(O^h_{X,x})$.  Le point $P$
 d\'efinit donc un morphisme de $\Spec O^h_{X,x}$ dans $\mathcal Z$.
 Soit $N \in  \mathcal Z$ l'image du point ferm\'e de $O^h_{X,x}$.
 D'apr\`es Bloch-Ogus (la conjecture de Gersten) appliqu\'e \`a la $k$-vari\'et\'e lisse $\mathcal Z$,
 la classe $\alpha$ est image d'un \'el\'ement bien d\'efini de l'anneau
 local $H^{r}(O_{{\mathcal Z},N},   \mu_{n}^{\otimes i})$.
 On voit alors que $\alpha(P) $ appartient \`a $H^{r}(O^h_{X,x}, \mu_{n}^{\otimes i})$,
 et a donc un r\'esidu nul dans $H^{r-1} (F(x), \mu_{n}^{\otimes  (i-1)}) $.
 Ceci montre que l'accouplement
 $$ H^{r}_{nr}(K(Z)/K, \mu_{n}^{\otimes i}) \times \prod_{x \in X^{(1)}} Z(K_{x}) \to H^{r-1}(F,\mu_{n}^{\otimes  (i-1)}) $$
 $$ (\alpha, \{P_{x}\} ) \mapsto  \sum_{x \in X^{(1)}}\Cores_{F(x)/F}(\partial_{x}(\alpha(P_{x})))$$
est bien d\'efini. Ceci \'etablit le point (i). Le point (ii) est une cons\'equence
de la loi de r\'eciprocit\'e de Weil sur la courbe $X$.
\end{proof}

 \begin{ex}
On ne saurait esp\'erer, m\^eme pour les vari\'et\'es les plus simples,
que cette obstruction cohomologique suffise \`a d\'ecider de l'existence d'un point.
L'exemple suivant est une variante simple de celui donn\'e dans \cite[Rem. 7.9]{CTPaSu}.
Soit $X/\C((t))$ la courbe elliptique donn\'ee par l'\'equation affine
$$\rho^2=\sigma(\sigma+1)(\sigma-t).$$
Soit  $K$ son corps des fonctions.
Soit $Z/K$ la conique d'\'equation projective
$$U^2-\sigma V^2-tW^2=0.$$
Au voisinage de tout point  ferm\'e $x \in X$, on peut \'ecrire $\sigma \in K$ comme le produit d'une unit\'e en $x$ et
d'un carr\'e. Comme le corps r\'esiduel de tout tel point est un corps $C_{1}$,
ceci implique $Z(K_{x}) \neq \emptyset$ pour tout tel point $x$.
Comme $Z$ est une conique, les applications $H^{i}(K,\Z/n) \to H^{i}_{nr}(K(Z)/K, \Z/n)$
sont toutes surjectives. Il n'y a donc pas d'obstruction de r\'eciprocit\'e donn\'ee par la proposition
\ref{weilrecip}.

La courbe elliptique $X$ admet une r\'eduction modulo $t=0$ d'\'equation affine
$\rho^2=\sigma^2(\sigma+1)$ sur $\C$, qui est birationnelle \`a $\tau^2=\sigma+1$ et donc \`a $\Spec \C[\tau]$
 via $\rho=\tau.\sigma$.
La conique $Z$  n'a pas de point dans le hens\'elis\'e $K_{t}$
de $K$ le long de $t=0$, car le r\'esidu de l'alg\`ebre de quaternions
$(\sigma,t)$ est donn\'e par $\sigma=\tau^2-1 \in \C(\tau)^{\times}/\C(\tau)^{\times 2}$.

Ceci implique $Z(K)=\emptyset$. 
Comme $Z$ est une $K$-compactification lisse 
d'un espace principal homog\`ene sous un $K$-tore,
ceci donne aussi un exemple du m\^eme type pour de telles vari\'et\'es.
Cela donne aussi un exemple o\`u l'application
$\Br K \to \prod_{x\in X^{(1)} } \Br K_{x}$
n'est pas injective.
 \end{ex}

\subsection{Au-dessus d'une base
de dimension quelconque : 
de nouvelles obstructions de r\'eciprocit\'e}

Les obstructions suivantes \`a l'existence de points rationnels 
 ont  \'et\'e propos\'ees il y a quelques ann\'ees
par Colliot-Th\'el\`ene.
Faute d'application, elles n'avaient pas \'et\'e publi\'ees. Le pr\'esent article les met pour
la premi\`ere fois en {\oe}uvre au-dessus d'une base de dimension plus grande que 1.

\begin{prop}\label{obstrucsup}
Soit ${\mathcal X}$ un sch\'ema r\'egulier excellent int\`egre de dimension $d$, tel que
pour tout ferm\'e irr\'eductible ${\mathcal F} \subset  {\mathcal X}$,
on a la propri\'et\'e ${\dim}({\mathcal F}) = d - {\rm codim}_{\mathcal X}({\mathcal F})$.
Soit $K$ le  corps   de
   fonctions de $\mathcal X$. Soit $n \in \N$
un entier inversible sur ${\mathcal X}$ et soient $r\in \N$ et $i\in \Z$.
Soit $Z$ une $K$-vari\'et\'e projective, lisse, g\'eom\'etriquement int\`egre.

 (i) Pour tout point $\gamma$ de codimension 1 de ${\mathcal X}$, on dispose d'une \'evaluation
  $$ H^{r}_{nr}(K(Z)/K, \mu_{n}^{\otimes i})  \times   Z(K_{\gamma})    
  \to H^{r}(K_{\gamma}, \mu_{n}^{\otimes i}) $$
  $$(\alpha,P) \to \alpha(P)$$
qui par application du r\'esidu $\partial_{\gamma}$ induit un accouplement
   $$H^{r}_{nr}(K(Z)/K, \mu_{n}^{\otimes i})  \times   Z(K_{\gamma})  \to  
  H^{r-1}(\kappa(\gamma),\mu_{n}^{\otimes ( i-1)}).$$
  
  (ii) Supposons que ${\mathcal X}$ est un sch\'ema au-dessus d'un corps.
   Pour tout $\alpha \in  H^{r}_{nr}(K(Z)/K, \mu_{n}^{\otimes i})$,
  il existe un ouvert $U$ d\'ependant 
  de $\alpha$ tel que
pour tout $\gamma \in {\mathcal X}^{(1)}$ appartenant \`a $U$  l'application
  $$Z(K_{\gamma}) \to H^{r-1}(\kappa(\gamma),\mu_{n}^{\otimes ( i-1)})$$
  d\'efinie par $\alpha$ a son image nulle.
  
  (iii) Supposons que ${\mathcal X}$ est un sch\'ema au-dessus d'un corps.  On dispose d'un accouplement
  bien d\'efini
  $$ H^{r}_{nr}(K(Z)/K, \mu_{n}^{\otimes i}) \times \prod_{  \gamma \in {\mathcal X}^{(1)}   }     Z(K_{\gamma}) \to
  \bigoplus_{M \in {\mathcal X}^{(2)}  }H^{r-2}(\kappa(M),\mu_{n}^{\otimes ( i-2)})$$
  $$ (\alpha, \{P_{\gamma}\} ) \mapsto  \sum_{\gamma} \partial_{\gamma,M}(\partial_{\gamma}(\alpha(P_{\gamma}))) $$
 et l'accouplement est nul lorsque $\{P_{\gamma}\}$ est dans 
  l'image diagonale de $Z(K)$ dans $  \prod_{\gamma \in {\mathcal X}^{(1)}} Z(K_{\gamma})$.

  (iv) Si  $r=1$, ou si $r=2$ et $ i=1$, les \'enonc\'es valent sans supposer que ${\mathcal X}$ est un sch\'ema au-dessus d'un corps.
  \end{prop}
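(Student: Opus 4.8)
The plan is to derive (i) at once from the evaluation pairing on $L$-points recorded in \S\ref{recipweil}, to prove (ii) by the spreading-out argument already used for Proposition \ref{weilrecip}(i), to assemble (iii) from (ii) together with the fact that the Bloch--Ogus--Kato complex $C_{r,i}$ of \S\ref{BOK} is a complex, and to obtain (iv) by examining precisely which Gersten-type input the proof of (ii) consumes. For (i), the henselian local ring $\mathcal O^h_{\mathcal X,\gamma}$ is a discrete valuation ring with fraction field $K_\gamma$, residue field $\kappa(\gamma)$, and $n$ invertible on it, so there is a residue map $\partial_\gamma\colon H^r(K_\gamma,\mu_n^{\otimes i})\to H^{r-1}(\kappa(\gamma),\mu_n^{\otimes(i-1)})$; composing it with the evaluation $H^r_{nr}(K(Z)/K,\mu_n^{\otimes i})\times Z(K_\gamma)\to H^r(K_\gamma,\mu_n^{\otimes i})$ of \S\ref{recipweil} applied to the field $L=K_\gamma\supseteq K$ (this pairing exists because $Z$ is smooth over the field $K$, where the Gersten conjecture holds by \cite{BO}) gives the asserted map, and there is nothing more to check.

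For (ii), fix $\alpha$. By spreading out, choose a dense open $U\subseteq\mathcal X$ and a smooth projective morphism $\mathcal Z\to U$ with generic fibre $Z$; since $\mathcal X$ lies over a field, $\mathcal Z$ is a regular scheme containing a field, so the Gersten conjecture for $\mu_n^{\otimes i}$-cohomology holds on $\mathcal Z$ by Bloch--Ogus \cite{BO} and Panin \cite{panin} (see Remarque \ref{complementBO}). A class in $H^r(K(\mathcal Z),\mu_n^{\otimes i})$ has only finitely many nonzero residues on $\mathcal Z$, and by hypothesis $\alpha$ has all residues zero along the generic fibre, so after shrinking $U$ we may assume $\alpha$ is unramified at every point of $\mathcal Z$ and hence, by Gersten, lies in $H^r(\mathcal O_{\mathcal Z,N},\mu_n^{\otimes i})$ for all $N\in\mathcal Z$. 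For $\gamma\in U$ and $P\in Z(K_\gamma)=\mathcal Z(\mathcal O^h_{\mathcal X,\gamma})$ (valuative criterion of properness), let $N$ be the image of the closed point of $\Spec\mathcal O^h_{\mathcal X,\gamma}$; then $P$ induces a local homomorphism $\mathcal O_{\mathcal Z,N}\to\mathcal O^h_{\mathcal X,\gamma}$, whence $\alpha(P)$ comes from $H^r(\mathcal O^h_{\mathcal X,\gamma},\mu_n^{\otimes i})$ and $\partial_\gamma(\alpha(P))=0$. This is the argument of Proposition \ref{weilrecip}(i) with the curve $X$ replaced by $\mathcal X$.

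For (iii), well-definedness follows from (ii): for fixed $\alpha$ and $\{P_\gamma\}$, the classes $\beta_\gamma:=\partial_\gamma(\alpha(P_\gamma))$ vanish for all but finitely many $\gamma$, and each nonzero $\beta_\gamma$ has only finitely many nonzero images $\partial_{\gamma,M}(\beta_\gamma)$, so $\bigl(\sum_\gamma\partial_{\gamma,M}(\beta_\gamma)\bigr)_M$ is a well-defined element of $\bigoplus_{M\in\mathcal X^{(2)}}H^{r-2}(\kappa(M),\mu_n^{\otimes(i-2)})$. For vanishing on the diagonal, if each $P_\gamma$ is the image of a fixed $P_0\in Z(K)$, then $\alpha(P_\gamma)$ is the restriction of $\xi:=\alpha(P_0)\in H^r(K,\mu_n^{\otimes i})$; since the henselian residue $\partial_\gamma$ agrees on such classes with the residue of $C_{r,i}$, the quantity $\sum_\gamma\partial_{\gamma,M}(\partial_\gamma(\alpha(P_\gamma)))$ is exactly the image of $\xi$ under two successive differentials of $C_{r,i}$, which is zero because $C_{r,i}$ is a complex (Kato \cite{kato}, Jannsen--Saito \cite{JaSa}).

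Finally, (iv): the hypothesis that $\mathcal X$ lies over a field entered only in (ii), to invoke the Gersten conjecture in degree $r$ for $\mu_n^{\otimes i}$ on the regular scheme $\mathcal Z$. For $r=1$ the relevant statement concerns only units and Picard groups of regular local rings (Zariski--Nagata purity of the branch locus) and holds on any regular scheme, and for $r=2$, $i=1$ it is the Gersten resolution of the Brauer group, which holds on regular schemes by the purity theorems of Auslander--Goldman--Grothendieck \cite{GBBr} and Gabber \cite{surGab}; in either case (ii), and with it (iii), goes through with no hypothesis on the base. I expect this to be the only delicate point: the whole proposition is formal once the Bloch--Ogus--Gersten machinery is available, but the spreading-out step genuinely needs the Gersten conjecture on the model $\mathcal Z$, which in the required generality is at present known only for equicharacteristic regular schemes (via Popescu's approximation theorem), and checking that the double sum in (iii) lies in the direct sum rather than the product is a piece of finiteness bookkeeping entirely governed by (ii).
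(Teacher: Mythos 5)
Your proposal is correct and follows essentially the same route as the paper: (i) from Bloch--Ogus for the smooth $K$-variety $Z$, (ii) by spreading out to a smooth projective model $\mathcal Z \to U$ and invoking Gersten on $\mathcal Z$ via Bloch--Ogus and Panin, (iii) by combining (ii) with the compatibility of $\alpha(P_0)$ with its local evaluations and the fact that $C_{r,i}$ is a complex, and (iv) by replacing the Gersten input with classical purity for $r=1$ and Gabber's absolute purity for $r=2$, $i=1$. No gaps.
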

  
\begin{proof}
   L'\'enonc\'e (i) r\'esulte de la th\'eorie de Bloch-Ogus pour la vari\'et\'e lisse $Z$ sur le corps $K$.
  
    Il existe un ouvert non vide $U \subset {\mathcal X}$ et un
  mod\`ele projectif et lisse ${\mathcal Z} \to U$ de $Z/K$. Pour
  $\alpha \in H^{r}_{nr}(K(Z)/K, \mu_{n}^{\otimes i}) $ donn\'e, apr\`es restriction de $U$
on peut assurer que  $\alpha$ a tous ses r\'esidus trivaux sur $\mathcal Z$ .
Soit $P_{\gamma} \in Z(K_{\gamma})$. 
Comme on a  $Z(K_{\gamma})={\mathcal Z}(O^h_{Z,\gamma})$, le point $P_{\gamma} $
s'\'etend en un morphisme $\Spec O^h_{Z,\gamma}  \to {\mathcal Z}$. Soit $N$
l'image du point ferm\'e de  $\Spec  O^h_{Z,\gamma}$. Par la th\'eorie de Bloch-Ogus 
sur le sch\'ema r\'egulier ${\mathcal Z}$ (on utilise ici \cite{panin}),
la classe
$\alpha$ provient d'une classe dans $H^{r}(O_{{\mathcal Z},N},\mu_{n}^{\otimes i})$.
On a donc $\alpha(P_{\gamma}) \in H^{r}( O^h_{Z,\gamma},\mu_{n}^{\otimes i})$ et donc
$\partial_{\gamma}(\alpha(P_{\gamma}))=0$. Ceci \'etablit (ii) et donc le fait que l'accouplement en (iii)
est bien d\'efini.

Par la conjecture de Gersten \cite{BO}  pour la $K$-vari\'et\'e $Z$, 
 pour tout $\alpha \in H^{r}_{nr}(K(Z)/K, \mu_{n}^{\otimes i})$
et tout $P \in Z(K)$, la classe $\alpha$ est repr\'esent\'ee par un unique  \'el\'ement
de $H^{r}(O_{Z,P},  \mu_{n}^{\otimes i})$, qui d\'efinit une classe $\alpha(P) \in H^{r}(K, \mu_{n}^{\otimes i})$,
dont l'image dans   $ H^{r}(K_{\gamma}, \mu_{n}^{\otimes i})$
co\"{\i}ncide avec sa valeur calcul\'ee sur $K_{\gamma}$. La fin de l'\'enonc\'e (iii) r\'esulte alors
du fait que les applications $\partial_{\gamma}$ et    $\partial_{\gamma,M}$ d\'efinissent un complexe
$$  H^{r}(K, \mu_{n}^{\otimes i}) \to \bigoplus_{\gamma \in {\mathcal X}^{(1)} }
 H^{r-1}(\kappa(\gamma), \mu_{n}^{\otimes (i-1)}) \to
\bigoplus_{M  \in {\mathcal X}^{(2)} } H^{r-2}(\kappa(M),  \mu_{n}^{\otimes (i-2)})   $$
 comme rappel\'e
au d\'ebut du paragraphe \ref{BOK}.

L'\'enonc\'e (iv) r\'esulte du fait que dans ces deux cas,
pour tout sch\'ema r\'egulier  $\mathcal Z$
  de corps de
  fonctions $F$,
   si une classe dans $H^{r}(F,\mu_{n}^{\otimes i})$
  a tous ses r\'esidus triviaux sur $\mathcal Z$, alors elle provient de $H^{r}_{\et}({\mathcal Z},\mu_{n}^{\otimes i})$.
  Le cas $r=1$ est classique, le cas $r=2$ est une cons\'equence  du th\'eor\`eme de puret\'e absolue de Gabber \cite[Thm. 3.1.1]{surGab}. 
\end{proof}

\begin{rema}  Sous les hypoth\`eses de la proposition \ref{obstrucsup},  on voit donc qu'une condition n\'ecessaire
d'existence d'un $K$-point sur $Z$ est l'existence d'une famille $\{P_{\gamma}\} \in \prod_{\gamma  \in {\mathcal X}^{(1)}}Z(K_{\gamma})$
telle que, pour tout $i$, tout  $r$, tout $n$, tout $\alpha \in H^{r}_{nr}(K(Z)/K, \mu_{n}^{\otimes i})$,
et tout $M \in {\mathcal X}^{(2)}$, on ait
$$ \sum_{\gamma} \partial_{\gamma,M}(\partial_{\gamma}(\alpha(P_{\gamma})))  = 0 \in   H^{r-2}(\kappa(M),  \mu_{n}^{\otimes (i-2)}),$$
o\`u la somme porte sur les $\gamma  \in {\mathcal X}^{(1)}$ dont l'adh\'erence contient $M$.

\end{rema}

\begin{rema}
Soit ${\mathcal Y}  \to  {\mathcal X}$ un morphisme propre birationnel de 
sch\'emas excellents r\'eguliers
int\`egres satisfaisant l'hypoth\`ese sur les dimensions faite au d\'ebut de la proposition \ref{obstrucsup}.
Soit $K$ le corps de fonctions 
de ${\mathcal X}$.
Soit $Z$ une $K$-vari\'et\'e projective, lisse, g\'eom\'etriquement int\`egre.
Soit $\alpha \in  H^{r}_{nr}(K(Z)/K, \mu_{n}^{\otimes i})$. Sous les hypoth\`eses de la proposition \ref{obstrucsup},
s'il existe une famille $\{P_{\gamma}\}_{\gamma \in {\mathcal Y}^{(1)}}$ telle que pour tout $M \in {\mathcal Y}^{(2)}$
on ait
$$\sum_{\gamma} \partial_{\gamma,M}(\partial_{\gamma}(\alpha(P_{\gamma}))) =0, $$
alors la famille  $\{Q_{\zeta}\}_{\zeta \in {\mathcal X}^{(1)}}$  d\'efinie par $Q_{\zeta}=P_{\gamma}$,
o\`u $\gamma \in {\mathcal Y}$ d\'esigne   l'unique point de codimension 1 de $\mathcal Y$ au-dessus de $\zeta$
satisfait, pour tout $N \in {\mathcal X}^{(2)}$
$$\sum_{\zeta} \partial_{\zeta,N}(\partial_{\zeta}(\alpha(Q_{\zeta}))) =0 \in H^{r-2}(\kappa(N),  \mu_{n}^{\otimes (i-2)}).$$
Ceci r\'esulte de la  la fonctorialit\'e covariante par morphisme propre 
du complexe de Bloch-Ogus \cite[\S 2]{JaSa}.

On peut se demander s'il existe un \'enonc\'e permettant de remonter de propri\'et\'es
sur $\mathcal X$ \`a des propri\'et\'es sur $\mathcal Y$. On en verra un exemple
  tr\`es particulier 
\`a la section \ref{eclatement}.

\end{rema}

  \begin{prop}\label{descentealgebre}
  Soient  $k$ un corps s\'eparablement clos, $R$ une $k$-alg\`ebre
  locale int\`egre,  hens\'elienne, excellente, 
de corps r\'esiduel $k$,  de corps des fractions $F$,
 puis $ {\mathcal X}$ un sch\'ema r\'egulier int\`egre de dimension 2
 \'equip\'e d'un morphisme projectif
  $ p : {\mathcal X} \to \Spec R$.
  Supposons que l'on est dans l'un des cas suivants.

(a) L'anneau $R$ est un anneau de valuation discr\`ete,
 les fibres de $p$
sont de dimension 1, la fibre g\'en\'erique est lisse et g\'eom\'etriquement int\`egre.

(b) L'anneau  $R$ est  de dimension 2, et $p$ est birationnel.

 Soit $K$ le corps des fonctions rationnelles de $\mathcal X$. 
 Soit $n>0$ un entier inversible dans $k$.
  Soit  $Z$ une $K$-vari\'et\'e projective, lisse, g\'eom\'etriquement int\`egre,
 et soit $A \in {}_{n}\Br Z$.  Fixons un isomorphisme $\Z/n \oi \mu_{n}$.
 
Supposons qu'il existe une famille $\{P_{\gamma}\}_{\gamma \in {\mathcal X}^{(1)}}$,
 telle que la famille  $\{ \partial_{\gamma}(A(P_{\gamma})) \}_{\gamma \in {\mathcal X}^{(1)}}$ soit dans le noyau de la fl\`eche
 $ \oplus_{\gamma \in {\mathcal X}^{(1)}} k(\gamma)^{\times}/k(\gamma)^{\times n}
\to \oplus_{M \in {\mathcal X}^{(2)}} \Z/n. $
Alors :

(i) Il existe $\alpha \in \Br K$ tel que pour tout $\gamma$ on ait
$A(P_{\gamma})=\alpha \in \Br K_{\gamma}$, et cet \'el\'ement $\alpha$ est uniquement d\'etermin\'e.

(ii) Dans le cas (a), soit  $X={\mathcal X}\times_{R}F$. Alors
  $\{ \partial_{\gamma}(A(P_{\gamma})) \}_{\gamma \in X^{(1)}}$ est dans le noyau de la fl\`eche
   $$ \sum_{\gamma \in X^{(1)}} 
   \Cores_{F(\gamma)/k} : 
   \bigoplus_{   \gamma \in X^{(1)}      }   
   H^1(F(\gamma),\mu_{n}) 
  \to H^1(F,\mu_{n}    ).
    $$
 \end{prop}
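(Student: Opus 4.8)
The plan is to deduce (i) from the exactness in the middle term of the Bloch-Ogus complex of $\mathcal X$, and then to obtain (ii) at once from (i) together with Weil reciprocity on the generic fibre.

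For (i): throughout I would use the fixed isomorphism $\Z/n\cong\mu_n$ to reconcile Tate twists, so as to regard $A$ as a class in $H^2_{nr}(K(Z)/K,\mu_n^{\otimes 2})={}_n\Br Z$; then for $\gamma\in\mathcal X^{(1)}$ the residue $\partial_\gamma(A(P_\gamma))$ of the evaluated class $A(P_\gamma)\in{}_n\Br K_\gamma$ lies in $H^1(\kappa(\gamma),\mu_n)=\kappa(\gamma)^\times/\kappa(\gamma)^{\times n}$, the middle term of the complex $C_{2,2}$ of $\mathcal X$, the residue being compatible with the one used in that complex. Since $A$ is unramified on $Z$ and $\mathcal X$ is a scheme over the field $k$, Proposition~\ref{obstrucsup}(ii), applied with $r=2$ and $i=2$, gives a dense open $U\subseteq\mathcal X$ on which $\partial_\gamma(A(P_\gamma))=0$; hence $\xi:=(\partial_\gamma(A(P_\gamma)))_{\gamma\in\mathcal X^{(1)}}$ is a well-defined element of $\bigoplus_{\gamma\in\mathcal X^{(1)}}\kappa(\gamma)^\times/\kappa(\gamma)^{\times n}$, and by hypothesis $\xi$ lies in the kernel of the boundary to $\bigoplus_{M\in\mathcal X^{(2)}}\Z/n$.

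Next I would invoke the short exact sequence of Remarque~\ref{complementBO},
\[0\longrightarrow{}_n\Br K\longrightarrow\bigoplus_{\gamma\in\mathcal X^{(1)}}\kappa(\gamma)^\times/\kappa(\gamma)^{\times n}\longrightarrow\bigoplus_{M\in\mathcal X^{(2)}}\Z/n\longrightarrow 0,\]
which holds here because $R$ is a $k$-algebra with $k$ separably closed (exactness on the left rests on $\Br\mathcal X=0$, exactness in the middle on Panin's theorem \cite{panin} giving the Gersten conjecture for the regular $k$-scheme $\mathcal X$). Exactness in the middle produces $\alpha\in{}_n\Br K$ with $\partial_\gamma(\alpha)=\partial_\gamma(A(P_\gamma))$ for all $\gamma$, and exactness on the left makes $\alpha$ unique. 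It then remains to pass from equality of residues to the equality $A(P_\gamma)=\alpha$ in $\Br K_\gamma$: for a fixed $\gamma$, the class $A(P_\gamma)-\alpha\in{}_n\Br K_\gamma$ has trivial residue along the henselian discrete valuation ring $\mathcal O^h_{\mathcal X,\gamma}$, hence comes from $\Br\mathcal O^h_{\mathcal X,\gamma}$ (residue exact sequence for a henselian discrete valuation ring, see \cite[III]{GBBr}), which injects into $\Br\kappa(\gamma)$. So one must check that $\Br\kappa(\gamma)=0$ at every codimension-one point $\gamma$ of $\mathcal X$, and here the two cases split: $\kappa(\gamma)$ is either a function field of one variable over the separably closed field $k$, and then $\cd\kappa(\gamma)\le1$ by Tsen, or a finite extension of $F$ in case~(a), respectively of $\operatorname{Frac}(R/\mathfrak p)$ for a height-one prime $\mathfrak p$ of $R$ in case~(b) --- i.e.\ a finite extension of the fraction field of a one-dimensional henselian excellent local $k$-domain with separably closed residue field, whose normalization is a henselian discrete valuation ring with separably closed residue field, so again of cohomological dimension $\le1$. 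In all cases $\Br\kappa(\gamma)=0$, whence $A(P_\gamma)=\alpha$ in $\Br K_\gamma$; this proves (i).

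For (ii) I would work in case~(a) and set $X=\mathcal X\times_R F=p^{-1}(\Spec F)$, an open subscheme of $\mathcal X$ which, by hypothesis, is a smooth geometrically integral projective curve over $F$. For $\gamma\in X^{(1)}\subseteq\mathcal X^{(1)}$ one has $F(\gamma)=\kappa(\gamma)$ and the same residue map as on $\mathcal X$, so part~(i) yields $\partial_\gamma(A(P_\gamma))=\partial_\gamma(\alpha)$, the residue at $\gamma$ of the fixed global class $\alpha\in{}_n\Br K=H^2(K,\mu_n^{\otimes 2})$ with $K=F(X)$, and the family $(\partial_\gamma(\alpha))_{\gamma\in X^{(1)}}$ is finitely supported. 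The generalized Weil reciprocity law for the smooth projective curve $X$ over $F$, recalled in \S\ref{recipweil} (the case $r=2$, $i=2$), then gives $\sum_{\gamma\in X^{(1)}}\Cores_{F(\gamma)/F}(\partial_\gamma(\alpha))=0$ in $H^1(F,\mu_n)$, which is exactly the asserted vanishing. The step I expect to be the real obstacle is the middle passage of (i): using the Bloch-Ogus complex of $\mathcal X$ as a genuine short exact sequence (not merely as a complex) is legitimate only because $\mathcal X$ contains a field, and the promotion from equal residues to equality in $\Br K_\gamma$ depends on the vanishing of $\Br\kappa(\gamma)$ at all codimension-one points --- a fact that is elementary but requires the somewhat tedious case-by-case inspection of those residue fields. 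Once (i) is secured, (ii) follows in a single line from Weil reciprocity on the generic fibre.
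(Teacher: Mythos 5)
Your proof of (i) is essentially the paper's own argument, written out in full detail: the paper simply cites Proposition \ref{blochogus} together with Remarque \ref{complementBO} (the exact Bloch--Ogus sequence $0 \to {}_n\Br K \to \bigoplus_{\gamma} \kappa(\gamma)^{\times}/\kappa(\gamma)^{\times n} \to \bigoplus_{M} \Z/n \to 0$, valid because $R$ is a $k$-algebra, so Panin's theorem gives Gersten for $\mathcal X$, and $\Br \mathcal X=0$) and the fact that, $k$ being separably closed, the residue map ${}_n\Br K_{\gamma} \to \kappa(\gamma)^{\times}/\kappa(\gamma)^{\times n}$ is an isomorphism; your case-by-case verification that $\Br\kappa(\gamma)=0$ at every codimension-one point is exactly the content of that last assertion, so no gap there. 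For (ii), however, you take a genuinely different route. The paper does not pass through (i): it invokes the covariant functoriality of the Bloch--Ogus complex under the proper morphism $p\colon \mathcal X \to \Spec R$ (Remarque \ref{foncpropre}), so that the hypothesis that $\{\partial_{\gamma}(A(P_{\gamma}))\}$ dies in $\bigoplus_{M}\Z/n$ pushes forward, via the right-hand commutative square (degree maps) and the isomorphism $F^{\times}/F^{\times n}\iso \Z/n$, to the vanishing of $\sum_{\gamma\in X^{(1)}}\Cores_{F(\gamma)/F}(\partial_{\gamma}(A(P_{\gamma})))$. You instead feed the global class $\alpha$ produced in (i) into the generalized Weil reciprocity law on the generic fibre $X/F$ recalled in \S\ref{recipweil}. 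Both arguments are correct and closely related --- the paper itself notes that the commutativity of the \emph{left} square of its pushforward diagram reflects precisely Weil reciprocity on the generic fibre --- but they buy different things: your version is more self-contained, resting only on the classical reciprocity complex of \S\ref{recipweil}, at the cost of making (ii) depend on the Gersten-dependent existence of $\alpha$ from (i), whereas the paper's version derives (ii) directly from the hypothesis, using only the easy part (the degree square) of the pushforward diagram and no input from (i).
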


\begin{proof}
L'\'enonc\'e (i)  r\'esulte de la proposition  \ref{blochogus} 
et de la remarque \ref{complementBO}
  et du fait que, comme $k$
est s\'eparablement clos, la fl\`eche de r\'esidu
$$ {}_{n}\Br K_{\gamma}  \to k(\gamma)^{\times}/k(\gamma)^{\times n}$$
est un isomorphisme. L'\'enonc\'e (ii)  r\'esulte de  la remarque
\ref{foncpropre} et du fait que la fl\`eche 
$$F^{\times}/F^{\times n} \to \Z/n$$
induite par la valuation est un isomorphisme.
\end{proof}
 
 \begin{rema}
 Pour $k$ un corps fini plut\^ot que s\'eparablement clos, l'\'enonc\'e ci-dessus
 ne vaut pas. On a  une obstruction possible  :
 la famille  $\{ \partial_{\gamma}(A(P_{\gamma})) \}_{{\gamma \in {\mathcal X}^{(1)} }}$ pourrait \^etre un \'el\'ement non nul
 dans l'homologie du complexe de Bloch-Ogus, qui est  alors un sous-groupe de $\Z/n$
 (Prop. \ref{blochogus}).

 Pour $\kappa$ un corps fini il serait  int\'eressant d'utiliser les obstructions
 \`a l'existence d'un $K$-point sur $Z$ d\'efinies par le groupe $H^3_{nr}(K(Z)/K,\mu_{n}^{\otimes 2})$.
 Voir \`a ce sujet \cite{HaSz}.
 \end{rema}

\section{L'\'equation $(X_{1}^2-aY_{1}^2)(X_{2}^2-bY_{2}^2)(X_{3}^2-abY_{3}^2)=c$}\label{algebre}

\subsection{Rappels}

 Soit $K$ un corps de caract\'eristique diff\'erente de 2, soient
 $a,b,c \in K^{\times}$. 
 Dans \cite{CT12}, on a \'etudi\'e le $K$-tore $Q$
    d\'efini par l'\'equation
 $$(X_{1}^2-aY_{1}^2)(X_{2}^2-bY_{2}^2)(X_{3}^2-abY_{3}^2)=1.$$
 Pour $c\in K^{\times}$, on note $E=E_{c}$
l'espace principal homog\`ene sous $Q$ d\'efini par l'\'equation
  $$(X_{1}^2-aY_{1}^2)(X_{2}^2-bY_{2}^2)(X_{3}^2-abY_{3}^2)=c,$$
  avec $c \in K^{\times}$.
Tout espace principal homog\`ene sous $Q$ s'\'ecrit ainsi pour un
$c$ convenable.

   Soit  $L$ la $K$-alg\`ebre \'etale $K[u,v]/(u^2-a, v^2-b)$. 
   Pour $\rho \in K^{\times}$, soit $K_{\rho}$ la $K$-alg\`ebre \'etale $K[x]/(x^2-\rho)$. 
   On d\'efinit des inclusions naturelles et compatibles $K_{a} \subset L$, $K_{b} \subset L$,
   $K_{ab} \subset L$.
   Soit $d \in K^{\times}$.
  Soit $E'_{d}$ la $K$-vari\'et\'e
 d'\'equation
 $$\Norm_{L/K}(\Xi) =  d.$$
 La $K$-vari\'et\'e $E'_{d}$ est un espace principal homog\`ene
sour le $K$-tore $T$ d'\'equation
$$\Norm_{L/K}(\Xi) = 1,$$
et tout espace principal homog\`ene sous $T$ est de la forme $E'_{d}$
 pour $d$ convenable.

Rappelons qu'un espace principal homog\`ene $E$  d'un $K$-tore $M$
a un point rationnel sur $K$ si et seulement si la classe $[E] \in H^1(K,M)$
est nulle.

D'apr\`es    \cite[Prop. 3.1]{CT12},
on a une suite exacte de $K$-tores
$$ 1 \to \G_{m,K}^2 \to Q \to T \to 1,$$
o\`u
l'application  $Q \to T$ est induite par l'application
$K_{a}^{\times} \times K_{b}^{\times}  \times K_{ab}^{\times} \to L^{\times}$
envoyant  un triplet $(\alpha_{a},\alpha_{b},\alpha_{ab})$ sur le produit 
$\alpha_{a}.\alpha_{b}.\alpha_{ab}$.
L'application $H^1(K,Q) \to H^1(K,T)$ envoie la classe
de $E=E_{c}$ sur la classe de $E'_{c^2}$.
 
Le th\'eor\`eme 90 de Hilbert donne $H^1(K,\G_{m})=0$. 
La suite exacte ci-dessus induit donc une injection
fonctorielle en le corps de base
$$ H^1(K,Q) \hookrightarrow H^1(K,T).$$  

\medskip

\begin{lem}
Soient $K$ un corps et $M$ un $K$-tore. Il existe 
un $K$-groupe de type multiplicatif fini $\mu$ 
et une inclusion fonctorielle en le corps de base
$$H^1(K,M) \hookrightarrow H^2(K,\mu).$$
\end{lem}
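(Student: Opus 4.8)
The statement is a standard fact: any torus embeds (on the level of $H^1$) into a group of multiplicative type. The plan is to use a flasque or quasi-trivial resolution of the torus $M$. First I would choose an exact sequence of $K$-tori
$$1 \to M \to P \to S \to 1$$
where $P$ is a quasi-trivial torus, i.e. $P$ is a product of tori of the form $R_{L/K}\G_m$ for finite separable extensions $L/K$; such a resolution exists because every character lattice of a torus admits a surjection from a permutation lattice. Then $H^1(K,P) = 0$ by Shapiro's lemma together with Hilbert 90 (each factor $R_{L/K}\G_m$ has $H^1(L',R_{L/K}\G_m) = H^1(L'\otimes_K L,\G_m) = 0$ for every field extension $L'/K$), and this vanishing is functorial in the base field. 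Hence the connecting map gives a functorial injection $H^1(K,M) \hookrightarrow H^2(K, M)$… but that is not yet what we want, since $M$ is a torus, not a finite group.

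**Reducing to finite coefficients.** To get finite coefficients I would instead kill $M$ by an isogeny. Pick a positive integer $n$ annihilating $H^1(K,M)$ — here is where one must be a little careful, since a priori $H^1(K,M)$ need not be bounded. The clean fix is to work with the multiplication-by-$n$ sequence
$$1 \to {}_{n}M \to M \by{n} M \to 1$$
which is exact as a sequence of group schemes of multiplicative type (over a field of any characteristic, with ${}_{n}M$ finite of multiplicative type). This yields an exact sequence $H^1(K,M) \by{n} H^1(K,M) \to H^2(K,{}_{n}M)$, so the cokernel of multiplication by $n$ on $H^1(K,M)$ injects into $H^2(K,{}_{n}M)$, functorially in $K$. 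The remaining issue is to arrange that multiplication by $n$ is zero on $H^1(K,M)$, which is the main obstacle.

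**Handling the $n$-divisibility / torsion issue.** I expect this to be the crux. One route: combine the two ideas. Start from a quasi-trivial resolution $1 \to M \to P \to S \to 1$; since $H^1(K,P)=0$, we get $H^1(K,M) \iso \Coker[P(K) \to S(K)]$, functorially. Now $S$ is a torus, and $H^1(K,M)$ is therefore a quotient of $S(K)$, a group which is $n$-divisible "up to" the torsion coming from roots of unity — more precisely one shows $S(K)/n$ is controlled by a finite group scheme. Concretely, from $1 \to {}_nS \to S \by n S \to 1$ one gets $S(K)/n \hookrightarrow H^1(K,{}_nS)$. Composing, $H^1(K,M)$ — being a quotient of $S(K)$ — maps to $H^1(K,M)/n$, which receives nothing new, so what one really gets is: the image of $S(K)$ in $H^1(K,M)$ killed modulo $n$... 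This shows that the right statement to prove is $H^1(K,M)/n \hookrightarrow H^2(K,{}_nM)$ for $M$ itself, and then observe that when $M$ is a torus the sequence $1\to {}_nM \to M \by n M\to 1$ already does the job once we know $H^1(K,M)$ is $n$-torsion. The latter holds because $H^1(K,M)$ is torsion (it is a quotient of $S(K)$ modulo the image of $P(K)$, and Galois cohomology $H^1$ of a torus over a field is always torsion — indeed killed by $n_0 = $ the degree of a splitting field times the order of the torsion in the character group), so choosing $n$ to be this $n_0$ works uniformly over all $K$ (the bound depends only on the torus, hence is preserved under base field extension). Thus: set $\mu = {}_{n_0}M$, a finite $K$-group scheme of multiplicative type with $n_0$ depending only on $M$; the sequence $1\to \mu \to M \by{n_0} M\to 1$ gives the functorial injection $H^1(K,M) = H^1(K,M)/n_0 \hookrightarrow H^2(K,\mu)$, and we are done. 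The one delicate point to verify carefully is that a single integer $n_0$ kills $H^1(L,M)$ for every extension field $L$ of $K$ simultaneously, which follows because $M$ is split by a fixed finite extension and the standard restriction–corestriction plus character-lattice torsion argument gives a bound depending only on $M$.
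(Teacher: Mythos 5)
Votre argument final est correct et coïncide essentiellement avec la preuve du texte : on prend $n$ égal au degré d'un corps de déploiement fini de $M$, on montre par restriction--corestriction et Hilbert 90 que $H^1(K,M)$ est annulé par $n$ (uniformément en le corps de base, puisque le degré du corps de déploiement ne peut que diviser $n$ après extension), puis la suite $1\to {}_{n}M\to M\to M\to 1$ fournit le plongement fonctoriel dans $H^2(K,{}_{n}M)$. Le détour initial par les résolutions quasi-triviales est superflu, mais la conclusion est la même.
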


\begin{proof} Soit $L/K$ une extension finie galoisienne,
de groupe de Galois $G$, d\'eployant le $K$-tore $M$.
Soit $n=[L:K]$ son degr\'e. Soit $\mu={}_{n}M$ le noyau de
la multiplication par $n$.
La multiplication par $n$ sur $M$ induit une suite exacte
de
  $K$-groupes de type multiplicatif
$$1 \to \mu \to  M \to M \to 1.$$
Le th\'eor\`eme 90 de Hilbert et un argument de corestriction-restriction
montre que le groupe $H^1(K,M)$ est annul\'e par $n$.
La suite exacte de cohomologie donne donc naissance \`a un plongement
$H^1(K,M) \hookrightarrow H^2(K,\mu)$, qui est fonctoriel en le corps de base.
\end{proof}
 
 Appliquant le lemme au cas $M=T$,  on obtient  une inclusion $ H^1(K,T) \hookrightarrow H^2(K,\mu)$ 
 fonctorielle en $K$, et donc aussi une inclusion $ H^1(K,Q) \hookrightarrow H^2(K,\mu)$
 fonctorielle en $K$.

  \bigskip
  
  Soit $Z$ une $K$-compactification lisse de $E=E_{c}$.
   On sait  \cite{CTHaSk} que sur tout corps il existe une telle compactification, qu'on peut m\^eme choisir
  \'equivariante pour l'action de $Q$ sur lui-m\^eme.
  On sait  \cite[Lemma 2.1 (iv)]{BoCTSk}
  que l'on a $Z(K)\neq \emptyset$ si et seulement si $E(K)\neq \emptyset$.
 
  On a montr\'e dans  \cite{CT12} que
   le groupe de Brauer de $Z$ modulo le groupe
  de Brauer de $K$ est engendr\'e par la classe de l'alg\`ebre de quaternions
  $$A= (X_{1}^2-aY_{1}^2,b) \in   \Br K(E).$$
   Dans le pr\'esent article,
  il nous suffira d'utiliser le fait \'evident que la classe $A$ appartient 
  \`a $\Br E$.

   En utilisant la bilin\'earit\'e du symbole $(\alpha,\beta)$, la relation
   $(u^2-av^2,a)=0$ pour tout $u,v,a$ avec $a(u^2-av^2)\neq 0$, et l'\'equation 
     $$(X_{1}^2-aY_{1}^2)(X_{2}^2-bY_{2}^2)(X_{3}^2-abY_{3}^2)=c,$$
 on voit  que la  classe   $A= (X_{1}^2-aY_{1}^2,b) \in   \Br K(E)$ 
 peut s'\'ecrire de multiples fa\c cons :
  
  \begin{equation}\label{Avia1}
  A= (X_{1}^2-aY_{1}^2,b)= (X_{1}^2-aY_{1}^2,ab),
  \end{equation}
 
   \begin{equation}\label{Avia2}
 A= (X_{2}^2-bY_{2}^2, a) +  (c,ab) =    (X_{2}^2-bY_{2}^2, ab) +  (c,ab) = (c(X_{2}^2-bY_{2}^2) , ab),
    \end{equation}
  
    \begin{equation}\label{Avia3}
   A=    (X_{3}^2-abY_{3}^2, a)  + (c,b)=   (X_{3}^2-abY_{3}^2, b)  + (c,b)  =  (c(X_{3}^2-abY_{3}^2), b).
     \end{equation}

   \bigskip

 \subsection{Un exemple}

Soient $R$ un anneau de valuation discr\`ete hens\'elien,  $F$ son corps des fractions,
$k$ son corps r\'esiduel, suppos\'e parfait
de caract\'eristique diff\'erente de 2, $\pi$ une uniformisante de $R$.
On note $v$ la valuation de $F$.

Soient ${\mathcal X}= {\mathbf P}^1_{R}$ et $X=  {\mathbf P}^1_{F}$. Soit $K$ le corps des fonctions rationnelles sur~${\mathcal X}$.
Notons $\Spec R[x] = {\mathbf A}^1_{R} \subset   {\mathbf P}^1_{R}$.
Notons $\eta$ le point g\'en\'erique de la fibre sp\'eciale ${\mathbf P}^1_{k} \subset   {\mathbf P}^1_{R}$.

Soit $c \in F^{\times}$. La r\'eunion des supports des diviseurs de $x$, $ x+1$,  $c$
et de la fibre sp\'eciale est un diviseur \`a croisements normaux sur ${\mathcal X}=  {\mathbf P}^1_{R}$,
et ce diviseur est un arbre, union de $x=0$, $x+1=0$, $x=\infty$ et $\pi=0$.

Soit $E_{c}$ la $k$-vari\'et\'e d\'efinie par l'\'equation
$$ (X_{1}^2-xY_{1}^2)(X_{2}^2-(x+1)Y_{2}^2)(X_{3}^2-x(x+1)Y_{3}^2) = c.$$

\begin{prop}\label{pascontrex}
Avec les hypoth\`eses et notations ci-dessus, supposons que $k$ est  le corps
$\C$ des complexes, ou  que $k$ est  un corps fini $\F$ dans lequel $-1$ est un carr\'e.

\smallskip

(a) Pour tout point $\gamma \in X^{(1)}$, on a $E_{c}(K_{\gamma}) \neq \emptyset$.

   Les conditions suivantes sont \'equivalentes :

(b1) $E_{c}(K)=\emptyset$.

(b2) $E_{c}(K_{\eta})=\emptyset$.

(b3) $c \notin F^{\times 2}$.

(b4) La proposition  \ref{weilrecip} appliqu\'ee \`a  $A=(X_{1}^2-xY_{1}^2,x+1) \in 
H^2_{nr}(K(E_{c}),\Z/2)$
et \`a la $F$-courbe $X$ donne une obstruction de r\'eciprocit\'e \`a l'existence d'un point dans $E_{c}(K)$.

\end{prop}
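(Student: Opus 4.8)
The plan is to deduce (a) from a case analysis over the places $\gamma\in X^{(1)}$, and to obtain the equivalences from the cycle $(b1)\Rightarrow(b3)\Rightarrow(b4)\Rightarrow(b1)$ together with $(b3)\Rightarrow(b2)\Rightarrow(b1)$; the two substantial points will be explicit residue computations with the quaternion class $A$.

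\emph{For (a).} Set $a_1=x$, $a_2=x+1$, $a_3=x(x+1)$. The first observation is that $E_c(K_\gamma)\neq\emptyset$ as soon as some $a_i$ is a square in $K_\gamma$, since then $X_i^2-a_iY_i^2$ is isotropic, hence represents $c$, and one puts the other two factors equal to $1$. The only closed points of $X$ at which some $a_i$ fails to be a unit are $x=0$, $x=-1$, $x=\infty$, all $F$-rational; there $x+1$ reduces to $1$, resp.\ $x$ reduces to $-1$ which is a square in $k$ hence in $F$, resp.\ $x(x+1)=(1/x)^2(1+1/x)$, so in each case one of the $a_i$ is a square in $K_\gamma$. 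At every other $\gamma$ the $a_i$ are units; if none of their residues is a square in $F(\gamma)$ then, since $\bar a_3=\bar a_1\bar a_2$, the classes $\bar a_1,\bar a_2$ are distinct in $F(\gamma)^\times/F(\gamma)^{\times 2}$. When $k=\C$ this cannot happen, the henselian discretely valued field $F(\gamma)$ having residue field $\C$, so that $F(\gamma)^\times/F(\gamma)^{\times 2}\cong\Z/2$. When $k=\F$ is finite, $F(\gamma)$ is a local field: the norm subgroups $\bar H_i=N_{F(\gamma)(\sqrt{\bar a_i})/F(\gamma)}(\cdot)$ have index $2$ with $\bar H_1\neq\bar H_2$, hence $\bar H_1\bar H_2=F(\gamma)^\times$; as each $K_\gamma(\sqrt{a_i})/K_\gamma$ is unramified, $c$ (a unit of $K_\gamma$) is a product of two norms, obtained by lifting a factorization of its residue and absorbing the remaining $1$-unit, which is a norm from an unramified quadratic extension. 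So $E_c(K_\gamma)\neq\emptyset$ in all cases.

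\emph{The implications $(b1)\Rightarrow(b3)\Rightarrow(b4)\Rightarrow(b1)$.} If $c\in F^{\times2}$ then $(\sqrt c,0,1,0,1,0)\in E_c(K)$, giving $(b1)\Rightarrow(b3)$. For $(b3)\Rightarrow(b4)$, fix a smooth compactification $Z$ of $E_c$; by (a) the set $\prod_{\gamma\in X^{(1)}}Z(K_\gamma)$ is nonempty, and the claim to prove is that for every family $\{P_\gamma\}$ there, $\sum_\gamma\Cores_{F(\gamma)/F}\bigl(\partial_\gamma(A(P_\gamma))\bigr)=[c]\in F^\times/F^{\times2}$. Away from $x=0,x=-1,x=\infty$ the torsor $E_c$ and $Z$ have good reduction, $A$ is the cup product of two functions invertible on a smooth model over $O_{X,\gamma}$, so it comes from the Brauer group of that model and $\partial_\gamma(A(P_\gamma))=0$ (as in Proposition \ref{weilrecip}(i)). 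Over $K_0$ the element $x+1$ is a square, so $A=0$ in $\Br K_0(Z)$ by (\ref{Avia1}); over $K_\infty$ the element $x(x+1)$ is a square, so again $A=0$ by (\ref{Avia1}); over $K_{-1}$ the element $x$ is a square (here $-1\in k^{\times2}$ is used), so by (\ref{Avia2}) $A$ is the image of the constant class $(c,x+1)\in\Br K_{-1}$, with residue $[c]$ along $x=-1$. Hence $\partial_0(A(P_0))=\partial_\infty(A(P_\infty))=0$ and $\partial_{-1}(A(P_{-1}))=[c]$ regardless of the local points, proving the claim. When $c\notin F^{\times2}$ this common value is nonzero, so Proposition \ref{weilrecip} exhibits an obstruction; and $(b4)\Rightarrow(b1)$ is Proposition \ref{weilrecip}(ii), a $K$-point forcing the pairing to vanish on the constant family.

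\emph{The step $(b3)\Rightarrow(b2)$, and the main difficulty.} Replacing $K_\eta$ by its completion (legitimate since $E_c$ is smooth), I would use that $\widehat{K_\eta}$ is complete discretely valued with residue field $\kappa(\eta)=k(x)$, that $a_1,a_2,a_3$ are units whose residues $x,x+1,x(x+1)$ are non-squares in $k(x)$, and hence that each $\widehat{K_\eta}(\sqrt{a_i})/\widehat{K_\eta}$ is unramified quadratic. The structure of norms of such extensions — the valuation of a norm is even, residues of norms of units constitute the norm subgroup of $k(x)^\times$, and $1$-units are norms — shows that $E_c(\widehat{K_\eta})\neq\emptyset$ if and only if $v(c)$ is even and the torsor $\overline E_{\bar c_0}$ over $k(x)$ defined by $\prod(X_i^2-\bar a_iY_i^2)=\bar c_0$ has a rational point, where $c=\pi^{v(c)}c_0$ and $\bar c_0\in k^\times$. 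For $k=\C$, $\C(x)$ has cohomological dimension $\leq1$, so $\overline E_{\bar c_0}(\C(x))\neq\emptyset$ automatically and $E_c(\widehat{K_\eta})\neq\emptyset\iff v(c)$ even $\iff c\in F^{\times2}$. For $k=\F$ finite, I would repeat the residue computation of the preceding paragraph over the curve $\mathbf P^1_\F$ with the class $A$: a rational point of $\overline E_{\bar c_0}$ would make the pairing vanish, yet the computation gives $[\bar c_0]\in\F^\times/\F^{\times2}$, forcing $\bar c_0\in\F^{\times2}$; with $v(c)$ even this yields $c\in F^{\times2}$. In either case $c\notin F^{\times2}$ gives $E_c(\widehat{K_\eta})=\emptyset$, hence $E_c(K_\eta)=\emptyset$, and the trivial $(b2)\Rightarrow(b1)\Rightarrow(b3)$ closes the cycle. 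The main obstacle is exactly this last step: reducing solvability over $\widehat{K_\eta}$ to the special fibre demands the precise form of norm groups of unramified quadratic extensions, and one must then notice that the residual problem over $k(x)$ is controlled by the same quaternion $A$, so the argument is self-similar yet not circular, the base losing one in cohomological dimension. The residue computations, in contrast, are short once one uses the three presentations (\ref{Avia1})–(\ref{Avia3}) of $A$ and the fact that $x+1$, $x$, $x(x+1)$ become squares at $x=0$, $x=-1$, $x=\infty$ respectively.
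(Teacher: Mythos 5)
Your proof is correct and follows essentially the same route as the paper: part (a) by showing that one of $x$, $x+1$, $x(x+1)$ becomes a square in each $K_\gamma$, and the equivalences by computing that the reciprocity sum of la proposition \ref{weilrecip} equals $[c]\in F^{\times}/F^{\times 2}$ for every adelic family, the whole contribution being concentrated at $x=-1$ via the presentation (\ref{Avia2}) of $A$ (the paper delegates this computation to [CT12, Prop.\ 5.1(d)]). Two remarks. First, your treatment of $(b3)\Rightarrow(b2)$ when $k=\F$ est fini and $v(c)$ is even — reduction of solvability over $\widehat{K_\eta}$ to the residual torsor over $\F(x)$ via norm groups of unramified quadratic extensions, then a second reciprocity computation over $\mathbf{P}^1_{\F}$ — is correct and supplies a step that the paper's own proof leaves implicit (it only treats $v(c)$ impair explicitly for (b2)). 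Second, your justification that $\partial_\gamma(A(P_\gamma))=0$ away from $0,-1,\infty$ via \emph{$A$ comes from the Brauer group of a smooth model} is slightly loose, since a $K_\gamma$-point of the affine torsor need not extend to an integral point of an affine model; the conclusion nevertheless holds because at every such $\gamma$ one of $x$, $x+1$, $x(x+1)$ is a square in $K_\gamma$, so by la proposition \ref{valeursdeA} the value $A(P_\gamma)$ is $0$ or $(c,x+1)$, which is visibly unramified at $\gamma$.
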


\begin{proof}
Pour tout corps $\ell$  extension finie de $k$, le quotient
$\ell^{\times}/\ell^{\times 2}$ a au plus deux \'el\'ements et   $-1$
est un carr\'e dans $\ell$.
Un calcul simple
 montre qu'en tout point ferm\'e  $\gamma$ de la droite projective $ {\mathbf P}^1_{F}$,
l'un de $x, x+1, x(x+1)$ est un carr\'e dans le hens\'elis\'e $K_{x}$.
Pour tout $c \in F^{\times}$ et tout  point ferm\'e  $\gamma$ de la droite projective $ {\mathbf P}^1_{F}$,
on a alors clairement $E_{c}(K_{\gamma}) \neq \emptyset$.

Au point g\'en\'erique $\eta$  de la fibre sp\'eciale $ {\mathbf P}^1_{k} \subset  {\mathbf P}^1_{R}$,
chacun de $x$,  $x+1$, $x(x+1)$ est une unit\'e, mais aucun n'est un carr\'e dans le corps r\'esiduel
en $\eta$. Ceci implique que pour $X_{i}, Y_{i} \in K_{\eta}$ tels que le produit
$$(X_{1}^2-xY_{1}^2)(X_{2}^2-(x+1)Y_{2}^2)(X_{3}^2-x(x+1)Y_{3}^2) $$
soit non nul, la valuation en $\eta$ de ce produit est paire (voir le lemme \ref{facile} (ii)).

Si   $v(c)$ est impaire, on voit que $E_{c}(K_{\eta}) = \emptyset$. Ainsi $E_{c}(K) = \emptyset$.

Supposons  $v(c)$ paire. On peut supposer que  $c$ est une unit\'e dans $R$.
Si $k=\C$, alors   $c$ est un carr\'e dans $R$
et de fa\c con \'evidente $E_{c}(K) \neq \emptyset$.

Soit $A = (X_{1}^2-xY_{1}^2, x+1) \in {}_{2}\Br E_{c}$, qui est non ramifi\'ee sur un mod\`ele
projectif et lisse $Z_{c}$ de $E_{c}$ sur $K$. La proposition 5.1 (d) de \cite{CT12} permet de calculer
l'obstruction  associ\'ee \`a $A \in H^2_{nr}(K(E)/K,\Z/2)$  par la 
proposition \ref{weilrecip}. 
On trouve que pour toute famille $\{P_{\gamma} \in E(K_{\gamma})\}$, $\gamma$  parcourant les points ferm\'es
de $\P^1_{F}$, 
$$ \sum_{\gamma} \Cores_{F(\gamma)/F} \partial_{\gamma}(A(P_{\gamma})) = c \in F^{\times}/F^{\times 2}=H^1(F,\Z/2).$$
Ainsi $E(K)=\emptyset$ si $v(c)$ est impaire, mais aussi si $v(c)$ est paire
et $c$ non carr\'e, ce qui peut se produire si $k=\F$ est un corps fini.
\end{proof}

\begin{rema}
Soit $K$ le corps des fonctions d'une courbe $X$ projective, lisse, g\'eom\'etriquement connexe sur $\C((t))$.  On verra  \`a la remarque \ref{25suffitpas} que, pour $E$ un espace principal homog\`ene sous un $K$-tore $Q$
comme ci-dessus, on peut avoir   $E(K_{v}) \neq \emptyset$ pour tout compl\'et\'e de
$K$ en une valuation discr\`ete, pas d'obstruction de r\'eciprocit\'e 
comme dans la proposition 2.5, par rapport aux points ferm\'es de la $\C((t))$-courbe $X$,
et
n\'eanmoins $E(K)=\emptyset$.
 
Le corollaire  \ref{arbreimpliquepoint} ci-dessous explique pourquoi on ne peut s'attendre \`a une telle situation dans le cas
de la proposition \ref{pascontrex}, o\`u  $X=\P^1_{\C((t))}$.

\end{rema}

\begin{rema}
Supposons   $k=\F$ fini.
Si  $c \in F^{\times}$ n'est pas
un  carr\'e, 
il existe un \'el\'ement $d \in F^{\times}$ tel que 
les classes de $c$ et $d$ aient un cup-produit non nul dans $H^2(F,\Z/2)$.
Notons  $B= A \cup (d) \in H^3_{nr}(K(E)/K,\Z/2)$.
On trouve que pour toute famille $\{P_{\gamma} \in E(K_{\gamma})\}$, $\gamma$  parcourant les points ferm\'es
de $\P^1_{F}$, 
$$ \sum_{\gamma} \Cores_{F(\gamma)/F} \partial_{\gamma}(B(P_{\gamma})) = c \cup d  \neq 0  \in H^2(F,\Z/2)=\Z/2.$$
Ainsi  l'application de la proposition  \ref{weilrecip}  \`a $B \in H^3_{nr}(K(E)/K,\Z/2)$ 
permet aussi d'\'etablir $E_{c}(K)=\emptyset$.
On trouve ainsi  une illustration des r\'esultats g\'en\'eraux de Harari et Szamuely \cite{HaSz}.
 \end{rema}

\section{Calculs locaux}\label{evallocaledeA}

\subsection{Existence de points rationnels de $E$  sur un corps valu\'e discret}

Soient $R$ un anneau de valuation discr\`ete,  $\kappa$ son corps r\'esiduel
suppos\'e de caract\'eristique diff\'erente de 2, et $K$ son corps des fractions.
On note $v : K^{\times} \to \Z$ la valuation associ\'ee.

\begin{lem}\label{facile}
Soit $d \in K^{\times}$.  

(i) Si $d  \in  K^{\times 2}$, alors tout \'el\'ement de  $K^{\times}$
peut s'\'ecrire sous la forme $x^2-dy^2$ avec $x,y \in K$.

Supposons $R$ hens\'elien.

(ii)  Si   $v(d)$ est paire et $d$ n'est pas un carr\'e dans $R$,
alors pour $x,y \in K$ avec $x^2-dy^2\neq 0$, la valuation $v(x^2-dy^2)$ est paire.

(iii) Si $v(d)$ est impaire,   alors
 les classes
dans $K^{\times}/K^{\times 2}$ des
valeurs  prises par
 $x^2-dy^2$ pour $x,y \in K$ avec $x^2-dy^2\neq 0$ sont exactement $1$ et $-d$.
 
 (iv) Si $v(d)$ est paire et $d$ n'est pas un carr\'e dans $R$,
 si de plus 
le corps $\kappa$ satisfait $\cd_{2}\kappa \leq 1$,  alors les
valeurs  prises par
 $x^2-dy^2$  pour $x,y \in K$ avec $x^2-dy^2\neq 0$
sont exactement les produits
 d'une unit\'e et d'un carr\'e de~$K$.
\end{lem}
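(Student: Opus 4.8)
The statement bundles four elementary facts about the quadratic form $x^2 - dy^2$ over a discretely valued field; I would treat each item in turn, reducing everything to the behavior of the norm form of the étale quadratic algebra $K[x]/(x^2-d)$ and to the value group $\Z$.

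For (i): if $d$ is a square, then $x^2-dy^2$ factors as $(x-\sqrt d\,y)(x+\sqrt d\,y)$, so as $x,y$ range over $K$ the values $x^2 - dy^2$ realize every product $uv$ with $u,v \in K^\times$ subject only to $u,v$ being arbitrary (take $u$ arbitrary, $v$ arbitrary, solve $x = (u+v)/2$, $y = (u-v)/(2\sqrt d)$); in particular every element of $K^\times$ is of this shape. This needs only $\car K \neq 2$, which is in force.

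For (ii)-(iv) I would use the standard dichotomy: $K(\sqrt d)/K$ is a ramified or unramified quadratic extension according to the parity of $v(d)$. In case (ii), $v(d)$ even, $d$ not a square: then, after scaling $d$ by a square we may assume $v(d)=0$ and $d$ a non-square unit, so $K(\sqrt d)/K$ is the unramified quadratic extension; the norm form $x^2-dy^2$ is anisotropic over the residue field $\kappa$ and represents only elements of even valuation, since the valuation on $K(\sqrt d)$ extends $v$ with the same value group $\Z$ and $v(x^2 - dy^2) = v(N_{K(\sqrt d)/K}(x + y\sqrt d)) = v_{K(\sqrt d)}(x + y\sqrt d)$, which is automatically even only after noting the residue extension is degree $2$ — more simply, write $x = \pi^a u$, $y = \pi^b w$ with $u,w$ units (or zero), and analyze $v(u^2 - dw^2)$ by reduction mod $\pi$, using that $\bar u^2 - \bar d\,\bar w^2 \neq 0$ in $\kappa$ when not both vanish; here henselianity is used to ensure no cancellation can be "fixed up". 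For (iii), $v(d)$ odd: $K(\sqrt d)/K$ is ramified, $\sqrt d$ is a uniformizer up to units, and a Newton-polygon / direct case analysis on $v(x^2) = 2v(x)$ versus $v(dy^2) = v(d) + 2v(y)$ (which have opposite parities, hence are never equal) shows $v(x^2 - dy^2) = \min(2v(x), v(d)+2v(y))$, so the value is a unit times $1$ or a unit times $d$ (equivalently times $-d$ after adjusting signs, using that $x^2$ and $-dy^2$ have definite signs of valuation); that the unit can be taken to be a square requires henselianity again (a unit congruent to $1$ is a square when $\car \kappa \neq 2$) — actually one gets exactly the classes $1$ and $-d$ by choosing $y=0$ (giving squares, class $1$) and $x=0$ (giving $-dy^2$, class $-d$), and the content is that these are the *only* classes that occur, which is the valuation-parity argument. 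For (iv), $v(d)$ even and $d$ a non-square unit (so again the unramified case as in (ii)), combine (ii) — values have even valuation, hence after scaling by a square are units — with the hypothesis $\cd_2 \kappa \leq 1$: this forces $\kappa^\times/\kappa^{\times 2}$ and $H^1(\kappa,\Z/2)$ to be trivial enough that the norm form $x^2 - \bar d y^2$ over $\kappa$ is surjective onto $\kappa^\times$ (a $2$-dimensional anisotropic form over a field of $\cd_2 \leq 1$ is universal among units), and then Hensel lifts a unit represented mod $\pi$ to a unit represented over $K$; thus every unit is a value, and by (ii) every value is a unit times a square.

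The main obstacle is item (iv): one must correctly invoke $\cd_2 \kappa \leq 1$ to guarantee that the anisotropic binary form $\langle 1, -\bar d\rangle$ over the residue field represents every nonzero residue class, and then run a Hensel's-lemma lifting to pass from a representation modulo $\pi$ to one over $K$ — the subtlety is ensuring the lifted solution still satisfies $x^2 - dy^2 \neq 0$ and lands in the prescribed class (unit times square), which is where one uses that a unit $\equiv 1 \pmod{\pi}$ is a square in a henselian field of residue characteristic $\neq 2$. The other three parts are direct Newton-polygon bookkeeping on the value group $\Z$ together with the square/non-square dichotomy of the residue field, and should go through with no real friction.
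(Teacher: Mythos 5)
Votre démonstration est correcte et suit essentiellement la même voie que celle de l'article : les points (i)--(iii) s'obtiennent par l'analyse de parité des valuations que vous détaillez (et que l'article déclare « clairs »), et pour (iv) vous invoquez exactement la remarque-clé du texte, à savoir que sous l'hypothèse $\cd_{2}\kappa \leq 1$ toute équation $x^2-\alpha y^2=\beta$ avec $\alpha,\beta \in \kappa^{\times}$ a une solution dans $\kappa$, suivie d'un relèvement hensélien. Rien à redire.
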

 \begin{proof}  
 Les \'enonc\'es (i) \`a (iii) sont clairs.
 Pour l'\'enonc\'e (iv), il suffit de remarquer que sous
 l'hypoth\`ese $\cd_{2}\kappa \leq 1$,  toute 
 \'equation  $x^2-\alpha y^2= \beta $ avec $\alpha, \beta \in \kappa^{\times}$ poss\`ede une solution
 avec $x,y \in \kappa$.
\end{proof}

\begin{prop}\label{pointslocaux}
Supposons $R$ hens\'elien.
Soient $a,b,c \in K^{\times}$.
  On consid\`ere  la $K$-vari\'et\'e $E$ d\'efinie par  l'\'equation
$$(X_{1}^2-aY_{1}^2)(X_{2}^2-bY_{2}^2)(X_{3}^2-abY_{3}^2)=c.$$

(a) Si les trois conditions suivantes sont  simultan\'ement satisfaites :

(i) les \'el\'ements $a$ et $b$ ont une valuation paire,

(ii)  aucun de $a$, $b$, $ab$,  n'est un carr\'e
dans $K$,

(iii)  la valuation de $c$ est impaire,

alors $E(K)=\emptyset$. 

(b) Si l'on a   $\cd_{2}(\kappa) \leq 1$,
la r\'eciproque est vraie.
\end{prop}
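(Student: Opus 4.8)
The statement has two parts. Part (a) is essentially a reciprocity/parity obstruction, and I would prove it exactly as in the proof of Proposition~\ref{pascontrex}: under hypotheses (i), (ii), (iii), look at the valuation $v$ of the left-hand side of the defining equation of $E$ evaluated at a hypothetical $K$-point. By Lemma~\ref{facile}(ii), since $a$ and $b$ have even valuation and none of $a,b,ab$ is a square in $K$, each factor $X_1^2-aY_1^2$, $X_2^2-bY_2^2$, $X_3^2-abY_3^2$ has even valuation (when nonzero). Hence their product has even valuation, contradicting $v(c)$ odd. So $E(K)=\emptyset$. This is short; the only care needed is that a $K$-point of $E$ forces all three factors to be nonzero, which is immediate since their product is the nonzero element $c$.

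Part (b) is the substantive direction: under the extra assumption $\cd_2(\kappa)\le 1$, if \emph{none} of (i),(ii),(iii) fails in the sense needed—that is, we must show that whenever at least one of (i),(ii),(iii) is violated, $E(K)\neq\emptyset$. So I would split into three cases according to which hypothesis fails. \emph{Case A: one of $a,b,ab$ is a square in $K$}, i.e.\ (ii) fails. Then by Lemma~\ref{facile}(i) the corresponding factor $X_j^2 - (\cdot)Y_j^2$ represents every element of $K^\times$; fix the other two factors to be $1$ (e.g.\ $X_i=1,Y_i=0$) and solve the remaining factor equal to $c$. Hence $E(K)\neq\emptyset$. \emph{Case B: $a$ or $b$ has odd valuation}, i.e.\ (i) fails (and we may assume we are not in Case A). Say $v(a)$ is odd. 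By Lemma~\ref{facile}(iii), the values of $X_1^2-aY_1^2$ fill up the classes $1$ and $-a$ in $K^\times/K^{\times 2}$; more precisely by adjusting $X_1,Y_1$ we can make $X_1^2-aY_1^2$ equal to any unit times a square, or $-a$ times any unit times a square. Combined with a similar flexibility for the factor involving $ab$ (whose valuation is then also odd, unless $b$ has odd valuation too, in which case use the $b$-factor), one arranges the product to equal $c$ up to the needed normalization; here the hypothesis $\cd_2(\kappa)\le 1$ is what makes Lemma~\ref{facile}(iv) available, so that the relevant factor represents \emph{every} unit times a square. \emph{Case C: $v(c)$ is even}, i.e.\ (iii) fails (and assume we are not in Cases A or B, so $a,b,ab$ all have even valuation and are non-squares). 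Normalize $c$ to be a unit in $R$. By Lemma~\ref{facile}(iv), under $\cd_2(\kappa)\le 1$, each factor $X_j^2-(\cdot)Y_j^2$ (with $(\cdot)\in\{a,b,ab\}$ a non-square unit, after normalizing) represents every unit times a square in $K$. In particular, setting two of the factors to $1$ and solving the third equal to $c$ (a unit) is possible. So $E(K)\neq\emptyset$.

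The main obstacle, and the step deserving the most care, is Case B (odd valuation of $a$ or $b$) when combined with the exact bookkeeping of square classes: one must verify that the \emph{product} of the three factors can be made to equal the \emph{prescribed} $c$, not merely something in the right square class. The clean way is to reduce to Case A or Case C after a change of variables: if $v(a)$ is odd, then either $v(b)$ is even, in which case $v(ab)$ is odd and we can use Lemma~\ref{facile}(iii)/(iv) on the $ab$-factor to absorb the odd part of $c$'s valuation while using the unit-flexibility of the even-valuation factor to fix the unit part; or $v(b)$ is odd as well, in which case $v(ab)$ is even, $ab$ is a unit up to squares, and we are effectively back to the situation where one distinguished factor handles units (Lemma~\ref{facile}(iv)) and another handles the parity. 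In all sub-cases the presence of three factors with complementary valuation-parities gives enough freedom; writing $c = u\pi^{2m}$ or $c=u\pi^{2m+1}$ and chasing through Lemma~\ref{facile} settles it. I expect the proof in the paper to organize this more slickly—perhaps by directly exhibiting a point—but the case analysis above is the natural route.
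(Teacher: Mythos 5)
Your proof is correct and takes exactly the route the paper intends: the paper's own proof of this proposition is the single sentence asserting that everything follows easily from Lemma~\ref{facile}, and your case analysis (parity obstruction for (a); the three cases A, B, C with the odd-valuation factor absorbing the parity of $v(c)$ via the value $-a$ or $-ab$, and an even-valuation non-square factor supplying an arbitrary unit via part (iv)) is the natural unpacking of that. One small imprecision: when $v(a)$ is odd, Lemma~\ref{facile}(iii) says the values of $X_1^2-aY_1^2$ are \emph{exactly} the square classes of $1$ and $-a$, not any unit times a square nor $-a$ times any unit times a square --- but since your argument only ever draws the values $1$ and $-a$ from that factor and gets the unit flexibility from the even-valuation factor through part (iv) (which is where $\cd_{2}(\kappa)\leq 1$ enters), nothing breaks.
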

\begin{proof} Ceci se d\'eduit ais\'ement du lemme \ref{facile}.
\end{proof}

\subsection{Valeurs prises par l'alg\`ebre $A \in \Br E$
  sur un corps valu\'e discret}\label{valeursvaldisc}

Soit $R$ un anneau de valuation discr\`ete de rang 1 avec $2 \in R^{\times}$.
Soient $K$  le corps des fractions de $R$ et $\kappa$ son corps r\'esiduel.
On note $\pi$ une uniformisante de $R$.

 On a la suite exacte bien connue (cf. \cite[III, Prop. (2.1)]{GBBr})
$$0 \to {}_{2}\Br R \to {}_{2}\Br K \to \kappa^{\times}/\kappa^{\times 2} \to 1.$$
On note $\partial : {}_{2}\Br K \to \kappa^{\times}/\kappa^{\times 2}$ l'application r\'esidu.
Il est bien connu \cite[\S 1]{kato}  qu'elle  envoie la classe d'une alg\`ebre de quaternions $(a,b)$ sur la classe
$$(-1)^{v(a)v(b)} \overline{a^{v(b)}/b^{v(a)} } \in  \kappa^{\times}/\kappa^{\times 2} .$$

Si $R$ est hens\'elien, alors $ {}_{2}\Br R \oi  {}_{2}\Br \kappa$.
Si $R$ est hens\'elien et $\cd_{2} (\kappa) \leq 1$, alors
$\partial : {}_{2}\Br K \oi \kappa^{\times}/\kappa^{\times 2}.$

 \begin{prop}\label{valeursdeA}
  Soient $a,b,c \in K^{\times}$.
Soit $E$  la $K$-vari\'et\'e $E$ d\'efinie par
$$(X_{1}^2-aY_{1}^2)(X_{2}^2-bY_{2}^2)(X_{3}^2-abY_{3}^2)=c.$$
 Soit 
$A = (X_{1}^2-aY_{1}^2,b) \in \Br E.$
Soit $ev_{A} : E(K) \to \Br K$ l'application donn\'ee par $P \mapsto A(P)$,
et
soit $\Delta$ l'application compos\'ee
$$ E(K) \to {}_{2}\Br K \to \kappa^{\times}/\kappa^{\times 2}.$$

$$ P \mapsto   \partial(A(P)).$$

(i) Si  $b$ ou $ab$ est un carr\'e dans $K$, l'image de $ev_{A}$ est $0$.

(ii) Si $a$ est un carr\'e dans $K$, l'image de $ev_{A}$ est $(c,b)$;
si de plus $c$ et $b$ sont de valuation paire,
 l'image de $\Delta$ est $1$.

(iii)   Supposons $R$ hens\'elien.
Si $v(a)$ et $v(b)$ sont paires et $a$ n'est pas un carr\'e dans $K$,
et si $E(K)  \neq \emptyset$, 
l'image de $\Delta$ est $1$.

 (iv)  Supposons $R$ hens\'elien.
 Supposons qu'aucun de $a,b,ab$ n'est un carr\'e dans $K$,
 et que $v(a)$ et $v(b)$ ne sont pas toutes deux paires.
 
 Si $v(a)$ est paire et $v(b)$ impaire, l'image de $ev_{A}$
 est contenue dans $\{ (c,b), (ac,b) \}$.

 Si $v(a)$ est impaire et $v(b)$ paire, l'image de 
 $ev_{A}$ est contenue dans $\{ 0, (-a,b) \}$.

 Si $v(a)$ et $v(b)$ sont impaires, 
 l'image de 
 $ev_{A}$ est contenue dans $\{ 0, (-a,b) \}$.
 
 L'image de $\Delta$
 est contenue dans la r\'eunion de deux classes (\'eventuellement confondues) dans
  $\kappa^{\times}/\kappa^{\times 2}$
qui diff\`erent par
multiplication par $(-1)^{v(a).v(b)}\partial((a,b))$.

(v)  Sous l'hypoth\`ese de (iv), si de plus 
   $\cd_{2}(\kappa) \leq 1$,  alors les classes indiqu\'ees ci-dessus
 sont toutes dans l'image de $ev_{A} $, resp. dans celle de  $\Delta$.
\end{prop}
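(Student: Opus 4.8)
The plan is to fix a point $P\in E(K)$ whose coordinates produce $\xi_{1}=X_{1}^2-aY_{1}^2$, $\xi_{2}=X_{2}^2-bY_{2}^2$, $\xi_{3}=X_{3}^2-abY_{3}^2$ with $\xi_{1}\xi_{2}\xi_{3}=c$, and to evaluate $A(P)=(\xi_{1},b)$ by playing the three presentations \eqref{Avia1}, \eqref{Avia2}, \eqref{Avia3} of $A$ against Lemme \ref{facile} and the residue formula $\partial((\alpha,\beta))=(-1)^{v(\alpha)v(\beta)}\overline{\alpha^{v(\beta)}/\beta^{v(\alpha)}}$ recalled above. Parts (i) and (ii) are then immediate: if $b$ (resp. $ab$) is a square then $(\xi_{1},b)=0$ directly (resp. via \eqref{Avia1} as $(\xi_{1},ab)=0$); if $a$ is a square then $K(\sqrt{ab})=K(\sqrt{b})$, so $\xi_{3}$ is a norm from $K(\sqrt b)$ and $(\xi_{3},b)=0$, whence \eqref{Avia3} gives $A(P)=(c,b)$ for every $P$, and the residue formula forces $\partial((c,b))=1$ once $v(c),v(b)$ are even. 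For (iii) the hypotheses on $a$ let Lemme \ref{facile}(ii) make $v(\xi_{1})$ even; as $v(b)$ is even as well, both slots of the symbol $(\xi_{1},b)$ have even valuation and the residue formula yields $\partial(A(P))=1$.

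For (iv) the idea is, in each parity case, to present $A$ so that its first slot is the value of a norm form in a variable of odd valuation, so that Lemme \ref{facile}(iii) pins that value to two classes modulo squares. When $v(a)$ is odd I keep $A=(\xi_{1},b)$ with $\xi_{1}\in\{1,-a\}$, giving $ev_{A}\in\{0,(-a,b)\}$. When $v(a)$ is even (hence $v(ab)$ odd) I use $A=(c\xi_{3},b)$ from \eqref{Avia3} with $\xi_{3}\in\{1,-ab\}$, giving $ev_{A}\in\{(c,b),(-abc,b)\}$, and $(-abc,b)=(ac,b)$ using $(b,b)=(-1,b)$ and $2(-1,b)=0$. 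Applying the homomorphism $\partial$, the two classes in the image of $\Delta$ differ by $\partial((a,b))$ in the even case and by $\partial((-a,b))=\overline{(-1)^{v(b)}}\,\partial((a,b))$ in the odd case; since $(-1)^{v(a)v(b)}=1$ when $v(a)$ is even and $(-1)^{v(a)v(b)}=(-1)^{v(b)}$ when $v(a)$ is odd, in every case the difference equals $(-1)^{v(a)v(b)}\partial((a,b))$, as claimed.

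For (v) I must realise both values by honest $K$-points, and this is where $\cd_{2}(\kappa)\leq 1$ enters. In each case the value of $ev_{A}$ depends only on the class of one norm variable; I realise its two classes by the explicit values $1$ (taking the relevant $Y_{i}=0$) and $-a$ or $-ab$ (taking the relevant $X_{i}=0$). It remains to write the complementary factor $c/\xi$ as a product $\xi'\xi''$ of values of the other two norm forms, that is, to show the product $G'G''$ of the two corresponding norm subgroups of $K^{\times}$ equals $K^{\times}$. In each parity case one of these two extensions is the unramified quadratic attached to a non-square unit: by surjectivity of its residue norm, guaranteed by $\cd_{2}(\kappa)\leq 1$ (this is precisely Lemme \ref{facile}(iv)), its form represents every element of even valuation, so $G'$ contains $R^{\times}$ and is the even-valuation subgroup; the other extension is ramified and its form represents the element $-d$ of odd valuation. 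A subgroup of $K^{\times}$ containing every unit together with one element of odd valuation is all of $K^{\times}$, so $G'G''=K^{\times}$ and every complementary factor is representable. Hence both values of $ev_{A}$ are attained, and applying $\partial$ gives the corresponding statement for $\Delta$.

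I expect the heart of the argument, and the only genuinely delicate point, to be the identification $G'G''=K^{\times}$ in part (v): it is the sole step using $\cd_{2}(\kappa)\leq 1$ in an essential way, and it rests on the precise ramified-versus-unramified description of the binary norm forms recorded in Lemme \ref{facile}(iii)--(iv).
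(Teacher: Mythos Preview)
Your proof is correct and follows the same strategy as the paper: exploit the three presentations \eqref{Avia1}--\eqref{Avia3} of $A$ together with Lemme~\ref{facile} and the explicit residue formula. For (i)--(iii) your argument is identical to the paper's. For (iv)--(v) the paper in fact omits the details entirely, merely indicating that one uses Lemme~\ref{facile} and splits according to the parity of $v(c)$; your write-up is therefore more complete than the original.

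One small organisational difference worth noting: your argument for (v) via the identity $G'G''=K^{\times}$ (unramified norm group times ramified norm group) is a clean uniform device that handles all values of $c/\xi$ at once and makes the case split on $v(c)$ unnecessary. The paper's hint suggests a more pedestrian case-by-case verification. Both approaches rest on exactly the same input, namely Lemme~\ref{facile}(iii)--(iv), so there is no genuine mathematical divergence---your packaging is simply tidier.
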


 \begin{proof}

Supposons d'abord que l'un de $a$, $b$, $ab$ est  un carr\'e dans $K$,
auquel cas on a automatiquement $E(K) \neq \emptyset$.
Si $a \notin K^{\times 2}$, alors soit $b$ soit $ab$ est dans $K^{\times 2}$
et alors $A = 0 \in \Br E$ d'apr\`es (\ref{Avia1}) et (\ref{Avia3}), donc
  pour tout $P  \in E(K )$, on a $A(P)=0 \in \Br K$.
Si  $a \in K^{\times 2}$, alors $A = (c,b)\in \Br E$
d'apr\`es (\ref{Avia3}),
donc
pour tout $P  \in E( K )$,
 on a $A(P)=(c,b) \in \Br K$. Si $c$ et $b$ sont de valuation paire,
 ceci implique $\partial (A(P))=0$. Ceci \'etablit (i) et (ii).

\medskip

Supposons que aucun  de $a$, $b$, $ab$ n'est  un carr\'e dans $K$.

Si  $R$ est hens\'elien et si les valuations de $a$ et de $b$ sont paires, alors, 
pour tout $P \in E( K)$, de $A = (X_{1}^2-aY_{1}^2,b)$, en utilisant le lemme \ref{facile} (ii),
on d\'eduit
 $\partial (A(P))=0$, c'est-\`a-dire l'\'enonc\'e (iii).

 Nous ne d\'etaillons pas ici les calculs faits pour \'etablir (iv) et (v), qui utilisent le lemme \ref{facile}.
 On doit dans chaque cas discuter selon la parit\'e de la valuation de~$c$.
\end{proof}

\subsection{Obstruction de r\'eciprocit\'e attach\'ee \`a l'alg\`ebre $A \in \Br E $ en un point ferm\'e d'un anneau local r\'egulier
de dimension 2}\label{calculobsrecip}

Soit $R$ un anneau local r\'egulier excellent de dimension 2, 
avec $2\in R^{\times}$.
Pour $\gamma$ un point de codimension 1 de $\Spec R$ on note aussi
$\gamma$ sa fermeture dans $\Spec R$ et on note encore $\gamma \in R^{(1)}$
 l'id\'eal premier de hauteur $1$ de $R$ associ\'e.

 Notons $M$ le point ferm\'e de $\Spec R$.
C'est aussi le point ferm\'e de $\Spec R/\gamma$ pour tout point $\gamma$ de
codimension 1 de $\Spec R$.
Soit $K$ le corps des fractions de $R$. 

On a un complexe
$$ {}_{2}\Br K   \to \oplus_{\gamma \in R^{(1)} }    \kappa(\gamma)^{\times}/ \kappa(\gamma)^{\times 2} \to \Z/2,$$
avec
$$\partial_{\gamma} : {}_{2}\Br K \to   \kappa(\gamma)^{\times}/ \kappa(\gamma)^{\times 2}$$
et
$$\partial_{\gamma,M} :  \kappa(\gamma)^{\times}/ \kappa(\gamma)^{\times 2} \to \Z/2.$$

Soient $a,b,c \in K^{\times}$.

Soit
$$
 E : (X_1^2 - aY_1^2)(X_2^2 - bY_2^2)(X_3^2 - abY_3^2) = c
 $$

et
$$A = (X_{1}^2-aY_{1}^2,b)= (c(X_{3}^2-abY_{3}^2),b)  = (c(X_{2}^2-bY_{2}^2),ab)  \in \Br E.$$

Supposons $E(K_{\gamma})  \neq \emptyset $ en tout point de codimension 1 de $\Spec R$.

On veut calculer la valeur des sommes
 $$
\sum_{\gamma \in R^{(1)} } \partial_{\gamma,M}(\partial_\gamma(A(P_\gamma)))$$
pour une famille $\{P_{\gamma} \in E(K_{\gamma})\}_{\gamma \in R^{(1)}}$.

Pour simplifier, pour $\alpha \in \Br K_{\gamma}$, on note  ici
$$\partial_{M}(\partial_\gamma(\alpha))  = \partial_{\gamma,M}(\partial_\gamma(\alpha)).$$

\begin{lem}\label{unestinversible}
 Si l'un de $a$, $b$ ou $ab$ est le produit d'une unit\'e de $R$
par un carr\'e de $K$, alors pour toute famille $\{P_{\gamma}\} \in \prod_{\gamma} E(K_{\gamma})$,
on a 
$$\sum_{\gamma \in R^{(1)} } \partial_M(\partial_\gamma(A(P_\gamma)))=0 \in \Z/2.$$
\end{lem}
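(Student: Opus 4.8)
The plan is to reduce to a single global manipulation of the Brauer class $A$ on $\Spec R$, using that $A$ becomes trivial (or unramified) in a way that is visible on all but finitely many $\gamma$. First I would treat the three cases $a$, $b$, $ab$ symmetrically, since by the three presentations (\ref{Avia1}), (\ref{Avia2}), (\ref{Avia3}) of $A$ the roles of $a$, $b$, $ab$ are interchangeable up to permuting the coordinates and replacing $c$ appropriately; so without loss of generality suppose $b = u w^2$ with $u \in R^\times$ and $w \in K^\times$. Then for every $\gamma \in R^{(1)}$ and every point $P \in E(K_\gamma)$ we have $A(P) = (X_1^2(P) - aY_1^2(P),\, b) = (\,\cdot\,, u) \in \Br K_\gamma$, i.e. $A(P)$ is a quaternion algebra whose second slot is the unit $u$.

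Next I would compute $\partial_\gamma(A(P))$ using the explicit residue formula recalled just before the lemma: the residue of $(f, u)$ at $\gamma$, with $u$ a $\gamma$-unit, equals $\overline{u}^{\,v_\gamma(f)} \in \kappa(\gamma)^\times/\kappa(\gamma)^{\times 2}$, i.e. it is $\overline{u}$ if $v_\gamma(f)$ is odd and trivial otherwise (here $f = X_1^2(P) - a Y_1^2(P)$). Since $v_\gamma(f)$ can be odd only for the finitely many $\gamma$ in the support of the divisor of $f$ together with the divisor of $a$ and of $c$, the family $\{\partial_\gamma(A(P_\gamma))\}$ has only finitely many nonzero terms, and each such term is the reduction of the \emph{same} global unit $u \in R^\times$. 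Now I would apply $\partial_{\gamma,M}$: by its definition (the residue of a class in $\kappa(\gamma)^\times/\kappa(\gamma)^{\times 2}$ at the closed point $M$ of the curve $\gamma \subset \Spec R$), it sends the class of a unit of the local ring $R_\gamma$ that is in fact a global unit $u \in R^\times$ to $0$, because $\overline{u}$ is a unit at $M$ on $\Spec R/\gamma$ as well. Hence each individual term $\partial_M(\partial_\gamma(A(P_\gamma)))$ already vanishes, and a fortiori the sum vanishes.

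The main obstacle — really the only delicate point — is the bookkeeping with the residue $\partial_{\gamma,M}$ when $u$ is merely a unit modulo squares rather than literally in $R^\times$: one must be careful that $b = uw^2$ with $u \in R^\times$ genuinely forces $A(P)$ to be represented in $\Br K_\gamma$ by a symbol with a unit entry, and that the normalisation/sign conventions $(-1)^{v(a)v(b)}$ in the residue formula do not introduce an extra $\pm 1$ here (they do not, since $v_\gamma(u) = 0$). I would also remark that $E(K_\gamma) \neq \emptyset$ is used only to guarantee that the evaluations $A(P_\gamma)$ make sense; the conclusion is completely insensitive to which point $P_\gamma$ is chosen, which is exactly the content one wants for the reciprocity obstruction attached to $A$ to vanish identically in this case. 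For the cases $a = u w^2$ or $ab = u w^2$ one uses respectively the presentations $A = (c(X_3^2 - abY_3^2), b)$ — whose residue computation is identical after noting $ab = a \cdot b$ and handling the unit $u$ coming from $a$ — and $A = (c(X_2^2 - bY_2^2), ab)$, again reducing the second slot to a global unit. This completes the proof.
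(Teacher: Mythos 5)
Your treatment of the two cases where $b$, resp.\ $ab$, is a unit of $R$ times a square is correct and coincides with the paper's argument: using the presentation $A=(X_1^2-aY_1^2,b)$, resp.\ $A=(X_1^2-aY_1^2,ab)$, the second slot of the symbol is a unit $u\in R^{\times}$, each residue $\partial_\gamma(A(P_\gamma))$ is a power of $\overline{u}$, and $\partial_{\gamma,M}$ kills it term by term since $\overline{u}$ stays a unit at $M$ on $\Spec R/\gamma$.

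The gap is in the case where $a$ is a unit times a square: the appeal to symmetry via (\ref{Avia1}), (\ref{Avia2}), (\ref{Avia3}) does not work as stated, because these presentations do not simply permute $a$, $b$, $ab$ in the second slot of a single symbol --- every presentation of $A$ whose second slot is $a$ carries an extra constant class, e.g.\ $A=(X_2^2-bY_2^2,a)+(c,ab)$. The presentation you actually invoke for this case, $A=(c(X_3^2-abY_3^2),b)$, has second slot $b$, which is not assumed to be a unit, so the ``identical residue computation'' does not apply to it. If one splits off the constant instead, the symbol $(X_2^2-bY_2^2,a)$ is indeed handled term by term as before, but the constant class $(c,ab)\in{}_2\Br K$ is in general ramified and its individual contributions $\partial_{\gamma,M}(\partial_\gamma(c,ab))$ need not vanish; only their sum over $\gamma$ vanishes, by the reciprocity law, i.e.\ the fact that $\sum_\gamma\partial_{\gamma,M}\circ\partial_\gamma$ annihilates classes coming from ${}_2\Br K$ (the complex of \S\ref{calculobsrecip} is a complex). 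Concretely, take $a=1$, $b=\pi$, $c=\delta$ with $(\pi,\delta)$ a regular system of parameters: then $A(P)=(c,b)=(\delta,\pi)$ for every $P$, and $\partial_{\pi,M}(\partial_\pi(\delta,\pi))=\partial_{\delta,M}(\partial_\delta(\delta,\pi))=1\in\Z/2$, so the individual terms are nonzero even though their sum is $0$. This global reciprocity step is exactly how the paper concludes in the case of $a$, and it is the one ingredient your proof omits; without it, the central claim that ``each individual term already vanishes'' is false.
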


\begin{proof}
Si  $b$  est le produit d'une unit\'e de $R$
par un carr\'e de $K$, on peut supposer que $b$ est une unit\'e dans $R$.
On a 
$$
\sum_{\gamma \in R^{(1)}} \partial_M(\partial_\gamma(A(P_\gamma))) = \sum_{\gamma \in {R^{(1)}} }\partial_M(\partial_\gamma(X_1^2 -
aY_1^2, b)(P_{\gamma}))=  \sum_{\gamma \in {R^{(1)}}} \partial_M((\overline{b})^{\nu_\gamma(X_1^2 - aY_1^2)(P_{\gamma})} ).
$$

Puisque $b$ est une unit\'e, la classe $\overline b$ est une unit\'e dans $R/\gamma$
 donc  $\partial_{M}(\overline{b})=0$, et
$$\sum_{\gamma \in {R^{(1)}}} \partial_M(\partial_\gamma(A(P_\gamma))) =0.$$

\medskip

Si $ab$ est le produit d'une unit\'e de $R$
par un carr\'e de $K$, on peut supposer que $ab$ est une unit\'e dans $R$.
En utilisant $A=(X_{1}^2-aY_{1}^2,ab)$, le m\^eme argument que ci-dessus donne
$$\sum_{\gamma \in {R^{(1)}}} \partial_M(\partial_\gamma(A(P_\gamma))) =0.$$
\medskip

Si $a$ est le produit d'une unit\'e de $R$
par un carr\'e de $K$, on peut supposer que $a$ est une unit\'e dans $R$,
le m\^eme argument que ci-dessus donne
$$\sum_{\gamma \in R^{(1)}} \partial_M(\partial_\gamma(A_{1}(P_\gamma))) =0$$
pour $A_{1}= (X_{2}^2-bY_{2}^2,a)$. Comme on a
$$A=A_{1} + (c,ab),$$
on obtient
 $$\sum_{\gamma \in R^{(1)}}    
 \partial_M(\partial_\gamma(A(P_\gamma)) =
\sum_{\gamma \in R^{(1)}} 
\partial_{M}(\partial_{\gamma}(c,ab)),$$
et le second terme est  nul par la loi de r\'eciprocit\'e
appliqu\'ee \`a $(c,ab) \in {}_2 \Br K$.
\end{proof}

\begin{lem}\label{carrelocallocal} 
Soit $R$ un anneau local r\'egulier de dimension 2 et $(\pi,\delta )$ un syst\`eme r\'egulier
de param\`etres. Si $z \in R$ non nul
a son diviseur support\'e sur la r\'eunion des diviseurs de  $\pi$ et de $\delta $, 
et si  c'est un carr\'e dans le corps des fractions du hens\'elis\'e  du localis\'e de $R$
le long de $\pi$, alors $z=u\pi^{2r}\delta^{2s} \in R$  avec $u\in R^{\times}$ d'image un carr\'e dans
$R/\pi$, donc d'image un carr\'e dans $R/m$. 
\end{lem}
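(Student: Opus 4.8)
The plan is to reduce everything to the factorial structure of $R$ together with the two discrete valuations attached to $\pi$ and to $\delta$. Since $R$ is a $2$-dimensional regular local ring it is a unique factorisation domain, and $\pi,\delta$ generate two distinct height-one primes (because $R/\pi$ and $R/\delta$ are discrete valuation rings), so $\pi$ and $\delta$ are prime elements, pairwise non-associate. The hypothesis that the divisor of $z$ is supported on the divisors of $\pi$ and $\delta$ then says exactly that $z=u\pi^{a}\delta^{b}$ for some integers $a,b\geq 0$ and $u:=z\pi^{-a}\delta^{-b}\in R^{\times}$, since this last element has trivial divisor on $\Spec R$.

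Next I would pass to the discrete valuation $v_{\pi}$ of $K$ defined by the DVR $R_{(\pi)}$, with henselisation $R^{h}_{(\pi)}$, fraction field $K_{\pi}\supseteq K$ and residue field $\kappa(\pi)=\mathrm{Frac}(R/\pi)$; the latter field is in turn discretely valued by $v_{\bar\delta}$, coming from the DVR $R/\pi$ whose uniformiser is the image $\bar\delta$ of $\delta$. As $\delta$ and $u$ are units at $(\pi)$ we have $v_{\pi}(z)=a$, so the assumption $z\in K_{\pi}^{\times 2}$ forces $a$ to be even, say $a=2r$. Then $w:=z\pi^{-2r}=u\delta^{b}$ lies in $R^{\times}_{(\pi)}$ and is a square in $K_{\pi}$; writing $w=f^{2}$ with $f\in K_{\pi}$ and observing $v_{\pi}(f)=0$, hence $f\in R^{h}_{(\pi)}$, we get that the residue $\bar w=\bar u\,\bar\delta^{\,b}$ is a square in $\kappa(\pi)$. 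Applying $v_{\bar\delta}$ and using that $\bar u$ is a unit in $R/\pi$ forces $b$ to be even, $b=2s$; dividing by the square $\bar\delta^{\,2s}$ then shows that $\bar u$ is a square in $\mathrm{Frac}(R/\pi)$, hence — $R/\pi$ being a DVR, so integrally closed — already a square in $R/\pi$. Reducing a square-root identity for $\bar u$ modulo $\bar\delta$ finally gives that the image of $u$ in $R/m=(R/\pi)/(\bar\delta)$ is a square, which is the assertion.

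I do not expect a genuine obstacle: all the steps above are one-line verifications. The one point worth isolating is the double use of the elementary principle that a unit of a discrete valuation ring which becomes a square in the fraction field is already a square in the ring — applied first to $R^{h}_{(\pi)}$ (to get that $\bar u\,\bar\delta^{\,b}$ is a square in $\kappa(\pi)$) and then to $R/\pi$ (to get that $\bar u$ is a square in $R/\pi$), each instance being just "a square has even valuation" plus integral closedness of the DVR. Henselianness of $R^{h}_{(\pi)}$ is needed only to keep the hypothesis in its stated, weakest form; were one to use it actively, one would instead lift a square root of $u$ from the residue field via Hensel's lemma for $X^{2}-u$ (legitimate since $2\in R^{\times}$), but this detour is unnecessary.
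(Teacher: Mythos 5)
Your proof is correct, and it is exactly the routine verification the authors had in mind: their own proof of this lemma is the single phrase \og C'est clair \fg. The two ingredients you isolate --- the factorisation $z=u\pi^{a}\delta^{b}$ coming from the UFD property of the regular local ring $R$ together with the primality of $\pi$ and $\delta$, and then the twice-repeated observation that a unit of a discrete valuation ring which is a square in the fraction field already has a square root of valuation zero in the ring (applied to $R^{h}_{(\pi)}$ and then to the DVR $R/\pi$ with uniformiser $\bar\delta$) --- are precisely what makes the statement \og clair \fg, and your remark that henselianness and $2\in R^{\times}$ are not actively needed is accurate.
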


\begin{proof}
C'est clair.
\end{proof}

\begin{prop}\label{partoutcarrelocal}
Si   la r\'eunion des diviseurs r\'eduits  de $a$ et de $b$
 est un diviseur \`a croisements normaux, et si en tout $\gamma \in R^{(1)} $ l'un de $a$, $b$ ou $ab$
est un carr\'e dans $K_{\gamma}$,
alors pour toute famille $\{P_{\gamma}\} \in \prod_{\gamma  \in R^{(1)}} E(K_{\gamma})$,
on a 
$$\sum_{\gamma \in R^{(1)} } \partial_M(\partial_\gamma(A(P_\gamma)))=0.$$
\end{prop}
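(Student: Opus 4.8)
The plan is to show that the two hypotheses, taken together, are far more restrictive than they look: they force one of $a$, $b$, $ab$ to be the product of a unit of $R$ by a square of $K^\times$, after which Lemma~\ref{unestinversible} applies verbatim.

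First I would exploit the normal crossings hypothesis. In a $2$\nobreakdash-dimensional regular local ring a reduced normal crossings divisor has at most two branches through the closed point, hence at most two components, which can be taken to be part of a regular system of parameters $\pi,\delta$; thus the union of the reduced divisors of $a$ and of $b$ is contained in the divisor of $\pi\delta$. Multiplying $a$ and $b$ by suitable squares of $K^\times$ supported on $\{\pi\delta=0\}$ --- an operation that preserves both hypotheses and does not change any of the evaluations $A(P_\gamma)$, since the resulting varieties $E$ are $K$\nobreakdash-isomorphic compatibly with $A$ and with the $K_\gamma$\nobreakdash-points --- I may then assume $a=u\,\pi^{e_1}\delta^{e_2}$ and $b=w\,\pi^{f_1}\delta^{f_2}$ with $u,w\in R^\times$ and $e_i,f_i\in\N$.

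The key step is then the claim that one of the three pairs of exponents, namely $(e_1,e_2)$, $(f_1,f_2)$, $(e_1+f_1,e_2+f_2)$, is even in both coordinates; equivalently, that one of $a$, $b$, $ab$ is a unit of $R$ times a square of $K^\times$. I would argue at the height-one prime $\gamma=(\pi)$: the orders of $a$, $b$, $ab$ there are $e_1$, $f_1$, $e_1+f_1$, and among three integers $x,y,x+y$ either exactly one is even or all three are. If exactly one is even --- say it is the order of $a$, the two remaining cases being symmetric by the symmetry of the equation and the presentations \eqref{Avia1}--\eqref{Avia3} of $A$ --- then $b$ and $ab$ are ramified at $(\pi)$ and so are not squares in $K_{(\pi)}$, whence the hypothesis at $(\pi)$ forces $a$ to be a square in $K_{(\pi)}$; Lemma~\ref{carrelocallocal} then gives $a=u'\pi^{2r}\delta^{2s}$ with $u'\in R^\times$, which is what we want. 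If instead $e_1$, $f_1$, $e_1+f_1$ are all even, I repeat the argument at $\gamma=(\delta)$: if $e_2$, $f_2$, $e_2+f_2$ are all even too, then all of $a$, $b$, $ab$ are units times squares; otherwise exactly one of them is even, and Lemma~\ref{carrelocallocal} with $\pi$ and $\delta$ interchanged again exhibits the corresponding member of $\{a,b,ab\}$ as a unit times a square. This proves the claim, and Lemma~\ref{unestinversible} then gives $\sum_{\gamma\in R^{(1)}}\partial_M(\partial_\gamma(A(P_\gamma)))=0$.

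I expect the main obstacle to be recognizing that this is the right strategy at all. A priori one is tempted to keep the configuration of $a$ and $b$ genuinely ramified, to track the point-dependent classes $A(P_\gamma)\in\Br K_\gamma$ --- which by \eqref{Avia1}--\eqref{Avia3} equal $0$ or $(c,b)$ according to which of $a$, $b$, $ab$ is a square in $K_\gamma$ --- and to cancel the resulting residues using reciprocity over $R$ for the three fixed quaternion classes $(c,a)$, $(c,b)$, $(c,ab)$; but because everything is $2$\nobreakdash-torsion, this bookkeeping is self-defeating, yielding only trivial identities of the form $2\Sigma=0$. What makes the argument work is instead the rigidity of normal crossings in dimension $2$, through the parity analysis above; once that is in hand, the verification of the symmetric cases and the invocation of Lemma~\ref{carrelocallocal} are routine.
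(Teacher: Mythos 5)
Your proof is correct and follows exactly the route the paper intends: its own proof is the one-line ``Cela r\'esulte des lemmes \ref{unestinversible} et \ref{carrelocallocal}'', and your parity analysis at $(\pi)$ and $(\delta)$ is precisely the missing glue showing that the two hypotheses force one of $a$, $b$, $ab$ to be a unit of $R$ times a square of $K$, after which Lemme \ref{unestinversible} concludes.
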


\begin{proof}
Cela r\'esulte des lemmes \ref{unestinversible}  et \ref{carrelocallocal}.
\end{proof}

\begin{prop}\label{toutZmod2}
Supposons que  la r\'eunion des diviseurs r\'eduits  de $a$ et de $b$
et de $c$
est contenue dans un diviseur \`a croisements normaux d\'efini
par un syst\`eme r\'egulier de param\`etres $(\pi,\delta )$.

On a alors les propri\'et\'es suivantes.

(i)  Pour toute famille $\{P_{\gamma  }\} \in \prod_{\gamma  \in R^{(1)}} E(K_{\gamma})$,
$$
\sum_{\gamma \in {R^{(1)}}}  \partial_M(\partial_\gamma(A(P_\gamma)))
= \partial_M(\partial_\pi(A(P_\pi))) + \partial_M(\partial_\delta(A(P_\delta))) \in \Z/2.
$$

Supposons qu'aucun de $a$, $b$  ou $ab$ 
n'est le produit d'une unit\'e de $R$ par un carr\'e dans $K$.

(ii) Si  le corps des fractions $\kappa(\pi)$ de $R/\pi$
satisfait 
$\cd_{2}  \kappa(\pi ) \leq 1$, alors
pour $P_{\pi}$ variant dans $E(K_{\pi})$,
 $\partial_{M}(\partial_\pi(A(P_\pi)))$ prend les deux valeurs dans $\Z/2$.
 
 (iii)  Si  le corps des fractions  
 $\kappa(\delta)$ de $R/\delta$ satisfait 
$\cd_{2}  \kappa(\delta ) \leq 1$, alors
pour $P_{\delta }$ variant dans $E(K_{\delta})$,
$\partial_{M}( \partial_\delta (A(P_{\delta} )))$ prend les deux valeurs dans $\Z/2$.

\end{prop}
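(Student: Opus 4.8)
For part (i), the point is simply that in the Bloch–Ogus complex attached to $\Spec R$, the residue $\partial_\gamma$ of a class in ${}_2\Br K$ depends only on the divisor along which one localizes; since the divisors of $a$, $b$ and $c$ are all supported on $\{\pi=0\}\cup\{\delta=0\}$, for any $\gamma\in R^{(1)}$ distinct from $\pi$ and $\delta$ the algebra $A(P_\gamma)=(X_1^2-aY_1^2,b)(P_\gamma)$ has coefficients that are units along $\gamma$ (the only "impure" directions being $\pi$ and $\delta$), hence $\partial_\gamma(A(P_\gamma))$ is a unit class in $\kappa(\gamma)^\times/\kappa(\gamma)^{\times 2}$, and applying $\partial_{\gamma,M}$ to a unit gives $0$. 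I would make this precise using Lemma~\ref{carrelocallocal}: at such a $\gamma$ one of $a,b,ab$ is a square in $K_\gamma$, so by Lemma~\ref{unestinversible}'s local computation the contribution vanishes. Thus the only surviving terms in $\sum_{\gamma}\partial_M(\partial_\gamma(A(P_\gamma)))$ are those at $\gamma=\pi$ and $\gamma=\delta$, which is exactly (i).

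**Parts (ii) and (iii).** These are symmetric, so it suffices to treat (ii). Here I would localize at $\pi$: let $R_\pi$ be the localization of $R$ at the height-one prime $(\pi)$, a discrete valuation ring with residue field $\kappa(\pi)$, and pass to its henselization, whose fraction field is $K_\pi$. By hypothesis $\cd_2\kappa(\pi)\le 1$, so we are exactly in the situation of Proposition~\ref{valeursdeA}(v): since none of $a,b,ab$ is a unit-times-square (in particular none is a square in $K_\pi$) and the valuations $v(a),v(b)$ along $\pi$ are not both even — this is forced by the normal-crossings hypothesis, because if both were even then, $a$ and $b$ being supported on $\pi$ and $\delta$, one of $a,b,ab$ would be a unit times a square, contradicting our assumption — Proposition~\ref{valeursdeA}(v) tells us that as $P_\pi$ ranges over $E(K_\pi)$, the residue $\partial_\pi(A(P_\pi))\in\kappa(\pi)^\times/\kappa(\pi)^{\times 2}$ takes all the values in the asserted two-element set (a union of two classes, possibly equal, differing by $(-1)^{v(a)v(b)}\partial((a,b))$). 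The final step is to check that these two classes have \emph{different} images under $\partial_{\pi,M}\colon\kappa(\pi)^\times/\kappa(\pi)^{\times 2}\to\Z/2$, i.e. that their ratio $(-1)^{v(a)v(b)}\partial_\pi((a,b))$ has odd valuation at the closed point $M$ of $\Spec R/\pi$; this follows because, in the normal-crossings coordinates $(\pi,\delta)$, one reads off $\partial_\pi((a,b))$ as a power of $\bar\delta$ whose exponent is $v_\pi(a)v_\delta(b)-v_\pi(b)v_\delta(a)$ modulo $2$, and the hypothesis that $v_\pi(a),v_\pi(b)$ are not both even together with $a,b,ab$ not being units-times-squares forces this exponent to be odd. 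Hence $\partial_M(\partial_\pi(A(P_\pi)))$ takes both values in $\Z/2$ as $P_\pi$ varies, which is (ii); (iii) is identical with the roles of $\pi$ and $\delta$ exchanged.

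**Where the difficulty lies.** The routine part is (i) and the invocation of Proposition~\ref{valeursdeA}(v). The genuinely delicate bookkeeping is the parity argument at the end of (ii): one must translate the hypotheses "normal crossings along $(\pi,\delta)$" and "none of $a,b,ab$ is a unit times a square" into the precise statement that the two cosets produced by Proposition~\ref{valeursdeA}(v) are \emph{not} collapsed by the second residue $\partial_{\pi,M}$. This amounts to a careful case analysis on the four parities $(v_\pi(a),v_\delta(a),v_\pi(b),v_\delta(b))$, using the formula for the residue of a quaternion symbol recalled in \S\ref{valeursvaldisc}; in each case one checks that the discriminant-type quantity $v_\pi(a)v_\delta(b)-v_\pi(b)v_\delta(a)$ is odd. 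I expect this case analysis, rather than any conceptual obstacle, to be the main thing to get right.
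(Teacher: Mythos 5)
Your overall strategy coincides with the paper's: only $\pi$ and $\delta$ contribute to the sum, and at each of these one invokes Proposition \ref{valeursdeA} (iv)--(v) together with the observation that the two possible residue classes differ by an element of odd valuation at $M$. Parts (ii) and (iii) of your argument are correct and essentially identical to the paper's proof: your parity count $v_\pi(a)v_\delta(b)-v_\pi(b)v_\delta(a)\equiv 1 \pmod 2$ is exactly what the paper obtains after normalizing $(a,b)$ to one of $(\pi,\delta)$, $(\pi,\pi\delta)$, $(\pi\delta,\delta)$, where the two cosets differ by $\pm\bar\delta$ (resp. $\pm\bar\pi$), of odd valuation at $M$. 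Your preliminary observation that ``none of $a,b,ab$ is a unit times a square'' forces $v_\pi(a),v_\pi(b)$ not both to be even is also correct and is implicitly used by the paper in this normalization.

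The justification of (i), however, has a flaw. For $\gamma\neq\pi,\delta$ you argue that $\partial_\gamma(A(P_\gamma))$ is ``a unit class'' and that $\partial_{\gamma,M}$ kills unit classes. But the relevant unit is $b$, which is a unit \emph{along} $\gamma$ (i.e. $v_\gamma(b)=0$), while its image $\bar b$ in $\kappa(\gamma)$ can perfectly well vanish at the closed point $M$ of the curve $\overline{\{\gamma\}}$: take $b=\pi$ and $\gamma$ a curve through $M$ other than $\{\pi=0\}$ and $\{\delta=0\}$; then $\partial_{\gamma,M}(\bar\pi)$ is an intersection multiplicity which may be odd. Likewise the claim that ``one of $a,b,ab$ is a square in $K_\gamma$'' at such a $\gamma$ is unjustified --- Lemma \ref{carrelocallocal} gives no such thing, and the claim is false in general (a non-square unit stays a non-square), and Lemma \ref{unestinversible} is a global statement, not a statement at a single $\gamma$. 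The correct argument, which is the one the paper gives, is that the residue $\partial_\gamma(A(P_\gamma))$ is itself trivial: since $v_\gamma(a)=v_\gamma(b)=v_\gamma(c)=0$, Proposition \ref{valeursdeA} (resting on Lemma \ref{facile} (ii), which makes $v_\gamma\bigl((X_1^2-aY_1^2)(P_\gamma)\bigr)$ even when $a$ is a non-square unit) shows $\partial_\gamma(A(P_\gamma))=0$ for every $P_\gamma\in E(K_\gamma)$. With this repair, part (i), and hence the whole proposition, goes through.
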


\begin{proof}
 
 Soit $\gamma$ un point de codimension 1 de $R$ autre que $\pi$ et $\delta $.
 Soit $K_{\gamma}$ le corps des fractions du hens\'elis\'e de $R$ en $\gamma$.
 En un tel point, $v_{\gamma}(a)=0$, $v_{\gamma}(b)=0$ et $v_{\gamma}(c)=0$. 
 La proposition \ref{valeursdeA} montre alors  que pour tout $P_{\gamma} \in E(K_{\gamma})$,  
on  a  $\partial_{\gamma}(A(P_{\gamma}))=0$. Ceci donne l'\'enonc\'e (i).

 Si  aucun de $a$, $b$  ou $ab$ 
n'est le produit d'une unit\'e de $R$ par un carr\'e dans $K$,
 quitte \`a se d\'ebarrasser de carr\'es et \`a modifier le syst\`eme r\'egulier
de param\`etres en les multipliant par des unit\'es, 
on peut supposer que  l'on est dans l'un des cas suivants  :
$(a,b)=(\pi,\delta )$, ou 
$(a,b)=(\pi,\pi\delta )$, ou
$(a,b) =(\pi\delta ,\delta)$, 
et   $c=u\pi^r\delta ^s$ avec $u\in R^{\times}$ et $r,s \in \Z$.

Supposons que le corps des fractions $\kappa(\pi)$ de $R/\pi$ satisfait 
$\cd_{2}  \kappa(\pi ) \leq 1$.
La proposition \ref{valeursdeA}  appliqu\'ee
  au hens\'elis\'e de l'anneau de valuation discr\`ete $R_{\pi}$  
montre que
$ \partial_\pi(A(P_\pi))$, pour $P_{\pi}$ variant dans $E(K_{\pi})$,
prend deux valeurs dans $\kappa(\pi)^{\times}/\kappa(\pi)^{\times 2}$
qui diff\`erent par multiplication par $\pm \overline {\delta } \neq 1$.
Ainsi $\partial_{M}( \partial_\pi(A(P_\pi))) \inÊ\Z/2$,
pour $P_{\pi}$ variant dans $E(K_{\pi})$,
prend les deux valeurs dans $\Z/2$. Ceci donne l'\'enonc\'e (ii),
et la d\'emonstration de (iii) est analogue.

 \end{proof}

\subsection{Comportement dans un \'eclatement}\label{eclatement}

Cette section
sera utilis\'ee au paragraphe \ref{paradescente}.

\begin{prop}\label{eclat}
Soit  $R$ un anneau  local r\'egulier 
 int\`egre de dimension~2, de corps des fonctions $K$,
de corps r\'esiduel $k$ s\'eparablement clos, de caract\'eristique diff\'erente de 2.
Soit $p = \mathcal Y \to {\mathcal  X} $ l'\'eclatement de  ${\mathcal  X} = \Spec R$ en le point ferm\'e $M$ de  ${\mathcal  X} $
et soit $\lambda \in {\mathcal Y}^{(1)}$ la courbe exceptionnelle introduite dans l'\'eclatement.
 
Soient $a,b,c \in K^{\times}$, puis
$$
 E : (X_1^2 - aY_1^2)(X_2^2 - bY_2^2)(X_3^2 - abY_3^2) = c
 $$
et
$A = (X_{1}^2-aY_{1}^2,b)  \in \Br E.$

Supposons   que  la r\'eunion des diviseurs r\'eduits  de $a$ et de $b$ et de $c$
est un diviseur \`a croisements normaux sur $\Spec R$.
 
Soit $\{P_{\gamma} \in E(K_{\gamma}) \}_{ \gamma \in {\mathcal X}^{(1)}}$ une famille de points telle que 
 $$
\sum_{\gamma \in {\mathcal X}^{(1)}} \partial_M(\partial_\gamma(A(P_\gamma)))=0 \in \Z/2.$$
 
 Il existe alors un point $P_{\lambda} \in E(K_{\lambda})$ tel que  pour tout point ferm\'e
 $N \in   {\mathcal Y}$ on ait 
 $$
\sum_{\zeta \in  {\mathcal Y}^{(1)}, \ N \in \zeta} \partial_N(\partial_\zeta(A(P_\zeta)))=0 \in \Z/2,$$
o\`u, pour $\zeta \in {\mathcal Y}^{(1)}, \zeta\neq \lambda,$ on note $P_{\zeta}=P_{p(\zeta)}$.
\end{prop}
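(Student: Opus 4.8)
The statement concerns the behaviour of the reciprocity obstruction attached to $A=(X_1^2-aY_1^2,b)$ under a single blow-up $p:\mathcal Y\to\mathcal X=\Spec R$ of the closed point $M$, when the divisor of $abc$ is a normal crossings divisor. The idea is to reduce everything to an explicit computation in the local ring $\mathcal O_{\mathcal Y,N}$ at each closed point $N$ of $\mathcal Y$, using the fact that on the complement of the finitely many special curves, all residues $\partial_\zeta(A(P_\zeta))$ vanish. After a unit-and-square change of the regular system of parameters $(\pi,\delta)$ of $R$, I may assume we are in one of the normalized cases $(a,b)=(\pi,\delta)$, $(\pi,\pi\delta)$, $(\pi\delta,\delta)$, with $c=u\pi^r\delta^s$, $u\in R^\times$; by Lemma~\ref{unestinversible} and Proposition~\ref{partoutcarrelocal} these are the only cases that can produce a nonzero obstruction, so the case analysis is finite.

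First I would describe the geometry of $\mathcal Y$ explicitly. The exceptional curve $\lambda\cong\mathbb P^1_k$ meets the strict transforms $\tilde\pi,\tilde\delta$ of $\{\pi=0\},\{\delta=0\}$ at two distinct points, say $N_\pi=\lambda\cap\tilde\pi$ and $N_\delta=\lambda\cap\tilde\delta$; every other closed point $N\in\mathcal Y$ lies on at most one of $\lambda,\tilde\pi,\tilde\delta$ and away from the support of $abc$ one checks (Lemma~\ref{carrelocallocal}, applied on $\mathcal Y$) that one of $a,b,ab$ is a square in $K_\zeta$ for the relevant $\zeta$, so $\partial_\zeta(A(P_\zeta))=0$ there; thus $\sum_{N\in\zeta}\partial_N(\partial_\zeta(A(P_\zeta)))=0$ automatically for all $N$ except possibly $N_\pi$ and $N_\delta$. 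So the whole content is to choose a single $P_\lambda\in E(K_\lambda)$ making the two remaining sums vanish simultaneously. Concretely, in a chart of $\mathcal Y$ near $N_\pi$ the local ring is regular with parameters $(\pi',\delta)$ where $\pi=\pi'\delta$ (or $\pi=\pi'$, $\delta=\delta'\pi'$ in the other chart near $N_\delta$), and one rewrites $a,b,c$ in these parameters; the valuations $v_\lambda(a),v_\lambda(b),v_\lambda(c)$ along $\lambda$ get the contributions from both $\pi$ and $\delta$, which is why the blow-up changes parities.

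Next I would carry out the evaluation. Using Proposition~\ref{valeursdeA}(iv)--(v) applied to the henselization of the discrete valuation ring $\mathcal O_{\mathcal Y,\lambda}$, for $P_\lambda$ varying over $E(K_\lambda)$ the class $\partial_\lambda(A(P_\lambda))$ ranges over a union of (at most two) cosets in $\kappa(\lambda)^\times/\kappa(\lambda)^{\times2}=k(t)^\times/k(t)^{\times2}$, differing by $(-1)^{v_\lambda(a)v_\lambda(b)}\partial((a,b))$; since $k$ is separably closed this is a rational function on $\mathbb P^1_k$ supported at $N_\pi,N_\delta,\infty$, and I get fine control of the residues $\partial_{N_\pi}$ and $\partial_{N_\delta}$ of this function by taking $\partial_{N_\pi,N_\delta}\circ\partial_\lambda$. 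The point is that when $\partial((a,b))\neq0$ the image of $P_\lambda\mapsto(\partial_{N_\pi}(\partial_\lambda(A(P_\lambda))),\partial_{N_\delta}(\partial_\lambda(A(P_\lambda))))$ in $(\Z/2)^2$ runs over a full coset of a diagonal-type subgroup, and when $\partial((a,b))=0$ the relevant residues are forced. In each of these situations the hypothesis $\sum_{\gamma\in\mathcal X^{(1)}}\partial_M(\partial_\gamma(A(P_\gamma)))=0$ on $\mathcal X$ — which, by Proposition~\ref{toutZmod2}(i) plus the reciprocity law on $R/\pi$ and on $R/\delta$, records exactly the compatibility of the prescribed residues at $M$ — translates, via the functoriality of the Bloch--Ogus complex under the proper birational map $p$ (the remark following Proposition~\ref{obstrucsup}), into the statement that the \emph{sum} $\partial_{N_\pi}(\cdots)+\partial_{N_\delta}(\cdots)$ of the two obstructions on $\mathcal Y$ equals $0$; but I need each of them to vanish separately, and I have exactly one free parameter ($P_\lambda$) to kill one of them, after which the other is forced to $0$ by the sum relation.

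\textbf{Main obstacle.} The delicate point is the last sentence: it is not obvious a priori that the same choice of $P_\lambda$ can be made compatibly with the already-fixed points $P_\pi=P_{\tilde\pi}$ and $P_\delta=P_{\tilde\delta}$ at the two triple points $N_\pi,N_\delta$. What saves it is that at $N_\pi$ the obstruction $\partial_{N_\pi}(\partial_{\tilde\pi}(A(P_\pi)))+\partial_{N_\pi}(\partial_\lambda(A(P_\lambda)))$ depends on $P_\lambda$ only through $\partial_\lambda(A(P_\lambda))$ evaluated near $N_\pi$, and by Proposition~\ref{valeursdeA}(v) (which needs $\cd_2\kappa(\lambda)\le1$, automatic since $k$ is separably closed and $\kappa(\lambda)=k(t)$) \emph{both} values in $\Z/2$ are attained there; so I can choose $P_\lambda$ to annihilate the $N_\pi$-sum, and then the global $\mathcal X$-hypothesis together with Weil reciprocity on $\lambda\cong\mathbb P^1_k$ forces the $N_\delta$-sum to vanish too. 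I expect the bookkeeping of which normalized case $(a,b)$ we are in, and keeping track of how the parities $v_\lambda(a),v_\lambda(b),v_\lambda(c)$ are computed from those along $\pi,\delta$, to be the only real work; the structural input is entirely Propositions~\ref{valeursdeA} and~\ref{toutZmod2} plus functoriality of Bloch--Ogus.
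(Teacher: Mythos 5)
Votre plan est correct et suit essentiellement la démonstration du texte : réduction aux deux points $N_{1}=N_{\pi}$ et $N_{2}=N_{\delta}$ après normalisation de $(a,b)$ en $(\pi,\delta)$, $(\pi,\pi\delta)$ ou $(\pi\delta,\delta)$, choix de $P_{\lambda}$ tuant l'obstruction en $N_{1}$ grâce à la proposition \ref{valeursdeA} (v) (via $\cd_{2}\kappa(\lambda)\leq 1$), puis annulation forcée en $N_{2}$ par la loi de réciprocité de Weil sur $\lambda\simeq\P^{1}_{k}$ combinée à l'hypothèse $\partial_{M}(\partial_{\pi}(A(P_{\pi})))=\partial_{M}(\partial_{\delta}(A(P_{\delta})))$. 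C'est exactement l'argument du texte, à la présentation près.
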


\begin{proof}
 Il suffit d'\'etablir l'\'enonc\'e pour les
points $N \in \lambda$. Toutes les \'egalit\'es qui suivent sont dans $\Z/2$.

Par hypoth\`ese, la  r\'eunion des diviseurs r\'eduits  de $a$ et de $b$ et de $c$
est un diviseur \`a croisements normaux sur $\Spec R$.
 On choisit 
 $(\pi,\delta ) \in R $ des g\'en\'erateurs de l'id\'eal maximal de $R$
tels que chacun de $a$ et $b$
 soit produit d'une unit\'e en $M$
par des puissances de $\pi$ et de $\delta $.

Soit $S_{\lambda}$ le hens\'elis\'e de l'anneau local de $\mathcal Y$ en $\lambda$ et $K_{\lambda}$
l'anneau des fractions de $S_{\lambda}$. Le corps r\'esiduel de 
$S_{\lambda}$ est $k({\mathbf P}^1)$ qui satisfait $\cd_{2} k({\mathbf P}^1) \leq 1$
 par l'hypoth\`ese faite sur $k$.

L'\'eclat\'e au voisinage de $M$ est
obtenu par recollement de $\Spec R[t]/(\pi-t\delta  )$ et $\Spec R[s]/(\delta -s\pi)$.
Dans la premi\`ere carte, $\lambda$ est donn\'e par $\delta=0$,
dans la seconde par $\pi=0$.

Si l'un de $a$, $b$ ou $ab$ est une unit\'e fois un carr\'e en $M$,
alors, comme $k$ est s\'eparablement clos, c'est un carr\'e dans $S_{\lambda}$, 
donc $E(K_{\lambda}) \neq \emptyset$.
Par ailleurs, sous cette m\^eme hypoth\`ese, en tout point $N \in \lambda$,  l'un de 
$a$, $b$ ou $ab$ est   une unit\'e fois un carr\'e en $N$,
le lemme \ref{unestinversible} montre
que pour tout point $N \in \lambda$ et 
pour toute famille    de points $\{P_{\zeta} \in E(K_{\zeta})\}_{ \zeta \in  {\mathcal Y}^{(1)}}$ on a
$$
\sum_{\zeta \in  {\mathcal Y}^{(1)}, N \in \zeta} \partial_N(\partial_\zeta(A(P_\zeta)))=0.$$

Si aucun de $a$, $b$ ou $ab$ n'est une unit\'e fois un carr\'e en $M$,
 on peut supposer que l'on a : $(a,b)=(\pi,\delta )$
ou  $(a,b)=(\pi,\pi\delta )$ ou $(a,b)=(\pi\delta ,\delta)$.
Dans la premi\`ere carte,  le couple
$(a,b)$ est, \`a multiplication par des carr\'es pr\`es, \'egal 
\`a l'un de $(t\delta ,\delta ), (t\delta ,t), (t,\delta )$. L'une des coordonn\'ees
$a, b$ a une $\delta$-valuation impaire.
La proposition \ref{pointslocaux} donne  $E(K_{\lambda}) \neq \emptyset$.

Pour $\gamma$ passant par $M$ et diff\'erent de $\pi$ ou $\delta $, 
$ a$, $b$ et $c$  sont des unit\'es dans $R_{\gamma}$, ce qui d'apr\`es
la proposition \ref{valeursdeA} implique
  $\partial _{\gamma}(A(P_{\gamma}))=0$.

L'hypoth\`ese
 $$
\sum_{\gamma \in {\mathcal X}^{(1)}, M \in \gamma } \partial_M(\partial_\gamma(A(P_\gamma)))=0$$
implique alors
\begin{equation}\label{(*)}
\partial_M(\partial_\pi(A(P_\pi)))= \partial_M(\partial_\delta (A(P_\delta ))). 
\end{equation}

Soient $N_{1}$, resp. $N_{2}$, le point  de $\lambda \subset \mathcal{Y}$ 
o\`u le transform\'e propre de $\pi=0$ (donn\'e par $t=0$), resp.  de $\delta =0$ (donn\'e par $s=0$),
rencontre $\lambda$.

Il nous faut montrer qu'il existe $P_{\lambda} \in E(K_{\lambda})$ tel que l'on ait
$$\partial_{N_{1}}(\partial_{\lambda}(A(P_{\lambda})))= \partial_{N_{1}}(\partial_{\pi}(A(P_{\pi}))) $$
et
$$\partial_{N_{2}}(\partial_{\lambda}(A(P_{\lambda})))=  \partial_{N_{2}}(\partial_{\delta}(A(P_{\delta}))) $$
et enfin
 $$\partial_{N}(\partial_{\lambda}(A(P_{\lambda})))=0  $$
pour $N \in \lambda$, $N \neq N_{1}, N_{2}$.

On  a
\begin{equation}\label{(*a)}
\partial_{N_{1}}(\partial_{\pi}(A(P_{\pi}))) = \partial_{M}(\partial_{\pi}(A(P_{\pi})))
\end{equation}
et
\begin{equation}\label{(*b)}
\partial_{N_{2}}(\partial_{\delta}(A(P_{\delta})))=\partial_{M}(\partial_{\delta }(A(P_{\delta}))).
\end{equation}

La proposition \ref{valeursdeA} et
les hypoth\`eses sur $a$, $b$, $c$ impliquent 
   que   pour tout point $P_{\lambda} \in E(K_{\lambda}) $,    $\partial_{\lambda}(A(P_{\lambda}))$ 
est   de la forme $\partial_{\lambda}((u,v))$
avec chacun de $u,v$ produit d'une unit\'e en $M$ et de puissances de $\pi$ 
et de $\delta $.  
Une consid\'eration d'une des cartes locales de $\mathcal Y$ 
 montre imm\'ediatement
que pour une telle alg\`ebre de quaternions $(u,v)$ on~a 
$$\partial_{N}(\partial_{\lambda}(u,v))=0$$
pour $N \in \lambda$, $N  \neq N_{1}, N_{2}$. Ainsi, quel que soit le choix de $P_{\lambda}$, on~a
$$\partial_{N}( \partial_{\lambda} (A(P_{\lambda}) ))=0$$
pour $N \in \lambda$, $N \neq N_{1}, N_{2}$.

La loi de r\'eciprocit\'e sur la courbe $\lambda$  et l'\'el\'ement $\partial_{\lambda}(A(P_{\lambda}))  \in 
\kappa(\lambda)^{\times}/\kappa(\lambda)^{\times 2}$
donne $$\sum_{N \in \lambda} \partial_{N} (\partial_{\lambda} (A(P_{\lambda}) ))=0.$$

On a donc pour tout $P_{\lambda}$
\begin{equation}\label{(**)}
\partial_{N_{1}}(\partial_{\lambda} (A(P_{\lambda}) )) + \partial_{N_{2}} (\partial_{\lambda} (A(P_{\lambda}) ))=0.
\end{equation}

 Dans la premi\`ere carte de $\mathcal Y$, o\`u $\lambda$ est donn\'e par $\delta =0$,
 sur lequel $t =0$ (transform\'e propre du diviseur $\pi=0$ sur $\mathcal X$)
 d\'ecoupe le diviseur $N_{1}$
on trouve,   \`a multiplication par des carr\'es pr\`es,  l'\'egalit\'e de couples 
$(a,b)=(t\delta,\delta )$ 
ou $(a,b)=(t\delta, t )$  ou $(a,b)=(t,\delta  )$
et l'id\'eal maximal dans l'anneau local au point $N_{1}$  est engendr\'e
par $(t,\delta)$.

 Comme le corps r\'esiduel $\kappa(\lambda)$ 
  de $S_{\lambda}$ satisfait $\cd_{2}\kappa(\lambda) \leq 1$,
  la proposition \ref{valeursdeA}  
   appliqu\'ee au voisinage
du  point $N_{1}$ de $\mathcal Y$ 
  montre 
  que l'on peut trouver $P_{\lambda} \in E(K_{\lambda})$ tel que
  \begin{equation}\label{(***)}
\partial_{N_{1}}(\partial_{\lambda} (A(P_{\lambda}) ))  =\partial_{N_{1}}(\partial_{\pi}(A(P_{\pi}))).  
\end{equation}

Les \'egalit\'es (\ref{(*)}), (\ref{(*a)}),  (\ref{(*b)}),    (\ref{(**)})  et (\ref{(***)})
impliquent alors  
$$\partial_{N_{2}} (\partial_{\lambda} (A(P_{\lambda}) )) =  \partial_{N_{2}}(\partial_{\delta }(A(P_{\delta})))   .$$
\end{proof}

\section{Le principe local-global ne vaut pas en g\'en\'eral}\label{coronidisloco}

 \subsection{Un \'enonc\'e semi-local}
 
\begin{prop}\label{semilocal}
Soit $R$ un anneau semi-local r\'egulier  int\`egre de dimension $2$,
avec $2 \in R^{\times}$. Supposons que $R$ a exactement trois
id\'eaux maximaux $m_1, m_2,
 m_3$, et que $m_1 = (\pi_2, \pi_3)$,
 $m_2 = (\pi_1, \pi_3)$, $m_3 = (\pi_1, \pi_2)$,
chacun des $(\pi_{i})$  \'etant un id\'eal premier.
Soit $K$ le corps des fractions de $R$.
Soient  $a = \pi_2\pi_3$, $b = \pi_3\pi_1$ 
et $ c = \pi_1\pi_2\pi_3$.   Alors la $K$-vari\'et\'e 
d'\'equation 
$$
 (X_1^2 - aY_1^2)(X_2^2 - bY_2^2)(X_3^2 - abY_3^2) = c
 $$
 n'a pas de point rationnel sur $K$.
\end{prop}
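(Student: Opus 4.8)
\medskip

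The plan is to exhibit a nonvanishing instance of the higher reciprocity obstruction of Proposition \ref{obstrucsup}, attached to the quaternion class
$$A=(X_{1}^{2}-aY_{1}^{2},b)\in\Br E\subset{}_{2}\Br K(E),$$
on the scheme $\mathcal X=\Spec R$. Suppose, for contradiction, that $E(K)\neq\emptyset$ and fix $P\in E(K)$; since the three factors of the defining equation of $E$ are invertible on $E$, the class $A$ evaluates to a well-defined element $\alpha:=A(P)\in{}_{2}\Br K=H^{2}(K,\mu_{2})$. As $\alpha$ lies in $H^{2}(K,\mu_{2})$, the family of its residues $(\partial_{\gamma}(\alpha))_{\gamma\in R^{(1)}}$ is killed by the second differential of the Bloch--Ogus--Kato complex $C_{2,2}$ over $\Spec R$ (\S\ref{BOK}; equivalently, this is Proposition \ref{obstrucsup}(iv) for $r=2$, $i=1$, applied to $A$ and the constant family given by $P$): for every closed point $M$ of $\Spec R$,
$$\sum_{\gamma\in R^{(1)},\ M\in\gamma}\partial_{\gamma,M}\big(\partial_{\gamma}(\alpha)\big)=0\in\Z/2.$$
I would then compute these identities at $m_{1},m_{2},m_{3}$ and add them to obtain $0=1$.

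The first step is to localise the residues. Any rational function whose divisor is supported on $V(\pi_{1})\cup V(\pi_{2})\cup V(\pi_{3})$ is a unit at every other height-one prime of $R$, so $a,b,c$ are units in $R_{\gamma}$ whenever $\gamma\in R^{(1)}$ is distinct from $(\pi_{1}),(\pi_{2}),(\pi_{3})$; since $E(K)\neq\emptyset$ forces $E(K_{\gamma})\neq\emptyset$, Proposition \ref{valeursdeA}(i)--(iii), applied to the henselisation of $R_{\gamma}$, gives $\partial_{\gamma}(\alpha)=0$ there. So only the three classes $\xi_{i}:=\partial_{\pi_{i}}(\alpha)\in\kappa(\pi_{i})^{\times}/\kappa(\pi_{i})^{\times2}$ survive. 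Reading off the valuations from $a=\pi_{2}\pi_{3}$, $b=\pi_{1}\pi_{3}$, $ab\equiv\pi_{1}\pi_{2}$, $c=\pi_{1}\pi_{2}\pi_{3}$ --- so that the unique one of $a,b,ab$ which is a unit is $a$ at $(\pi_{1})$, $b$ at $(\pi_{2})$, and $ab$ at $(\pi_{3})$ --- and applying Proposition \ref{valeursdeA} to the henselisation of each $R_{(\pi_{i})}$, with a split according to whether that distinguished unit is a square in the residue field (this invokes only cases (i), (ii), (iv) of that proposition), one finds, up to harmless signs,
$$\xi_{1}\in\{\overline{-\pi_{2}},\ \overline{-\pi_{3}}\},\qquad\xi_{2}\in\{0,\ \overline{\pi_{1}\pi_{3}}\},\qquad\xi_{3}\in\{0,\ \overline{\pi_{1}\pi_{2}}\}.$$
Here $\overline{-\pi_{2}}$ and $\overline{-\pi_{3}}$ are nonzero, because on the curve $\Spec(R/\pi_{1})$ the element $\pi_{2}$ (resp.\ $\pi_{3}$) generates the maximal ideal at $m_{3}$ (resp.\ $m_{2}$).

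Then I would add the displayed vanishing over $M\in\{m_{1},m_{2},m_{3}\}$ --- these being all the closed points of $\Spec R$ --- and regroup the result by $\gamma$, getting
$$\sum_{i=1}^{3}\ \sum_{M\in V(\pi_{i})\cap\{m_{1},m_{2},m_{3}\}}\partial_{\pi_{i},M}(\xi_{i})=0.$$
By the hypothesis $m_{i}=(\pi_{j},\pi_{k})$ the curve $V(\pi_{i})$ meets $\{m_{1},m_{2},m_{3}\}$ in exactly $m_{j}$ and $m_{k}$ (the $V(\pi_{i})$ are the three sides of a triangle on $m_{1},m_{2},m_{3}$), and $\partial_{\pi_{i},M}$ is the normalised valuation at $M$ of $\xi_{i}$ on $\Spec(R/\pi_{i})$. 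A direct inspection of the two affine charts of $\Spec(R/\pi_{i})$ gives: each of $\overline{-\pi_{2}}$ and $\overline{-\pi_{3}}$ is a uniformiser at one of $m_{2},m_{3}$ and a unit at the other, so the contribution of $(\pi_{1})$ is $1$ in every case; whereas $\overline{\pi_{1}\pi_{3}}$ is a uniformiser at \emph{both} $m_{1}$ and $m_{3}$ (and $\overline{\pi_{1}\pi_{2}}$ at both $m_{1}$ and $m_{2}$), so the contributions of $(\pi_{2})$ and $(\pi_{3})$ --- including that of the trivial class, which is $0$ --- vanish. Hence the left-hand side equals $1+0+0=1\neq0$, a contradiction; therefore $E(K)=\emptyset$.

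The laborious part will be the local evaluation of $\xi_{1},\xi_{2},\xi_{3}$ from Proposition \ref{valeursdeA}: at each $(\pi_{i})$ one must run through the parities of $v_{\pi_i}(a),v_{\pi_i}(b),v_{\pi_i}(c)$ and handle separately the subcase where the distinguished unit becomes a square in the residue field (this changes the value of $\alpha$ but, as the argument shows, not the parity of the contribution). The conceptual reason the cancellation fails precisely for this triangle configuration --- the loop in the special fibre --- is the asymmetry of $A$: being anchored at the factor with coefficient $a=\pi_{2}\pi_{3}$, its residue $\partial_{\pi_{1}}(\alpha)$ is represented on the side $V(\pi_{1})$ by a function with odd total order at the two opposite vertices, while $\partial_{\pi_{2}}(\alpha)$ and $\partial_{\pi_{3}}(\alpha)$ are trivial or have even total order on their respective sides; going once around the loop retains the single stray residue.
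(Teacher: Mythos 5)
Your proof is correct and is essentially the paper's own argument: evaluate $A$ at the hypothetical rational point, use Proposition \ref{valeursdeA} to pin the residues at $(\pi_1),(\pi_2),(\pi_3)$ to exactly the sets $\{\overline{-\pi_2},\overline{-\pi_3}\}$, $\{1,\overline{\pi_1\pi_3}\}$, $\{1,\overline{\pi_1\pi_2}\}$ and to kill them elsewhere, then contradict the reciprocity of the Bloch--Ogus complex at the closed points. The only (harmless) cosmetic difference is that the paper keeps the three relations at $m_1,m_2,m_3$ separate as triples in $(\Z/2)^3$ and checks that no choice of residues sums to $(0,0,0)$, whereas you add the three relations into a single identity in $\Z/2$ and observe the total is always $1$.
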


\begin{proof}
 Soit $K_{i}$ le corps des fractions du hens\'elis\'e du localis\'e de  $R$ 
 en l'id\'eal premier $\pi_{i}$. Soit $\kappa_{i}$ le corps des fractions de $R/\pi_{i}$,
 corps r\'esiduel de ce hens\'elis\'e.
 D'apr\`es la proposition \ref{valeursdeA},
  l'image de l'application compos\'ee
$$E(K_{i}) \to {}_{2}\Br K_{i} \to \kappa_{i}^{\times}/ \kappa_{i}^{\times 2},$$
o\`u la premi\`ere fl\`eche est l'\'evaluation de $A=(X_{1}^2-aY_{1}^2, b)$
et la seconde l'application r\'esidu,
est contenue :

pour $i=1$, dans $\{-\pi_{2}, -\pi_{3}\}$;

pour $i=2$,  dans $\{1,\pi_{1}\pi_{3} \}$;

pour $i=3$, dans $\{1, \pi_{1}\pi_{2} \}$.

On consid\`ere l'application compos\'ee

$$\prod_{i} E(K_{i}) \to  \bigoplus _{i=1}^{i=3} {}_{2}\Br K_{i} \to
\bigoplus_{i=1}^{i=3}  \kappa_{i}^{\times}/ \kappa_{i}^{\times 2}
\to \bigoplus_{j=1}^{j=3}  \Z/2$$
d\'efinie \`a partir des applications d'\'evaluations de $A$
et du complexe de Bloch-Ogus sur l'anneau semi-local $R$.
Les indices $j$ sont ceux des points ferm\'es $m_{j}$.
L'application 
$ \kappa_{i}^{\times}/ \kappa_{i}^{\times 2} \to \oplus_{j=1}^{j=3}  \Z/2$
envoie une classe $\gamma \in  \kappa_{i}^{\times}/ \kappa_{i}^{\times 2}$
sur la valuation modulo $2$ de $\gamma$
en $m_{j}$ pour $j\neq i$ et sur $0$ 
en $m_{i}$.

Pour tout id\'eal premier $x$ de $R$ autre que
ceux d\'efinis par
$\pi_{1}, \pi_{2}, \pi_{3}$, les \'el\'ements
$a, b,c$ sont des unit\'es dans $R_{x}$.
La proposition \ref{valeursdeA} montre qu'en un tel $x$,
pour  tout point   $P_{x} \in E(K_{x})$, on a
$\partial_{x}(A(P_{x}))=0$.

Ceci implique que l'application compos\'ee

$$E(K) \to \prod_{i} E(K_{i}) \to  \bigoplus _{i=1}^{i=3} {}_{2}\Br K_{i} \to
\bigoplus_{i=1}^{i=3}  \kappa_{i}^{\times}/ \kappa_{i}^{\times 2}
\to \bigoplus_{j=1}^{j=3}  \Z/2$$
est nulle.

Pour $i=1$, l'image dans $ \bigoplus_{j=1}^{j=3}  \Z/2$
de $(-\pi_{2})$, resp. $(-\pi_{3})$ dans $ \kappa_{1}^{\times}/ \kappa_{1}^{\times 2}$,
consiste en $(0,0,1)$, resp. $(0,1,0)$.  

Pour $i=2$,  l'image dans $ \bigoplus_{j=1}^{j=3}  \Z/2$
de $1$, resp. $\pi_{1}\pi_{3}$, consiste en $(0,0,0)$, resp. $(1,0,1)$.

Pour $i=3$, l'image dans $ \bigoplus_{j=1}^{j=3}  \Z/2$
de $1$, resp. $\pi_{1}\pi_{2}$ consiste en $(0,0,0)$, resp. $(1,1,0)$.

Aucune addition sur $i=1,2,3$ de ces triplets (un par indice $i$)  ne donne $(0,0,0)$.
Ainsi $E(K)=\emptyset$.
\end{proof}

\subsection{Contre-exemple au principe local-global pour les valuations}

Soit $\mathcal X$ un sch\'ema  int\`egre  r\'egulier excellent 
de dimension deux
avec $2 $ inversible sur $\mathcal X$, quasi-projectif sur un anneau,
satisfaisant la  condition suivante :

{\it  Il existe un triplet de diviseurs int\`egres lisses $L_{i} \subset {\mathcal X}$
formant triangle :  pour $i\neq j \in \{1,2,3\}$, $L_{i} \cap L_{j}$ est une intersection transverse
en un point ferm\'e $m_{k}$, o\`u $\{i,j,k\}=\{1,2,3\}$ et les points $m_{k}, k \in 
 \{1,2,3\}$ sont distincts.}
 
 \medskip
 
 Soit $K$ le corps des fonctions de  $\mathcal X$.
 
Comme tout anneau semi-local  a un groupe de Picard trivial,
il existe des \'el\'ements $\pi_{i} \in K^{\times}, i=1,2,3,$
de diviseurs 
 $$div_{\mathcal X}(\pi_{i}) =L_{i} + D_{i}$$
satisfaisant  les conditions suivantes.

(a) Le support du diviseur $D_{1}$ ne contient 
 aucun des $m_{i}$ (et donc aucun des $L_{i}$).
 
(b)  Le support du diviseur $D_{2}$ ne contient 
 aucun des $m_{i}$ (et donc aucun des $L_{i}$),
ni   aucune composante du support de $D_{1}$,
  ni aucun des points de rencontre des $L_{i}$
 avec un point du support de $D_{1}$.
 
 (c) Le support du diviseur $D_{3}$ ne contient 
 aucun des $m_{i}$ (et donc aucun des $L_{i}$),
ni   aucune composante du support de $D_{1}$ ou de $D_{2}$,
 ni aucun des points de rencontre des $L_{i}$
 avec un point du support de $D_{1}$, ni aucun des points 
de rencontre des $L_{i}$ avec un point du support de $D_{2}$,
ni aucun point de rencontre des supports des diviseurs $D_{1}$ et $D_{2}$.

Ces conditions impliquent  que 
pour tout point ferm\'e $M$  de $\mathcal X$
l'un des $\pi_{i}$ est inversible en $M$.

\begin{prop}\label{contreexempleglobalgeneral}
Avec les notations ci-dessus, soient  $a = \pi_2\pi_3$, $b = \pi_3\pi_1$ 
et $ c = \pi_1\pi_2\pi_3$. 
Soit $E$ la $K$-vari\'et\'e
 d'\'equation 
$$
 (X_1^2 - aY_1^2)(X_2^2 - bY_2^2)(X_3^2 - abY_3^2) = c
 $$

(i) La $K$-vari\'et\'e    $E$ n'a pas de point $K$-rationnel.

(ii) Supposons que  les corps r\'esiduels des points de codimension 2 sont s\'eparablement clos,
et que les corps r\'esiduels des points de codimension 1 de $\mathcal X$ sont de 2-dimension cohomologique
inf\'erieure ou \'egale  \`a $1$. Alors pour tout  anneau  local hens\'elien int\`egre  $R$, de corps des fractions $F$,
 et tout morphisme dominant
 $\Spec R \to \mathcal X$,  on a $E(F) \neq \emptyset$.
\end{prop}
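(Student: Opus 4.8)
The plan is to prove the two assertions separately: (i) by applying the higher reciprocity obstruction of Proposition~\ref{obstrucsup} to the quaternion class $A=(X_1^2-aY_1^2,b)$, localised at the three triple points $m_1,m_2,m_3$, and (ii) by exhibiting an explicit point of $E$ over each of the relevant henselian local fields. For (i), suppose $P\in E(K)$. By \S\ref{algebre} the class $A$ lies in ${}_2\Br Z=H^2_{\mathrm{nr}}(K(Z)/K,\mu_2)$ for $Z$ a smooth compactification of $E$, so $A(P)\in{}_2\Br K$; and since here $r=2$ and $i=1$, Proposition~\ref{obstrucsup}(iii)--(iv) applies with no hypothesis that $\mathcal X$ lies over a field, giving, for each closed point $M$ of $\mathcal X$,
\[
\sum_{\gamma\ni M}\partial_{\gamma,M}\bigl(\partial_\gamma(A(P))\bigr)=0\in\Z/2 .
\]
I would use this only at $M=m_1,m_2,m_3$. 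Conditions (a), (b), (c) ensure that no component of any $D_i$ passes through an $m_k$, so the reciprocity at $m_1=L_2\cap L_3$ involves only $L_2$ and $L_3$, and cyclically for $m_2,m_3$. Along $L_1,L_2,L_3$ the divisors of $a=\pi_2\pi_3$, $b=\pi_1\pi_3$ and $c=\pi_1\pi_2\pi_3$ have fixed, explicit parities; feeding these into Proposition~\ref{valeursdeA}(iv) together with the rewritings (\ref{Avia1})--(\ref{Avia3}) forces
\[
\partial_{L_1}(A(P))\in\{\overline{-\pi_2},\,\overline{-\pi_3}\},\qquad
\partial_{L_2}(A(P))\in\{1,\,\overline{\pi_1\pi_3}\},\qquad
\partial_{L_3}(A(P))\in\{1,\,\overline{\pi_1\pi_2}\},
\]
each of these subsets having exactly two elements because the $L_i$ meet transversally at the $m_k$. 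Computing the second residues $\partial_{L_i,m_k}$ of these classes --- each is merely the parity of a valuation on the curve $L_i$ at $m_k$, read off from the transversal crossings --- one finds that the three reciprocity relations force $\partial_{L_1,m_2}(\partial_{L_1}(A(P)))+\partial_{L_1,m_3}(\partial_{L_1}(A(P)))=0$ in $\F_2$, whereas transversality makes this sum equal to $1$; this contradiction yields $E(K)=\emptyset$. The computation is exactly the one in Proposition~\ref{semilocal}.

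For (ii), let $R$ be a henselian local domain with a dominant morphism $\Spec R\to\mathcal X$, put $F=\operatorname{Frac}(R)$, and let $M$ be the image of the closed point of $\Spec R$; I treat the case $\operatorname{codim}_{\mathcal X}(M)\ge 1$, which is the one relevant to the local--global principle over $\Omega$. Since $R$ is henselian and dominates $\mathcal O_{\mathcal X,M}$, the induced map $\mathcal O^h_{\mathcal X,M}\to R$ is injective, so $F$ contains $K^h_M:=\operatorname{Frac}(\mathcal O^h_{\mathcal X,M})$ and it suffices to prove $E(K^h_M)\neq\emptyset$. If $M$ is closed, $\kappa(M)$ is separably closed and, $2$ being invertible, $-1$ and every unit of $\mathcal O^h_{\mathcal X,M}$ are squares there by Hensel's lemma; one of $\pi_1,\pi_2,\pi_3$ is a unit at $M$, hence one of $c/a=\pi_1$, $c/b=\pi_2$, $c/(ab)=\pi_3^{-1}$ is a square in $K^h_M$, and then the corresponding factor $X_j^2-(\,\cdot\,)Y_j^2=c$ of the defining equation of $E$ is solved at once (setting the other two factors equal to $1$), producing a $K^h_M$-point.

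If instead $M=\gamma$ has codimension $1$, then $\mathcal O^h_{\mathcal X,\gamma}$ is a henselian discrete valuation ring whose residue field satisfies $\cd_{2}\kappa(\gamma)\le 1$, so Proposition~\ref{pointslocaux}(b) gives $E(K^h_\gamma)\neq\emptyset$ unless $v_\gamma(a)$ and $v_\gamma(b)$ are even, $v_\gamma(c)$ is odd, and $a,b,ab$ are non-squares in $K^h_\gamma$. But $v_\gamma(a)=v_\gamma(\pi_2)+v_\gamma(\pi_3)$, $v_\gamma(b)=v_\gamma(\pi_1)+v_\gamma(\pi_3)$ and $v_\gamma(c)=v_\gamma(\pi_1)+v_\gamma(\pi_2)+v_\gamma(\pi_3)$, so this parity pattern forces all three $v_\gamma(\pi_i)$ to be odd; and that is impossible, since conditions (a), (b), (c) make $D_1,D_2,D_3$ pairwise component-disjoint and disjoint from the $L_j$, whence at every codimension-$1$ point at least one $\pi_i$ is a unit. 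Thus $E(K^h_\gamma)\neq\emptyset$, finishing (ii).

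The hard part will be the residue bookkeeping in (i): determining precisely the two-element sets that come out of Proposition~\ref{valeursdeA} (one must distinguish subcases according to which of $a,b,ab$ may become a square over the henselisation and according to the parity of $v(c)$), verifying that transversality genuinely makes these sets of size two, and tracing the resulting $\F_2$-relations to the contradiction $0=1$. This is the concrete incarnation of the ``loop in the special fibre'' obstruction. By contrast, once one has isolated from (a), (b), (c) the three facts used above --- no $D_i$-component through an $m_k$, pairwise component-disjointness of the $D_i$, and some $\pi_i$ a unit at every point --- part (ii) is short.
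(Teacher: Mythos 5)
Votre proposition est correcte et suit essentiellement la même démarche que l'article : le point (i) est exactement le calcul de réciprocité semi-local de la proposition \ref{semilocal} appliqué à l'anneau semi-local en $m_1,m_2,m_3$ (vos ensembles de résidus à deux éléments et la contradiction dans $\F_2$ coïncident avec ceux du texte, seule la mise en forme finale --- $\partial_{L_1,m_2}+\partial_{L_1,m_3}=0$ contre $=1$ au lieu de la vérification exhaustive des triplets --- diffère), et le point (ii) reprend la même disjonction de cas, le cas de codimension $1$ via la proposition \ref{pointslocaux} et l'argument de parité/disjonction des supports, le cas d'un point fermé via l'existence d'un $\pi_i$ inversible dont l'opposé est un carré. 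La seule variante, purement cosmétique, est que vous réduisez d'abord à $K^h_M$ avant d'invoquer la clôture séparable de $\kappa(M)$, là où l'article travaille directement dans $R$.
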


  \begin{proof}
Soit $B$ l'anneau semi-local de $\mathcal X$ dont les points ferm\'es
 sont $m_{1},m_{2},m_{3}$. Sur cet anneau, la situation est exactement celle
 d\'ecrite dans  la proposition \ref{semilocal}.   Ceci \'etablit (i).

 Soit $\gamma$ un point de codimension 1 de $\mathcal X$.
 On a 
$$div_{\mathcal X}(a)= L_{2} +L_{3} + D_{2} + D_{3},$$
 $$div_{\mathcal X}(b)= L_{3} +L_{1} + D_{3} + D_{1},$$
 $$div_{\mathcal X}(c)= L_{1} +L_{2} + L_{3} + D_{1} + D_{2} + D_{3}.$$
 
 Si l'on a $v_{\gamma}(a)$ pair, alors $\gamma$ est une des composantes du support de 
   $D_{2}$
 ou $D_{3}$, o\`u elle appara\^{\i}t avec   multiplicit\'e paire. Mais alors $v_{\gamma}(c)$ est pair.
  De la proposition \ref{pointslocaux} et de l'hypoth\`ese $\cd_{2}(\kappa(\gamma) ) \leq 1$
   il r\'esulte alors :
 
 \medskip
 
 {\it Pour tout point $\gamma$ de codimension 1 de $\mathcal X$, on a $E(K_{\gamma}) \neq \emptyset$.}
 
 \medskip
 
 Soient $R$ un anneau  local hens\'elien int\`egre, de corps des fractions $F$,
 et
 $\Spec R \to \mathcal X$  un morphisme dominant.

 Soit $x$  l'image du point ferm\'e de $R$. Ceci induit une inclusion locale
 $O_{M,x} \hookrightarrow R$ et donc une inclusion locale
  $O_{M,x}^h  \hookrightarrow R$  au niveau des hens\'elis\'es,
  puis une inclusion des corps de fractions $K_{x} \hookrightarrow F$.

  Si $x=\gamma$  est un point de codimension 1 de $\mathcal X$,
  on vient d'\'etablir  $E(K_{\gamma}) \neq \emptyset$,  on a donc $E(F) \neq \emptyset$.  
  
 Supposons que $x=M$ est un point ferm\'e de  $\mathcal X$.
 Les conditions impos\'ees aux $D_{i}$ garantissent, comme on le v\'erifie
 ais\'ement, que l'un au moins  des $\pi_{i}$, soit $\pi_{i_{0}} $, est inversible en $x$.
 Comme le corps r\'esiduel de $M$ est s\'eparablement clos de caract\'eristique diff\'erente de 2, la classe
 de $-\pi_{i_{0}} $ dans ce corps r\'esiduel est un carr\'e, elle l'est donc aussi dans le corps r\'esiduel de 
 l'anneau local hens\'elien $R$, et donc
    $-\pi_{i_{0}} $ est un carr\'e dans  $R \subset F$. 
Ainsi  l'image  de $\pi_{1}\pi_{2}\pi_{3}$  dans $F$
est le produit d'un carr\'e  de $F$ et du produit $-\prod_{i\neq i_{0}} \pi_{i}$.

L'\'equation
 $$
 (X_1^2 - \pi_{2}\pi_{3}Y_1^2)(X_2^2 - \pi_{3}\pi_{1}Y_2^2)(X_3^2 - \pi_{1}\pi_{2}\pi_{3}^2Y_3^2) = \pi_{1}\pi_{2}\pi_{3}
 $$
 admet sur $F$ une solution avec toutes les coordonn\'ees sauf $Y_{i_{0}}$
 nulles, ce qui ach\`eve la d\'emonstration.
   \end{proof}

\begin{cor}\label{contrexcourbe}
 Soit $R$ un anneau de valuation discr\`ete hens\'elien, excellent, de corps
 r\'esiduel s\'eparablement clos de caract\'eristique diff\'erente de 2,  et
 ${\mathcal X}/R$ un $R$-sch\'ema r\'egulier int\`egre, projectif sur $R$,
\`a fibre g\'en\'erique une courbe lisse  g\'eom\'etriquement int\`egre.
Soit $K$ le corps des fonctions de ${\mathcal X}$.
Supposons que la fibre sp\'eciale r\'eduite de ${\mathcal X}/R$
contient un triangle $L_{1},L_{2},L_{3}$.
Choisissons les $\pi_{i} \in  K$  et $a,b,c \in K$ comme dans
la proposition \ref{contreexempleglobalgeneral}.
Soit $E$ la $K$-vari\'et\'e d'\'equation
$$
 (X_1^2 - aY_1^2)(X_2^2 - bY_2^2)(X_3^2 - abY_3^2) = c.
 $$
 Soit  $L=K(\sqrt{a},\sqrt{b})$. Soit $E'$ la $K$-vari\'et\'e
 d'\'equation
 $$\Norm_{L/K}(\Xi) = c^2.$$
 
Alors :

(i) Pour toute valuation 
discr\`ete 
 $\gamma$ de $K$, 
de hens\'elis\'e $K_{\gamma}$, on a $E(K_{\gamma}) \neq \emptyset$
et $E'(K_{\gamma}) \neq \emptyset$.

(ii) On a $E(K)=\emptyset$ et $E'(K) =\emptyset$.

(iii) Il existe un module galoisien fini  $\mu$ sur le corps $K$
et une classe non nulle dans $H^2(K,\mu)$ qui s'annule dans chaque
$H^2(K_{\gamma},\mu)$.

(iv) Il existe un $K$-groupe semi-simple connexe $G$ et un
\'el\'ement non trivial de $H^1(K,G)$ d'image triviale dans
chaque $H^1(K_{\gamma},G)$.
\end{cor}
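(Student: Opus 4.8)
The plan is to read off (i) and (ii) from Proposition~\ref{contreexempleglobalgeneral}, to transport the resulting torsor class into degree‑$2$ Galois cohomology to obtain (iii), and to apply Serre's method to obtain (iv). First I would check the hypotheses of Proposition~\ref{contreexempleglobalgeneral}(ii) for $\mathcal X/R$: every closed point of $\mathcal X$ maps to the closed point of $\Spec R$, hence lies in the special fibre, a projective curve over the separably closed field $k$, so the residue fields of the codimension‑$2$ points of $\mathcal X$ are finite over $k$, hence separably closed; a codimension‑$1$ point is either vertical, with residue field the function field of a curve over $k$ (so of $2$‑cohomological dimension $\le 1$), or horizontal, with residue field finite over the fraction field $F$ of $R$, and $\cd_{2}F\le 1$ since $R$ is henselian with separably closed residue field of characteristic $\ne 2$; in all cases $\cd_{2}\le 1$. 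Next, any rank‑$1$ discrete valuation $\gamma$ of $K$ restricts to $F$ either trivially or, up to equivalence, as the valuation of $R$ — since a henselian discrete valuation ring is the unique discrete valuation ring with that fraction field — so $R\subseteq O_{\gamma}$, and the valuative criterion of properness for the projective morphism $\mathcal X\to\Spec R$ makes $\Spec O_{\gamma}^{h}\to\mathcal X$ a dominant morphism from a henselian local integral ring; Proposition~\ref{contreexempleglobalgeneral}(ii) then yields $E(K_{\gamma})\ne\emptyset$, which is (i) for $E$. Part (ii) for $E$ is exactly Proposition~\ref{contreexempleglobalgeneral}(i), applied to the semi‑local ring of $\mathcal X$ at the three vertices of the triangle in the special fibre.

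For $E'$, I would invoke the facts recalled in \S\ref{algebre}: the map $H^{1}(-,Q)\to H^{1}(-,T)$ is functorial in the base field, injective, and carries $[E_{c}]$ to $[E'_{c^{2}}]$; since $E$ and $E'$ are torsors under $Q$ and $T$ respectively, $E(K_{\gamma})\ne\emptyset$ forces $[E'_{c^{2}}]=0$ in $H^{1}(K_{\gamma},T)$, i.e.\ $E'(K_{\gamma})\ne\emptyset$, while $E(K)=\emptyset$ forces $[E_{c}]\ne 0$ in $H^{1}(K,Q)$, hence $[E'_{c^{2}}]\ne 0$, i.e.\ $E'(K)=\emptyset$; this is (i) and (ii) for $E'$. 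For (iii), I would apply the embedding lemma of \S\ref{algebre} to the torus $T$, obtaining a finite $K$‑group of multiplicative type $\mu$ and a functorial injection $H^{1}(K,T)\hookrightarrow H^{2}(K,\mu)$, hence, composing with $H^{1}(K,Q)\hookrightarrow H^{1}(K,T)$, a functorial injection $H^{1}(K,Q)\hookrightarrow H^{2}(K,\mu)$. As $2\in R^{\times}$ and $\mu$ has $2$‑power order, $\mu$ is étale, i.e.\ it is a finite Galois module. Let $\xi\in H^{2}(K,\mu)$ be the image of $[E_{c}]$. By (ii), $[E_{c}]\ne 0$, so $\xi\ne 0$; by (i) and functoriality, $\xi$ vanishes in every $H^{2}(K_{\gamma},\mu)$.

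Finally, for (iv), I would apply Serre's method to the class $\xi$ of (iii). One embeds $\mu$ as a central subgroup of a semisimple simply connected $K$‑group $\tilde G$ — taken to be a finite product of Weil restrictions of special linear groups, so that $\tilde G$ is \emph{special} and $H^{1}(L,\tilde G)=\{1\}$ for every field $L\supseteq K$ — and one arranges, as Serre does over a number field, that $\xi$ lies in the image of the connecting map $\delta\colon H^{1}(K,G)\to H^{2}(K,\mu)$ attached to $1\to\mu\to\tilde G\to G\to 1$, where $G=\tilde G/\mu$ is a connected semisimple $K$‑group (not simply connected). Picking $\eta\in H^{1}(K,G)$ with $\delta(\eta)=\xi$, one has $\eta\ne 1$ because $\delta(\eta)=\xi\ne 0$; and over each $K_{\gamma}$ the fibre of $\delta_{K_{\gamma}}$ above the neutral class is a single point (it is the image of $H^{1}(K_{\gamma},\tilde G)=\{1\}$), while $\delta_{K_{\gamma}}(\eta_{K_{\gamma}})=\xi_{K_{\gamma}}=0$, so $\eta_{K_{\gamma}}=1$; thus $\eta$ is a nontrivial class in $H^{1}(K,G)$ that is trivial in every $H^{1}(K_{\gamma},G)$. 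The main obstacle is exactly this last step: producing a central embedding $\mu\hookrightarrow\tilde G$ into a \emph{special} simply connected group for which the specific class $\xi$ already lies in the image of $\delta$, so that a lift $\eta$ exists. Once this is set up everything else is formal, and once Proposition~\ref{contreexempleglobalgeneral} is in hand, parts (i)--(iii) are essentially bookkeeping.
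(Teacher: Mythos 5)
Your proposal follows the same route as the paper's proof: parts (i) and (ii) for $E$ are read off from Proposition~\ref{contreexempleglobalgeneral} (via Proposition~\ref{semilocal} for (ii)) once one knows that every discrete valuation of $K$ is centred on $\mathcal X$; the statements for $E'$ and part (iii) come from the functorial injections $H^1(K,Q)\hookrightarrow H^1(K,T)\hookrightarrow H^2(K,\mu)$ recalled in \S\ref{algebre}; and (iv) is Serre's argument from \cite[III, \S 4.7]{serreCG}, which the paper likewise invokes without further detail --- the ``main obstacle'' you flag is exactly what the paper disposes of by citing Serre's Lemme~9 (embedding $\mu$ centrally into a Weil restriction of a product of $SL_n$'s with universally trivial $H^1$) and declaring the rest formal. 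The one step whose justification I would not accept as written is your claim that a henselian discrete valuation ring is the \emph{unique} discrete valuation ring with its fraction field: that is not a general fact about henselian DVRs (it already becomes unclear when the residue field has nontrivial discrete valuations). What the paper uses instead, and what is true here, is that since $k$ is separably closed of characteristic $\neq 2$ and $R$ is henselian, the group $R^{\times}$ is infinitely $2$-divisible; hence any discrete valuation $v$ of $K$ vanishes on $R^{\times}$, and then $v(\pi)\ge 0$ for a uniformizer $\pi$ of $R$ (otherwise $\pi^2=1+(\pi^2-1)$, a sum of two units, would violate the ultrametric inequality), so $R\subseteq O_{v}$ and the valuative criterion of properness applies. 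With that substitution your argument coincides with the paper's.
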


\begin{proof}
On commence par observer que   tout
anneau de valuation discr\`ete  $T$  du corps des
fonctions de $\mathcal X$ contient le groupe 2-divisible $R^{\times}$
et donc l'anneau
 $R$, donc, puisque 
${\mathcal X}/R$ est projectif,
d\'efinit un morphisme dominant $\Spec T \to \mathcal{X}$.

Les \'enonc\'es pour $E$ r\'esultent imm\'ediatement
de ce qui pr\'ec\`ede.  Ceux pour $E'$ et pour $\mu$ r\'esultent des rappels du
paragraphe \ref{algebre}, selon lesquels :

(a) on a une injection 
$$H^1(K,Q) \hookrightarrow H^1(K,T),$$
envoyant  la classe de $E$ sur celle de $E'$,
et fonctorielle en le corps de base $K$.

(b)  Pour $\mu$ un module galoisien fini  convenable, 
on a une injection
$$H^1(K,Q) \hookrightarrow H^2(K,\mu)$$
 fonctorielle en le corps de base $K$.

Une fois \'etabli l'\'enonc\'e (iii), il suffit de recopier
l'argument de Serre (\cite[Chap. III, \S 4.7]{serreCG}) pour obtenir l'\'enonc\'e (iv).
L'\'enonc\'e (iii) est en effet l'analogue dans notre contexte du lemme 8
de Serre (lemme d'arithm\'etique  \og nettement moins trivial \fg). 
Le lemme 9 de \cite{serreCG} vaut aussi dans notre contexte : le groupe not\'e $S$ est une restriction
\`a la Weil d'un produit de groupes $SL_{n}$, on a donc $H^1(F,S)=0$
sur tout surcorps $F$ du corps de base. La d\'emonstration du th\'eor\`eme 8 de \cite{serreCG}
 est formelle \`a partir des  lemmes 8 et 9 de \cite{serreCG}
\end{proof}

\begin{rema}\label{25suffitpas}
 Soit $R=\C[[t]]$. Dans la situation du corollaire \ref{contrexcourbe}, montrons que 
l'on peut choisir les $\pi_{i}$ dans la proposition \ref{contreexempleglobalgeneral}
de telle fa\c con 
que l'obstruction d\'efinie au \S \ref{recipweil}, provenant de la loi
 de r\'eciprocit\'e de Weil sur la courbe $X={\mathcal X}\times_{R}K$,
 ne permette pas d'\'etablir $E(K) \neq \emptyset$.
 Dans le cas ici consid\'er\'e, le seul groupe de cohomologie 
 qui puisse, via la proposition \ref{weilrecip},  donner une information est $H^2_{nr}(K(E)/K, \Z/2)$,
 groupe qui est engendr\'e modulo $H^2(K,\Z/2)$
 par la classe de l'alg\`ebre de quaternions $A$.

  Avec les notations ci-dessus, les 
diviseurs des $\pi_{i}$ sur la fibre g\'en\'erique $X$ sont deux \`a deux disjoints.

On calcule les valeurs possibles pour $$\partial_{v}(A(P_{v})) \in H^1(\kappa(v),\Z/2)=\kappa(v)^{\times}/\kappa(v)^{\times 2} = \Z/2$$
pour $v \in X^{(1)}$ et $P_{v} \in E(K_{v})$.

En utilisant la proposition \ref{valeursdeA}, on trouve :

Si toutes les valuations $v(\pi_{i})$ sont paires, on obtient   $0$.

Si $v(\pi_{1})$ est impair, on trouve $\overline{\pi}_{2} $ et    $\overline{\pi}_{3}$.

  Si $v(\pi_{2})$ est impair, on trouve $0$ et $\overline{\pi}_{1} \overline{\pi}_{3 }$

 Si $v(\pi_{3})$ est impair, on trouve $0$ et $\overline{\pi}_{1} \overline{\pi}_{2}$.
 
Pour tout $v \in X^{(1)}$, la corestriction
 $\Z/2=\kappa(v)^{\times}/\kappa(v)^{\times 2} \to k^{\times}/k^{\times 2}=\Z/2$
est l'identit\'e.
Passant en notation additive, on voit que l'on a
 $$\sum_{v \in X^{(1)}} \Cores_{\kappa(v)/k}(\partial_{v}(A(P_{v}))) = 1  \in  \Z/2$$
pour toute ad\`ele $\{P_{v}\}$ si et seulement si  l'on a les trois propri\'et\'es 
suivantes :

Si  $v(\pi_{1})$ est impair  on a  $\overline{\pi}_{2} = \overline{\pi}_{3}$,
et il existe un nombre impair de tels $v$ avec  $\overline{\pi}_{2} = \overline{\pi}_{3} $
non carr\'e dans $\kappa(v)^{\times}$.

 Si $v(\pi_{2})$ est impair, $\overline{\pi}_{1}= \overline{\pi}_{3 }$.
 
 Si $v(\pi_{3})$ est impair, $ \overline{\pi}_{1}= \overline{\pi}_{2}$.
 
Or on peut choisir les $\pi_{i}$ dans la construction ci-dessus de fa\c con
qu'il y ait une valuation   $v \in X^{(1)}$ avec $v(\pi_{2})$  impair
et $\overline{\pi}_{1} \neq \overline{\pi}_{3 }$.  Voici comment. Une fois choisi $\pi_{1}$,
on choisit une place $v$ quelconque non dans le diviseur de $\pi_{1}$
sur $X$. On choisit $\pi_{2}$ comme ci-dessus, avec la condition suppl\'ementaire
d'avoir  $v(\pi_{2})=1$. Ceci est possible car le groupe de Picard d'un anneau semi-local est nul.
Le corps r\'esiduel $L$ en $v$ est le corps des fractions d'une extension finie de $K$,
on a $L^{\times}/L^{\times 2} = \Z/2$. On choisit alors $\pi_{3}$ comme ci-dessus,
avec la condition suppl\'ementaire que sa classe dans $L^{\times}/L^{\times 2} $
diff\`ere de celle de $\pi_{1}$. C'est possible par l'ind\'ependance des valuations
sur  le corps des fractions d'un anneau de Dedekind.

 Il  existe alors une famille $\{P_{v}\}$  telle que
 $$\sum_{v \in X^{(1)}} \Cores_{\kappa(v)/k}(\partial_{v}(A(P_{v}))) = 0 \in  \Z/2.$$
 
 La proposition  \ref{weilrecip}   ne permet donc pas de montrer $E(K)=\emptyset$,
 r\'esultat que nous obtenons en utilisant les obstructions de r\'eciprocit\'e sup\'erieure
 sur la surface $\mathcal X$.

  \end{rema}

 \begin{lem}\label{tate}
 Soit $R$ un anneau de valuation discr\`ete de corps r\'esiduel $k$
 un corps parfait
de caract\'eristique diff\'erente de $2$. Soient $K$ le corps des fractions
de $R$ et $\pi$ une uniformisante.
Le mod\`ele propre minimal r\'egulier ${\mathcal X}/R$
de la courbe elliptique  sur $K$ d'\'equation affine
$$ y^2=x^3+x^2+\pi^3$$
est de type $I_{3}$, la fibre sp\'eciale est r\'eduite, elle est
 r\'eunion de trois courbes isomorphes \`a ${\mathbf  P}^1_{k}$
 se coupant deux \`a deux transversalement en un unique point $k$-rationnel.
  \end{lem}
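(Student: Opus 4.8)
The plan is to identify the Kodaira type of the minimal proper regular model by running Tate's algorithm on the given Weierstrass equation. First I would record the coefficients $a_1=a_3=a_4=0$, $a_2=1$, $a_6=\pi^3$ and compute the usual associated quantities:
$$b_2=4,\qquad b_4=0,\qquad b_6=4\pi^3,\qquad b_8=4\pi^3,\qquad c_4=b_2^2-24b_4=16,$$
$$\Delta=-b_2^2b_8-8b_4^3-27b_6^2+9b_2b_4b_6=-64\pi^3-432\pi^6=-64\pi^3\bigl(1+\tfrac{27}{4}\pi^3\bigr).$$
Since $2\in R^\times$, the element $64=2^6$ is a unit and $1+\tfrac{27}{4}\pi^3$ is a unit (it is $\equiv 1$ modulo $\pi$), so $v(\Delta)=3$ and $v(c_4)=v(16)=0$. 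In particular $v(c_4)<4$, so this equation is already minimal over $R$, and the reduction is bad because $v(\Delta)>0$.

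Next I would conclude that the reduction is multiplicative of type $I_3$. The $j$-invariant satisfies $v(j)=v(c_4^3/\Delta)=3v(c_4)-v(\Delta)=-3<0$, so the curve has potentially multiplicative reduction; and since moreover $v(c_4)=0$ the reduction is honestly multiplicative over $K$, of Kodaira type $I_{-v(j)}=I_{v(\Delta)}=I_3$. Concretely, the reduction modulo $\pi$ is the nodal cubic $y^2=x^2(x+1)$, whose only singular point is the node at the origin $(0,0)$, already in the position normalised by Tate's algorithm, and one has $\pi\mid a_3,a_4,a_6$ while $\pi\nmid b_2=4$; this is exactly the configuration placing us in case $I_n$ of the algorithm, with $n=v(\Delta)=3$. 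Hence the special fiber of the minimal proper regular model $\mathcal X/R$ has Kodaira type $I_3$.

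Then I would check that this multiplicative reduction is split. At the node of $y^2=x^2(x+1)$ the tangent cone is cut out by the degree-two part $y^2-x^2=0$, that is by the two lines $y=x$ and $y=-x$, which are distinct and defined over $k$ because $\operatorname{char}k\neq 2$. Thus the reduction is split multiplicative of type $I_3$, and by the Kodaira--N\'eron classification of special fibers, in the form produced by Tate's algorithm (or equivalently by explicitly resolving the rational double point of the Weierstrass surface, which in the coordinates $u=y-x$, $w=y+x$ takes the form $uw=\pi^3+(\text{terms of order}\geq 3\text{ in }u,w)$, an $A_2$-singularity), the special fiber of $\mathcal X/R$ is a split N\'eron $3$-gon: a reduced curve which is the union of three irreducible components, each isomorphic over $k$ to $\mathbf P^1_k$, cyclically arranged so that any two of the three meet transversally in exactly one point, these three intersection points being $k$-rational. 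This is precisely the assertion of the lemma.

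The argument is essentially mechanical. The only steps that genuinely use the hypotheses are the two appeals to $\operatorname{char}k\neq 2$ (to get $v(\Delta)=3$ and $v(c_4)=0$, and to see that the node is split, so that all three components and all three nodes are defined over $k$) and the perfectness of $k$, which makes $R$ excellent and guarantees that the minimal regular model and the resolution of the Weierstrass surface behave as in the classical theory. I do not anticipate a genuine obstacle; the only slightly delicate point is the bookkeeping translating the abstract ``split N\'eron $3$-gon'' into the concrete triangle of three lines described in the statement.
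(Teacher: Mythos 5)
Your proposal is correct and follows exactly the route the paper takes: the paper's entire proof is "on applique l'algorithme de Tate" with a reference to Silverman, and you have simply carried out that algorithm explicitly (minimality via $v(c_4)=0$, $v(\Delta)=3$ with $\pi\nmid b_2$ giving type $I_3$, and splitness of the node $y^2=x^2(x+1)$ since $\mathrm{char}\,k\neq 2$, so that the N\'eron $3$-gon has all components and intersection points $k$-rational). The computations check out; there is nothing to add.
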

  \begin{proof}
On applique  l'algorithme de Tate \cite[Chap.IV, \S 9]{silverman}.
  \end{proof}

\begin{ex}\label{exconcretcourbe}

(a) Soient $R=\C[[t]]$  et $F=\C((t))$.
Le mod\`ele r\'egulier minimal  ${\mathcal X}/\C[[t]]$  de la courbe elliptique sur $\C((t))$ d'\'equation affine
$$ y^2=x^3+x^2+t^3,$$
est d'apr\`es le lemme ci-dessus de type $I_{3}$.
Cela fournit un exemple concret pour le corollaire \ref{contrexcourbe}.

On peut  aussi partir d'une courbe elliptique sur $\C((t))$ dont le mod\`ele r\'egulier minimal
dans la classification de Kodaira--N\'eron (cf. \cite[Chap. IV, Thm. 8.2]{silverman})
est de type $I_{1}$, resp. $I_{2}$,  et \'eclater deux fois, resp. une fois, 
en des points convenables pour obtenir que la fibre sp\'eciale r\'eduite est
un triangle.

(b)
Soit $K'$ le corps des fonctions d'un sch\'ema $\mathcal X$ comme en (a).
C'est une extension finie (de degr\'e 2) du corps $K=\C((t))({\mathbf P}^1)=\C((t))(x)$.
Soit $G'$ un $K'$-groupe alg\'ebrique lin\'eaire. Le descendu \`a la Weil  
$G=R_{K'/K} G'$ est un $K$-groupe lin\'eaire.
Si le $K'$-groupe $G'$ est commutatif, resp. fini, resp. un tore, resp. semi-simple,
resp. semi-simple simplement connexe, les propri\'et\'es correspondantes
sont satisfaites pour le $K$-groupe $G$. On a une bijection fonctorielle en le corps
de base 
$ H^1(K',G') \simeq H^1(K, G) $
qui \`a la classe d'un $K'$-espace principal homog\`ene $E'$ sous $G'$
associe la classe du $K$-espace principal homog\`ene $E=R_{K'/K}E'$
sous $G$. Pour $G'$ commutatif, on a  
$H^{r}(K',G') \simeq H^{r}(K, G) $ pour tout entier $r \geq 0$.
Les valuations discr\`etes  sur $K$ entretiennent  avec  celles de $K'$
les relations usuelles. 
Ce proc\'ed\'e  bien connu permet  de d\'eduire de tout contre-exemple au principe local-global
par rapport aux hens\'elis\'es en les valuations discr\`etes   de $F'$ pour un $F'$-groupe 
d'un  type  donn\'e (tore, semismple, fini, commutatif)
un contre-exemple \`a ce m\^eme principe
 sur $K=\C((t))(x)$, pour un 
 $K$-groupe convenable de m\^eme type.
  \end{ex}

\begin{cor}\label{contrexanneaulocal}
Soit $R$ un anneau local normal, hens\'elien, excellent de dimension 2,
 \`a corps r\'esiduel alg\'ebriquement clos de caract\'eristique diff\'erente de 2.
Soit $p : {\mathcal X} \to \Spec R$ une d\'esingularisation de $R$,
c'est-\`a-dire que $ {\mathcal X}$ est r\'egulier et
 le morphisme $p$ est projectif et birationnel.
Supposons que le diviseur r\'eduit associ\'e \`a la fibre  
de $p$ au-dessus de l'id\'eal maximal de $R$ contient un triangle.
Choisissons les $\pi_{i} \in  K$  et $a,b,c \in K$  comme \`a
la proposition \ref{contreexempleglobalgeneral}.
Soit $E$ la $K$-vari\'et\'e d'\'equation
$$
 (X_1^2 - aY_1^2)(X_2^2 - bY_2^2)(X_3^2 - abY_3^2) = c.
 $$

Soit  $L=K(\sqrt{a},\sqrt{b})$. Soit $E'$ la $K$-vari\'et\'e
 d'\'equation
 $$\Norm_{L/K}(\Xi) = c^2.$$
 
Alors :

(i) Pour toute valuation discr\`ete $\gamma$   sur $K$,
de hens\'elis\'e $K_{\gamma}$, on a $E(K_{\gamma}) \neq \emptyset$
et $E'(K_{\gamma}) \neq \emptyset$.

(ii) On a $E(K)=\emptyset$ et $E'(K) =\emptyset$.

(iii) Il existe un module galoisien fini  $\mu$ sur le corps $K$
et une classe non nulle dans $H^2(K,\mu)$ qui est triviale dans  $H^2(K_{\gamma},\mu)$
pour chaque valuation discr\`ete $\gamma$   de $K$.

(iv) Il existe un $K$-groupe semi-simple connexe $G$ et un
\'el\'ement non trivial de $H^1(K,G)$ d'image triviale dans
  $H^1(K_{\gamma},G)$ pour chaque valuation discr\`ete $\gamma$   de $K$.
\end{cor}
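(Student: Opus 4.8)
On suit la d\'emonstration du corollaire \ref{contrexcourbe}, l'anneau de valuation discr\`ete qui y intervient \'etant ici remplac\'e par l'anneau r\'egulier $R$ de dimension $2$ de l'\'enonc\'e. Le premier point consiste \`a v\'erifier que, pour toute valuation discr\`ete $\gamma$ sur le corps $K$ (qui est aussi le corps des fractions de $R$, $p$ \'etant birationnel), l'anneau de valuation $O_{\gamma}\subset K$ contient $R$. Comme $R$ est hens\'elien de corps r\'esiduel alg\'ebriquement clos de caract\'eristique diff\'erente de $2$, toute unit\'e de $R$ est un carr\'e, donc $R^{\times}$ est un groupe $2$-divisible; le sous-groupe $\gamma(R^{\times})$ de $\Z$ est alors $2$-divisible, donc nul, d'o\`u $R^{\times}\subset O_{\gamma}$; enfin tout \'el\'ement $x$ de l'id\'eal maximal $m$ de $R$ s'\'ecrit $x=(1+x)-1$ avec $1+x\in R^{\times}$, et par suite $R=R^{\times}\cup m\subset O_{\gamma}$. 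En notant $K_{\gamma}$ le hens\'elis\'e de $K$ en $\gamma$ et $R_{\gamma}$ son anneau des entiers, on dispose ainsi d'un morphisme $\Spec R_{\gamma}\to\Spec R$ que la propret\'e de $p$ ($p$ \'etant projectif) rel\`eve, par le crit\`ere valuatif, en un morphisme dominant $\Spec R_{\gamma}\to\mathcal X$.

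Le second point consiste \`a constater que $\mathcal X$ satisfait les hypoth\`eses sur les corps r\'esiduels de la proposition \ref{contreexempleglobalgeneral}(ii). Comme $p$ est propre, tout point ferm\'e de $\mathcal X$ est au-dessus du point ferm\'e de $\Spec R$, donc a pour corps r\'esiduel une extension finie du corps alg\'ebriquement clos $k$, c'est-\`a-dire $k$ lui-m\^eme. Pour des raisons de dimension, un point de codimension $1$ de $\mathcal X$ est ou bien le point g\'en\'erique d'une composante de la fibre sp\'eciale $p^{-1}(m)$, de corps r\'esiduel le corps des fonctions d'une $k$-courbe, ou bien de corps r\'esiduel fini sur le corps r\'esiduel d'un point de codimension $1$ de $\Spec R$, lequel est le corps des fractions d'un anneau de valuation discr\`ete hens\'elien de corps r\'esiduel $k$ (normaliser $R$ modulo l'id\'eal premier correspondant). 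Dans les deux cas ce corps est de $2$-dimension cohomologique au plus $1$, puisque $\car(k)\neq 2$. La proposition \ref{contreexempleglobalgeneral}(ii), appliqu\'ee aux morphismes dominants $\Spec R_{\gamma}\to\mathcal X$, donne alors $E(K_{\gamma})\neq\emptyset$ pour toute valuation discr\`ete $\gamma$ de $K$, et la proposition \ref{contreexempleglobalgeneral}(i) donne $E(K)=\emptyset$. On a ainsi l'assertion (i) et la partie de (ii) concernant $E$.

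Enfin, les \'enonc\'es concernant $E'$, $\mu$ et $G$ se d\'eduisent de ceux concernant $E$ exactement comme dans la d\'emonstration du corollaire \ref{contrexcourbe}: on utilise l'injection fonctorielle $H^1(K,Q)\hookrightarrow H^1(K,T)$ qui envoie la classe de $E$ sur celle de $E'$, l'injection fonctorielle $H^1(K,Q)\hookrightarrow H^2(K,\mu)$ pour un module galoisien fini $\mu$ convenable (voir le \S \ref{algebre}), le fait que $E(K)\neq\emptyset$ (resp. $E'(K)\neq\emptyset$) \'equivaut \`a l'annulation de $[E]$ (resp. de $[E']$), et, pour (iv), l'argument de Serre (\cite[Chap. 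III, \S 4.7]{serreCG}): l'\'enonc\'e (iii) y tient le r\^ole du lemme 8, le lemme 9 reste valable ici puisque le groupe auxiliaire $S$ est une restriction \`a la Weil d'un produit de groupes $SL_{n}$, de sorte que $H^1(F,S)=0$ sur tout surcorps $F$ du corps de base, et la d\'eduction du th\'eor\`eme 8 \`a partir de ces deux lemmes est formelle. Aucune de ces \'etapes ne pr\'esente de difficult\'e s\'erieuse: le seul ingr\'edient nouveau par rapport au corollaire \ref{contrexcourbe} est l'observation, faite au premier point, que tout anneau de valuation discr\`ete de $K$ contient $R$, tout le reste \'etant contenu dans les propositions \ref{semilocal} et \ref{contreexempleglobalgeneral} et les rappels du \S \ref{algebre}.
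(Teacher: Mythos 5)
Votre démonstration est correcte et suit essentiellement la même voie que celle de l'article : celui-ci se réduit précisément à l'observation que tout anneau de valuation discrète de $K$ contient le groupe (infiniment) $2$-divisible $R^{\times}$, donc $R$ lui-même, le reste étant déclaré « entièrement analogue » à la démonstration du corollaire précédent. Vous explicitez simplement quelques points laissés implicites (relèvement par le critère valuatif, vérification des hypothèses sur les corps résiduels des points de codimension $1$ et $2$ de $\mathcal X$), ce qui est exact et sans incidence sur l'argument.
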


\begin{proof}
On commence par observer que   tout
anneau de valuation discr\`ete   du corps des fractions de $R$
contient le groupe infiniment 2-divisible $R^{\times}$, donc aussi l'anneau $R$.
Le reste de la d\'emonstration 
  est enti\`erement analogue \`a la d\'emonstration pr\'ec\'edente.
\end{proof}

\begin{ex}\label{exconcretlocal}

 Rappelons un exemple concret
fourni par Gabber (\cite[\S 3.4, Remark 2]{CTGiPa}) : on prend $R=\C[[x,y,z]]/(xyz+x^4+y^4+z^4)$.
On peut v\'erifier que dans la fibre d'une r\'esolution convenable $\mathcal X \to \Spec R$ au-dessus
du point singulier $(x,y,z)=(0,0,0)$ on a un triangle. Notant $K$ le corps des fractions de $R$.
On peut v\'erifier que la $K$-vari\'et\'e d'\'equation
$$ (X_1^2 - yzY_1^2)(X_2^2 - xzY_2^2)(X_3^2 - xyY_3^2) = xyz(x+y+z) $$
a des points dans tous les hens\'elis\'es de $K$ et n'a pas de point dans $K$.

 \medskip

On a dans ce cas une inclusion $\C[[x,y]]  \subset R$,
et une extension finie de corps des fractions 
$K/ \C((x,y))$.
En utilisant un argument de restriction du corps de base \`a la Weil, comme 
\`a la remarque  \ref{exconcretcourbe}, on construit
des exemples sur le corps $\C((x,y))$.
\end{ex}

\medskip

Le corollaire \ref{contrexanneaulocal} et l'exemple \ref{exconcretlocal}
r\'epondent \`a la question pos\'ee
\`a la fin du~\S  3 de \cite{CTGiPa}. On notera que pour un $K$-groupe fini  commutatif
d\'eploy\'e $\mu$ d'ordre inversible dans $R$, l'application
$$H^2(K,\mu) \to \prod_{\gamma} H^2(K_{\gamma},\mu)$$
 est injective
pour $K$ et  les valuations $\gamma$  comme au corollaire \ref{contrexcourbe} 
ou au corol\-laire~\ref{contrexanneaulocal}.  C'est une cons\'equence imm\'ediate de la propri\'et\'e
du  groupe
de Brauer de $K$ cit\'ee au d\'ebut de l'introduction.

\subsection{Exemples au-dessus d'un anneau de valuation discret complet \`a  corps r\'esiduel
non alg\'ebriquement clos}

L'hypoth\`ese que le corps r\'esiduel des points ferm\'es est s\'eparablement clos
nous a permis de donner une construction relativement simple et ind\'ependante
de la nature de la surface $\mathcal X$. 

Nous rel\^achons ici l'hypoth\`ese sur les corps r\'esiduels aux points ferm\'es,
en nous
 limitant  au cas \og semi-global \fg.

\begin{prop}\label{petitcorps}
 Soient $R$ un anneau de valuation discr\`ete 
complet
 de corps r\'esiduel $k$ parfait
  de caract\'eristique diff\'erente de 2, $F$ son corps des fractions
   et ${\mathcal X}/R$ un $R$-sch\'ema projectif r\'egulier
  int\`egre \`a fibre  g\'en\'erique une courbe
   lisse et g\'eom\'etriquement connexe.
  Soit $K$ son corps des fonctions.
   Supposons que la fibre
  sp\'eciale $X_{0}/k$ est une union transverse de trois courbes $L_{i}, i=1,2,3$,
   chacune $k$-isomorphe \`a ${\mathbf P}^1_{k}$,
  formant triangle,
  les 3 points d'intersection \'etant $k$-rationnels.
  
Quitte \`a remplacer $R$ par une extension finie \'etale, il existe 
des \'el\'ements $a,b,c \in K$ tels que
la $K$-vari\'et\'e d\'efinie par
$$
 (X_1^2 - aY_1^2)(X_2^2 - bY_2^2)(X_3^2 - abY_3^2) = c.
 $$
 n'ait pas de point dans $K$, mais ait des points dans
 tous les compl\'et\'es de $K$ pour les valuations discr\`etes 
 de ce corps.
 \end{prop}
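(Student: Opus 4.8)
Le plan est de reproduire, sur le mod\`ele $\mathcal X$ donn\'e, la construction du corollaire~\ref{contrexcourbe} : on cherche $\pi_1,\pi_2,\pi_3\in K^\times$ de diviseurs $\operatorname{div}_{\mathcal X}(\pi_i)=L_i+D_i$, avec les $D_i$ horizontaux et en position g\'en\'erale au sens des conditions (a), (b), (c) de la proposition~\ref{contreexempleglobalgeneral}, et l'on pose $a=\pi_2\pi_3$, $b=\pi_3\pi_1$, $c=\pi_1\pi_2\pi_3$. La non-existence d'un $K$-point sur $E$ est alors \emph{exactement} la proposition~\ref{semilocal}, appliqu\'ee \`a l'anneau semi-local $B$ de $\mathcal X$ aux trois sommets $m_1,m_2,m_3$ du triangle (on a $m_1=L_2\cap L_3=(\pi_2,\pi_3)$, et de m\^eme pour $m_2$ et $m_3$) ; cette \'etape n'utilise que $2\in R^\times$ et rien sur le corps $k$. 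Toute la difficult\'e est ainsi de garantir $E(K_v)\neq\emptyset$ pour toute valuation discr\`ete de rang~$1$ de $K$ --- c'est l\`a qu'intervenait l'hypoth\`ese \og $k$ s\'eparablement clos \fg\ dans la proposition~\ref{contreexempleglobalgeneral}.

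Pour cela on dispose de deux ressorts. D'abord un changement de base fini \'etale (non ramifi\'e) $R\rightsquigarrow R'$ : le corps r\'esiduel devient une extension finie s\'eparable de $k$, le sch\'ema $\mathcal X\times_R R'$ reste r\'egulier \`a fibre sp\'eciale un triangle de trois droites $\mathbf P^1$ \`a sommets rationnels, et l'on peut donc supposer $-1\in k^{\times 2}$, d'o\`u $-1$ carr\'e dans le corps r\'esiduel $\kappa(v)$ de \emph{toute} valuation discr\`ete $v$ de $K$ (chacun de ces corps contient $k$ ou $F$, et $-1\in F^{\times 2}$ aussi). Ensuite un lemme de d\'eplacement : comme $B$ a un groupe de Picard trivial, on choisit les $\pi_i$ comme ci-dessus, les $D_i$ de plus r\'eduits, et surtout de fa\c con que pour $i\neq j$ la restriction $\overline{\pi_i}|_{L_j}$ soit un carr\'e du corps $k(L_j)$ (tangence paire de $D_i$ le long de $L_j$, puis --- apr\`es une nouvelle extension finie de $k$ absorbant un nombre fini de constantes dominantes --- carr\'e de ces constantes) ; on impose aussi \`a la partie horizontale de $\operatorname{div}(\pi_i)$ de se sp\'ecialiser dans le lieu lisse de $L_i$, de fa\c con qu'en chaque composante $\gamma$ de $D_i$ l'unit\'e pertinente parmi $a,b,ab$ se r\'eduise aussi en un carr\'e en $\gamma$.

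Ce choix effectu\'e, on v\'erifie $E(K_v)\neq\emptyset$ en discutant selon le centre de $v$ sur $\mathcal X$ ($R$ \'etant complet, $v|_F$ est triviale ou \'equivalente \`a $v_F$). \emph{Si ce centre est un point ferm\'e $M$}, les conditions impos\'ees aux $D_i$ font que l'un des $\pi_i$, soit $\pi_{i_0}$, est une unit\'e en $M$, de r\'eduction un carr\'e de $\kappa(M)$ par construction ; comme $-1\in\kappa(v)^{\times 2}$, l'\'el\'ement $-\pi_{i_0}$ est alors un carr\'e de $K_v$, donc $c$ diff\`ere de l'un de $-a$, $-b$, $-ab$ par un carr\'e de $K_v$, et l'on \'ecrit une solution \'evidente de $E$ sur $K_v$. \emph{Si ce centre est un point $\gamma$ de codimension~$1$} --- si bien que $v$ est la valuation divisorielle $v_\gamma$ --- deux cas se pr\'esentent. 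Si $a,b,c$ sont tous des unit\'es en $\gamma$ (soit $\gamma\notin L_1\cup L_2\cup L_3\cup D_1\cup D_2\cup D_3$), alors $E$ est lisse sur $\mathcal O_\gamma^h$ et, par un argument de Hensel, $E(K_\gamma)\neq\emptyset$ \'equivaut \`a $\bar E(\kappa(\gamma))\neq\emptyset$ ; or $\kappa(\gamma)$ est fini sur $F$, donc un corps local ou un corps $C_1$, et pour un tel corps le groupe des normes de l'extension biquadratique $\kappa(\gamma)(\sqrt{\bar a},\sqrt{\bar b})$ co\"{\i}ncide avec l'intersection des trois groupes de normes quadratiques, lesquels contiennent ${\bar c}^{2}$, d'o\`u $[\bar E_c]=0$ et $\bar E(\kappa(\gamma))\neq\emptyset$. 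Sinon $\gamma$ est l'un des $L_i$ ou une composante d'un $D_i$, et par le choix des $\pi_i$ l'un de $a,b,ab$ est un carr\'e de $K_\gamma$ ; le facteur $X_j^2-(\cdot)Y_j^2$ correspondant repr\'esente alors tout $K_\gamma^\times$ (lemme~\ref{facile}~(i)) et $E(K_\gamma)\neq\emptyset$. Les \'enonc\'es analogues pour $E'$ et pour un module galoisien fini $\mu$ s'en d\'eduiraient comme aux corollaires~\ref{contrexcourbe} et~\ref{contrexanneaulocal}, via la restriction de Weil depuis $L$ et les injections $H^1(K,Q)\hookrightarrow H^1(K,T)\hookrightarrow H^2(K,\mu)$, mais la pr\'esente proposition ne concerne que $E$.

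Le point d\'elicat est le lemme de d\'eplacement du deuxi\`eme paragraphe : il faut r\'ealiser \`a la fois la position g\'en\'erale de la proposition~\ref{contreexempleglobalgeneral}, la tangence paire des $D_i$ le long des $L_j$, le fait que $\overline{\pi_i}|_{L_j}$ soit un carr\'e, et la sp\'ecialisation de la partie horizontale de $\operatorname{div}(\pi_i)$ dans le lieu lisse de $L_i$, tout en gardant $\operatorname{div}_{\mathcal X}(\pi_i)=L_i+(\text{partie horizontale})$ --- ce qui, sur une surface r\'eguli\`ere au-dessus d'un anneau de valuation discr\`ete complet, est une contrainte dans le groupe des classes de $\mathcal X$. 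C'est \'el\'ementaire mais demande du soin ; le changement de base fini \'etale de $R$ y sert \`a la fois \`a rendre $-1$ un carr\'e et \`a transformer les constantes dominantes r\'esiduelles en carr\'es.
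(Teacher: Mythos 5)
Votre squelette est bien celui du texte : m\^eme choix de $a=\pi_2\pi_3$, $b=\pi_3\pi_1$, $c=\pi_1\pi_2\pi_3$, non-existence du $K$-point par la proposition~\ref{semilocal} (ind\'ependante du corps r\'esiduel), et extension finie \'etale de $R$ pour tuer un nombre fini d'obstructions r\'esiduelles. Mais le \og lemme de d\'eplacement \fg\ sur lequel repose toute votre v\'erification locale est impossible. Pour $i\neq j$, la restriction $\overline{\pi_i}|_{L_j}$ ne peut \emph{jamais} \^etre un carr\'e de $k(L_j)$ : comme $\operatorname{div}_{\mathcal X}(\pi_i)=L_i+D_i$ avec $D_i$ \'evitant les sommets du triangle, et comme $L_i$ coupe $L_j$ transversalement en un sommet, le diviseur de $\overline{\pi_i}|_{L_j}$ sur la courbe $L_j$ a multiplicit\'e exactement $1$ en ce sommet, donc n'est pas divisible par $2$ dans $\operatorname{Div}(L_j)$ --- aucune \og tangence paire \fg\ des $D_i$ ni extension des constantes n'y rem\'edie (et faire passer $D_i$ par le sommet d\'etruirait $m_\ell=(\pi_i,\pi_j)$, donc la proposition~\ref{semilocal}). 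Pour la m\^eme raison, au point g\'en\'erique $\gamma=L_1$ \emph{aucun} de $a$, $b$, $ab$ n'est un carr\'e de $K_\gamma$ : $b$ et $ab$ y sont de valuation impaire, et $\overline{a}|_{L_1}=\overline{\pi_2\pi_3}|_{L_1}$ a pour diviseur $m_2+m_3+(D_2+D_3)\cdot L_1$, qui n'est pas pair. Votre traitement des valuations centr\'ees sur les $L_i$ --- points g\'en\'eriques comme points ferm\'es g\'en\'eraux, c'est-\`a-dire l'essentiel des cas puisque tout point ferm\'e de $\mathcal X$ est sur la fibre sp\'eciale --- s'effondre donc.

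La condition r\'ealisable, et c'est ce que fait le texte, est plus faible : on n'exige pas que $\overline{\pi_j}|_{L_i}$ soit un carr\'e, mais seulement que l'alg\`ebre de quaternions $(\pi_j,\pi_\ell)$ devienne triviale sur $k'(L_i)$, ce que le th\'eor\`eme de Tsen fournit apr\`es une extension finie convenable $k'/k$. Jointe \`a $-1\in k'^{\times 2}$, cette trivialit\'e permet de r\'esoudre $X^2-\pi_2\pi_3Y^2=-\pi_2$ sur $K_{L_1}$ et d'exhiber le point $(u,v,0,1,1,0)$ de $E$ ; le m\^eme argument se propage aux points ferm\'es $M\notin T$ (l'alg\`ebre est non ramifi\'ee au voisinage de $M$ et nulle dans $\Br \kappa(M)$, donc nulle sur le hens\'elis\'e), et les points du lieu exceptionnel fini $T$ sont trait\'es en adjoignant \`a $k$ les racines carr\'ees d'un nombre fini de valeurs unit\'es $u_P$. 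Signalons un second accroc : pour une valuation centr\'ee en un point ferm\'e de la fibre g\'en\'erique avec $a,b,c$ unit\'es, vous invoquez que $\kappa(\gamma)$, fini sur $F$, est un corps local ou $C_1$ ; or l'\'enonc\'e n'impose \`a $k$ que d'\^etre parfait de caract\'eristique $\neq 2$ (par exemple $k=\R$), et $F=k((t))$ n'est alors ni l'un ni l'autre. Le texte traite ces valuations en descendant au point ferm\'e $M$ o\`u l'adh\'erence du centre --- spectre d'un anneau local hens\'elien fini sur $R$ --- rencontre la fibre sp\'eciale, et en y appliquant la m\^eme dichotomie $M\in T$ ou $M\notin T$.
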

 
 \begin{proof} 
  Le foncteur $R_{1} \to R_{1}\otimes_{R}k = k_{1}$ est une \'equivalence de cat\'egories
 entre les extensions finies \'etales d'alg\`ebres locales hens\'eliennes $R_{1}/R$ et
 les extensions finies s\'eparables de corps $k_{1}/k$ (\cite[Corollaire, p. 84]{raynaud}).

  Pour $R_{1}/R$ de ce type, 
 le sch\'ema ${\mathcal X}\times_{R}R_{1}$ est un sch\'ema r\'egulier int\`egre,
 de fibre sp\'eciale $X_{0}\times_{k}k_{1}$ admettant exactement la m\^eme configuration
 que $X_{0}/k$.
 
 Soit  $A$ le semi-localis\'e en $m_1, m_2, m_3,$ et soient, par abus de langage,
$m_1, m_2, m_3$  les id\'eaux maximaux associ\'es. Soit $\pi_1
\in A$ satisfaisant 
${\rm div}_{\mathcal X}(\pi_1) =  L_1 + D_1$,   le support de $D_1$
ne contenant  aucun de
$m_1, m_2,  m_3$. Soit  $\pi_2 \in A$ tel
${\rm div}_{\mathcal X}(\pi_2) =  L_2 +  D_2$,
le support de $D_{2}$ sans composante commune avec celui de $D_{1}$,
 ne contenant aucun des $m_{i}$
ni aucun des points d'une intersection  $L_i \cap Supp(D_1)$.

Soit $\pi_3 \in A$ tel que
${\rm div}_{\mathcal X}(\pi_3) = L_3 + D_3$,   le support de $D_3$
sans composante commune avec celui de $D_{1}$ ou de $D_{2}$,
ne contenant aucun des $m_{i}$  ni aucun des points d'une intersection 
 $L_i
\cap Supp(D_1)$ ou $L_i \cap Supp( D_2)$.

Alors   $\pi_1, \pi_2, \pi_3
\in A$ sont des \'el\'ements premiers d\'efinissant respectivement  $L_1, L_2, L_3$.
Dans $A$, on a
$m_1 = (\pi_2, \pi_3)$, $m_2 = (\pi_1, \pi_3) $ and $m_3 = (\pi_1,
\pi_2)$. 

Soit  $a = \pi_2\pi_3$, $b = \pi_3\pi_1$ et  $c =
\pi_1\pi_2\pi_3$.  Soit $E$  la $K$-vari\'et\'e  d\'efinie par
$$(X_1^2 - aY_1^2)(X_2^2 - bY_2^2)(X_3^2 - abY_3^2) = c.$$

Notons $\overline{k}$ une cl\^oture alg\'ebrique de $k$.
Pour chaque permutation $(i,j,\ell)$ 
de $(1,2,3)$, il existe une extension finie $k_{i}/k$
telle que la classe de l'alg\`ebre de quaternions d\'efinie par $(\pi_{j},\pi_{\ell})$ sur le corps des fonctions 
$k(L_{i})$ s'annule sur $k_{i}(L_{i})$. 
Le groupe de Brauer de ${\overline k}(L_{i})$
est en effet nul (th\'eor\`eme de Tsen).

Soit  $ T$ l'ensemble fini de points ferm\'es 
$$T= \cup _{i,j} (L_i \cap Supp(D_j)) \cup   \{ m_1, m_2, m_3 \}.$$
Pour chaque $P \in T$, l'un  au moins des $\pi_{i}$ est une unit\'e en $P$.
On choisit un tel $\pi_{i}$, et on pose $u_{P}=\pi_i(P) \in \kappa(P)^{\times}$.

Soit $k'$ une extension galoisienne finie de $k$  contenant tous
 les  corps $k_i$ et tous les
corps $\kappa(P)(\sqrt{u_P})$ for   $P \in T$, et aussi $\sqrt{-1}$.

Rempla\c cons  $R$ par l'extension finie \'etale locale  $R'/R$  de corps r\'esiduel $k'$,
et $F$ par le corps des fractions $F'$ de $R'$.

Quitte \`a changer les notations, sur ce nouvel  anneau, renomm\'e $R$,
de corps des fractions renomm\'e $F$,
on a $k=k_{i}$ pour tout $i$ et  $k=\kappa(P)(\sqrt{u_P})$  pour tout $P\in T$.

 Comme la preuve de la proposition \ref{semilocal} ne d\'epend pas du corps r\'esiduel $k$,
 elle donne $E(K)=\emptyset$.

Montrons que $E$ poss\`ede des points dans tout hens\'elis\'e $K_{v}$
de $K$ en une valuation $v$ discr\`ete.
Soit $x \in {\mathcal X}$ le centre de la valuation.

Supposons d'abord que  le point $x$ 
est   le point g\'en\'erique de l'une des composantes $L_{i}$ de la fibre sp\'eciale $X_{0}$.
Supposons que $x$ est le point g\'en\'erique de $L_{1}$.
L'alg\`ebre de quaternions $(\pi_{2},\pi_{3})$ est d\'eploy\'ee sur le corps $k(L_{1})$.
Il en est donc de m\^eme de l'alg\`ebre $(\pi_{2}\pi_{3},-\pi_{2})$.
L'\'equation
$$ X^2-\pi_{2}\pi_{3}Y^2=-\pi_{2}$$
a donc une solution $ (u,v)$  dans le hens\'elis\'e de l'anneau local en $x$,
et $E$ poss\`ede donc le point rationnel $(u,v,0,1,1,0)$ sur $K_{x}$.
Le calcul aux points g\'en\'eriques de $L_{2}$ et $L_{3}$ est le m\^eme.

Supposons que $x$ est un point ferm\'e $M$ de $\mathcal X$, donc un point ferm\'e
de la fibre sp\'eciale.
 
 Supposons d'abord $M \notin T$. Le point $M$ appartient \`a l'une des composantes $L_{i}$
 de la fibre sp\'eciale. Supposons que ce soit $L_{1}$. Alors $\pi_{2}$ et $\pi_{3}$ sont des unit\'es en $M$.
 L'alg\`ebre de quaternions $(\pi_{2},\pi_{3})$ est donc non ramifi\'ee au voisinage de $M$.
 Dans le corps r\'esiduel de $L_{1}$, sa classe  est nulle. Ceci implique que sa classe est nulle dans le corps
 r\'esiduel de $M$, et donc aussi dans le groupe de Brauer du hens\'elis\'e
 de l'anneau local de ${\mathcal X}$ en $M$.
L'\'equation
$$ X^2-\pi_{2}\pi_{3}Y^2=-\pi_{2}$$
a donc une solution $ (u,v) $ dans $K_{M}$,
et $E$ poss\`ede donc le point rationnel $(u,v,0,1,1,0)$ sur $K_{M}$.
Le calcul pour $M$ dans  $L_{2}$ ou $L_{3}$ est le m\^eme.

Supposons   $M \in T$, donc $\kappa(M)=k$,  et $M \in L_{1}$. Alors $\pi_{2}$ ou $\pi_{3}$
est une unit\'e en $M$, et pour l'un au moins d'entre eux, disons que ce soit $\pi_{2}$,
la valeur
$\pi_{2}(M) \in k^{\times}$ est un carr\'e non nul, donc $\pi_{2}$ est un carr\'e dans le hens\'elis\'e
en $M$ et donc dans $K_{M}$. 
La $K$-vari\'et\'e $E$ admet alors un point \'evident de la forme $(0,u,1,0,0,1)$ dans $K_{M}$.
Les autres cas sont analogues.

Si le point $x$ est de codimension 1 et situ\'e sur la fibre g\'en\'erique $X={\mathcal X}\times_{R}F$,
comme $R$ est hens\'elien, l'adh\'erence de $x$ sur ${\mathcal X}$ rencontre la fibre sp\'eciale
en un unique point ferm\'e, qu'on notera $M$. Cette adh\'erence est le spectre d'un
anneau local hens\'elien $S$, fini sur $R$, 
de corps r\'esiduel $\kappa(M)$.
On reprend alors la discussion ci-dessus. 

Si l'on a  $M \notin T$, et $M \in L_{1}$,
 l'alg\`ebre de quaternions $(\pi_{2},\pi_{3})$ est non ramifi\'ee au voisinage de $M$,
 donc en particulier sur $\Spec S$, et sa classe est nulle dans le corps r\'esiduel de $S$,
 donc aussi dans $\Br S$, donc dans $\Br \kappa(x)$.
 L'\'equation
$$ X^2-\pi_{2}\pi_{3}Y^2=-\pi_{2}$$
a donc une solution $ (u,v) $ dans $K_{x}$,
et $E$ poss\`ede donc le point rationnel $(u,v,0,1,1,0)$ sur $K_{x}$.
Le calcul pour $M$ dans  $L_{2}$ ou $L_{3}$ est le m\^eme.

Si l'on a $M \in T$, donc $\kappa(M)=k$,  et $M \in L_{1}$.
Alors par exemple $\pi_{2}$ est une unit\'e en $M$
et un carr\'e dans le corps r\'esiduel, il induit donc une unit\'e sur $S$
qui est un carr\'e dans le corps r\'esiduel de $S$, c'est donc un carr\'e dans $S$
et dans le corps des fractions de $S$.
La $K$-vari\'et\'e $E$ admet alors un point \'evident de la forme $(0,u,1,0,0,1)$ dans $K_{M}$.
Les autres cas sont analogues.
\end{proof}

\begin{rema}
On peut ainsi de multiples fa\c cons produire une courbe projective et lisse $X$ sur un corps $p$-adique,
de corps des fonctions $K$,
et une $K$-vari\'et\'e $E$, espace homog\`ene d'un $K$-tore, telle
que $E$ ait des points dans tous les compl\'et\'es de $K$
en ses valuations discr\`etes, mais n'ait pas de $K$-point.
On fabrique un exemple concret en partant de la courbe sur le corps $\Q_{p}$ ($p\neq 2$)
d'\'equation
$$y^2=x^3+x^2+p^3$$
dont le mod\`ele propre minimal r\'egulier sur $\Z_{p}$ est d'apr\`es le lemme \ref{tate} du type voulu,
et en passant \`a une extension finie non ramifi\'ee convenable de $\Q_{p}$ suivant la proc\'edure
d\'ecrite dans la proposition \ref{petitcorps}.

 En utilisant un
  argument de restriction du corps de base \`a la Weil, comme 
  dans l'exemple
 \ref{exconcretcourbe} (b), 
 sur tout corps $\Q_{p}(x)$,  avec $p\neq 2$, on peut donner des exemples 
 de tores et  de groupes semi-simples ayant un espace principal homog\`ene
 qui a des points dans tous les hens\'elis\'es de $\Q_{p}(x)$ par rapport aux valuations discr\`etes,
 mais qui n'a pas de point sur $\Q_{p}(x)$.
  
\end{rema}

\subsection{Traduction  des exemples   \`a la Harbater--Hartmann--Krashen}\label{traduction}

Soient $R$ un anneau de valuation discr\`ete complet, $k$ son corps r\'esiduel
et $F$ son corps des fractions. Soit $K$ le corps des fonctions d'une courbe
projective, lisse, g\'eom\'etriquement int\`egre sur $F$. Soit 
${\mathcal X}$  un sch\'ema r\'egulier int\`egre de dimension 2, propre et
 plat sur  Spec$(R)$, de corps des fonctions $K$.
 Soit $X_{0}$ la fibre sp\'eciale r\'eduite de ${\mathcal X}$ 
 et soit  ${\mathcal P}$ un ensemble fini de points ferm\'es de 
 $X_{0}$ qui contient tous les points singuliers de $X_{0}$.
 Soit
  ${\mathcal U}$  l'ensemble des composantes connexes de
$X_{0} \setminus {\mathcal P}$.  Pour tout point
 $P$ de ${\mathcal X}$, notons 
  $K_P$ le corps des fractions du compl\'et\'e de l'anneau local 
  de $\mathcal X$ en $P$. Soit $t \in R$ une uniformisante.
  
Pour $U \in {\mathcal U}$,  notons
$R_U$ le sous-anneau de $K$ form\'e des fonctions rationnelles sur $\mathcal X$
qui sont r\'eguli\`eres aux points de $U $. Soit 
$K_U$ le corps des fractions de la compl\'etion $t$-adique de 
$R_U$.  

Soit $G$ un groupe alg\'ebrique lin\'eaire lisse sur $K$.
On note $$\cyr{X}_{\mathcal P}({\mathcal X}, G) = {\rm ker} [H^1(K, G) \to
\prod_{\zeta \in {\mathcal P} \cup {\mathcal U}} H^1(K_\zeta, G)] $$
et
$$
\cyr{X}_0({\mathcal X}, G) = {\rm ker} [ H^1(K, G) \to
\prod_{P \in X_{0}} H^1(K_P, G)].$$
Dans cette derni\`ere d\'efinition, $P$ parcourt tous les points de la fibre sp\'eciale,
y compris les points g\'en\'eriques des composantes.

Harbater, Hartmann et Krashen \cite[Thm. 3.7]{HHK1}  ont  montr\'e que si
 $G$ est connexe et  $K$-rationnel, alors  
$\cyr{X}_{\mathcal P}({\mathcal X}, G ) = 1$. 
Dans  \cite[Cor. 5.9]{HHK2}, ils montrent que  pour
tout groupe alg\'ebrique lin\'eaire
$G$ sur $K$,
$\cyr{X}_0 ({\mathcal X}, G) =  \cup_{\mathcal P}\cyr{X}_{\mathcal P}({\mathcal X}, G)$,
o\`u 
${\mathcal P}$ parcourt tous les ensembles finis de points ferm\'es de 
${\mathcal X}$
contenant tous les points singuliers de
$X_{0}$.

Ils montrent \'egalement \cite[Prop. 8.4]{HHK2} que l'on a  une suite exacte d'ensembles point\'es
$$1 \to \cyr{X}_0 ({\mathcal X}, G)  \to \cyr{X}(K,G) \to \prod_{P \in X_{0,0}} H^1(K_{P},G).$$
Ici $X_{0,0}$ est l'ensemble des points ferm\'es de $X_{0}$, et
$$ \cyr{X}(K,G) = \Ker [H^1(K,G) \to \prod_{v\in \Omega} H^1(K_{v},G)].$$

\begin{prop}
Il existe un anneau de valuation discr\`ete complet $R$ de corps r\'esiduel
$k$ s\'eparablement clos de caract\'eristique diff\'erente de 2,
de corps des fractions $F$, un $R$-sch\'ema r\'egulier int\`egre, propre
et plat sur $\Spec R$ de fibre g\'en\'erique une $F$-courbe projective, lisse
g\'eom\'etriquement int\`egre, de corps des fonctions $K$, et un $K$-groupe
connexe lisse $G$, non $K$-rationnel, 
et  un ensemble $\mathcal P$ de points ferm\'es comme ci-dessus
 tels qu'aucun des ensembles 
$ \cyr{X}_0 ({\mathcal X}, G) $, $\cyr{X}_{\mathcal P}({\mathcal X}, G)$ et 
 $\cyr{X}(K,G)$ ne soit r\'eduit \`a un \'el\'ement.
\end{prop}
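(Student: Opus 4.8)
Le plan est de prendre l'un des contre-exemples déjà obtenus dans le cas \og semi-global \fg\ et de le réinterpréter dans le formalisme du paragraphe \ref{traduction}. Concrètement, je choisirais $R=\C[[t]]$, $F=\C((t))$, puis pour $\mathcal X/R$ le modèle régulier minimal de la courbe elliptique sur $F$ d'équation affine $y^2=x^3+x^2+t^3$ : d'après le lemme \ref{tate}, ce modèle est de type $I_3$, sa fibre spéciale réduite $X_0$ étant un triangle $L_1,L_2,L_3$ de droites projectives dont le lieu singulier est l'ensemble $\{m_1,m_2,m_3\}$ de leurs trois points d'intersection. Soit $K$ le corps des fonctions de $\mathcal X$. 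L'exemple \ref{exconcretcourbe} (a) et le corollaire \ref{contrexcourbe} fournissent alors $a,b,c\in K^{\times}$, la $K$-variété $E$ d'équation $(X_1^2-aY_1^2)(X_2^2-bY_2^2)(X_3^2-abY_3^2)=c$, un module galoisien fini $\mu$, un $K$-groupe semi-simple connexe $G$ et une classe non triviale $\xi\in H^1(K,G)$ dont l'image est triviale dans $H^1(K_\gamma,G)$ pour toute valuation discrète $\gamma$ de $K$. Comme la restriction de $\xi$ à chaque complété $K_v$ se factorise par l'hensélisé $K_\gamma$, elle y est triviale, donc $\xi\in\cyr{X}(K,G)$ et $\cyr{X}(K,G)\neq 1$.

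L'étape suivante est de montrer que $\xi$ appartient déjà à $\cyr{X}_0(\mathcal X,G)$, c'est-à-dire que $\xi$ est triviale dans $H^1(K_P,G)$ pour tout point $P$ de la fibre spéciale $X_0$, y compris les points génériques des composantes $L_i$. Je vérifierais d'abord que les hypothèses de la proposition \ref{contreexempleglobalgeneral} (ii) sont satisfaites par $\mathcal X$ : les corps résiduels des points fermés de $\mathcal X$ sont des extensions finies de $\C$, donc séparablement clos, et les corps résiduels des points de codimension $1$ (corps de fonctions d'une variable sur $\C$ au-dessus du point fermé de $\Spec R$, extensions finies de $\C((t))$ au-dessus du point générique) sont de $2$-dimension cohomologique au plus $1$. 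Ensuite, pour $P\in X_0$, l'anneau local complété $\widehat{O}_{\mathcal X,P}$ est un anneau local intègre hensélien et le morphisme $\Spec \widehat{O}_{\mathcal X,P}\to\mathcal X$ est dominant ; la proposition \ref{contreexempleglobalgeneral} (ii) donne donc $E(K_P)\neq\emptyset$, où $K_P$ est le corps des fractions de $\widehat{O}_{\mathcal X,P}$. En transportant cette annulation par les injections fonctorielles en le corps de base $H^1(\cdot,Q)\hookrightarrow H^1(\cdot,T)\hookrightarrow H^2(\cdot,\mu)$ du paragraphe \ref{algebre}, puis par la construction de Serre — dont le groupe auxiliaire $S$, restriction à la Weil d'un produit de groupes $SL_n$, vérifie $H^1(L,S)=0$ pour tout surcorps $L$ de $K$ —, on obtient que $\xi$ est triviale dans $H^1(K_P,G)$. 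D'où $\xi\in\cyr{X}_0(\mathcal X,G)$, et $\cyr{X}_0(\mathcal X,G)\neq 1$.

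Pour conclure, j'utiliserais la décomposition $\cyr{X}_0(\mathcal X,G)=\bigcup_{\mathcal P}\cyr{X}_{\mathcal P}(\mathcal X,G)$ de \cite[Cor.~5.9]{HHK2}, la réunion portant sur les ensembles finis $\mathcal P$ de points fermés de $X_0$ contenant $\{m_1,m_2,m_3\}$ : il existe donc un tel $\mathcal P$ avec $\xi\in\cyr{X}_{\mathcal P}(\mathcal X,G)$, et alors $\cyr{X}_{\mathcal P}(\mathcal X,G)\neq 1$. La non-$K$-rationalité de $G$ en résulte gratuitement : si le $K$-groupe connexe $G$ était $K$-rationnel, le théorème \cite[Thm.~3.7]{HHK1} rappelé au paragraphe \ref{traduction} donnerait $\cyr{X}_{\mathcal P}(\mathcal X,G)=1$, contredisant ce qui précède. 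Ainsi les trois ensembles $\cyr{X}_0(\mathcal X,G)$, $\cyr{X}_{\mathcal P}(\mathcal X,G)$ et $\cyr{X}(K,G)$ ne sont pas réduits à un élément, ce qui achèvera la démonstration.

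Le point le plus délicat sera le passage de $E(K_P)\neq\emptyset$ à la trivialité de $\xi$ dans $H^1(K_P,G)$ : il faut suivre avec soin la chaîne reliant le tore $Q$ au groupe semi-simple $G$ (le tore $T$, le module fini $\mu$, la construction de Serre) et invoquer explicitement l'annulation $H^1(L,S)=0$ du groupe auxiliaire sur tout surcorps $L$ de $K$. Il faudra aussi veiller, dans la vérification de l'appartenance à $\cyr{X}_0(\mathcal X,G)$, à traiter non seulement les points fermés de $X_0$ mais aussi les points génériques des $L_i$, dont les anneaux locaux complétés sont des anneaux de valuation discrète complets.
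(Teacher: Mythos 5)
Votre démonstration est correcte et suit pour l'essentiel la même voie que celle de l'article : on part du contre-exemple elliptique de type $I_3$ (lemme \ref{tate}, exemple \ref{exconcretcourbe}, corollaire \ref{contrexcourbe}), on utilise la proposition \ref{contreexempleglobalgeneral} (ii) pour obtenir la trivialité locale en tous les $K_P$, puis les rappels du paragraphe \ref{traduction} (suite exacte d'ensembles pointés, $\cyr{X}_0=\cup_{\mathcal P}\cyr{X}_{\mathcal P}$, et \cite[Thm.~3.7]{HHK1} pour la non-$K$-rationalité). Vous explicitez simplement davantage le transport de $E(K_P)\neq\emptyset$ à $H^1(K_P,G)$ via la construction de Serre, ce que l'article laisse implicite.
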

 
 \begin{proof}
En utilisant la proposition \ref{contreexempleglobalgeneral}, le corollaire \ref{contrexcourbe} et
 les exemples \ref{exconcretcourbe}, on trouve un $K$-groupe $G$ et un \'el\'ement
 non trivial dans  $\cyr{X}(K,G)$ dont l'image dans chaque $ H^1(K_{P},G)$
 pour $P$ point ferm\'e de $\mathcal X$ est triviale (voir le point  (ii) de la proposition 
  \ref{contreexempleglobalgeneral}). L'\'enonc\'e r\'esulte alors des rappels ci-dessus.
 \end{proof}

\section{Une descente}\label{paradescente}

  Soit $k$ un corps s\'eparablement clos de caract\'eristique diff\'erente de 2.
Soit $R$ une $k$-alg\`ebre locale int\`egre, excellente, hens\'elienne,
de corps r\'esiduel $k$.
Soit  $\mathcal X$ un sch\'ema r\'egulier int\`egre de dimension 2 
\'equip\'e d'un morphisme projectif
 $ p : {\mathcal X} \to \Spec R$. On suppose que l'on est dans l'une des
 situations suivantes :

(a) L'anneau $R$ est un anneau de valuation discr\`ete,
 les fibres de $p$
sont de dimension 1, la fibre g\'en\'erique est lisse et g\'eom\'etriquement int\`egre.

(b) L'anneau  $R$ est  de dimension 2, et $p$ est birationnel.

 Soit $K$ le corps des fonctions rationnelles de $\mathcal X$. 
Soient $a,b,c \in K^{\times}$. Soit $E$ la $K$-vari\'et\'e d\'efinie par l'\'equation
 $$(X_{1}^2-aY_{1}^2)(X_{2}^2-bY_{2}^2)(X_{3}^2-abY_{3}^2)=c.$$

On suppose que le support de la r\'eunion des diviseurs de $a$ et $b$ et $c$  sur $\mathcal X$
est \`a croisements normaux stricts.

\begin{theo}\label{descente}
Soit $A=(X_{1}^2-aY_{1}^2,b) \in \Br E$.
S'il existe une famille de points locaux $P_{\gamma} \in E(K_{\gamma})$ pour $\gamma
 \in {\mathcal X}^{(1)}$ et $K_{\gamma}$ le hens\'elis\'e de $K$ en $\gamma$
telle que la famille
$\partial_{\gamma}(A(P_{\gamma}))$ pour les $\gamma \in {\mathcal X}^{(1)}$
soit dans le noyau de
$$ \oplus_{\gamma \in {\mathcal X}^{(1)}} H^1(\kappa(\gamma),\Z/2) \to \oplus_{M \in {\mathcal X}^{(2)}} \Z/2,$$
alors $E(K) \neq \emptyset$.
\end{theo}

\begin{proof}

  Si  l'un de $a$, $b$, $ab$ est un carr\'e dans $K$,
alors $E(K) \neq \emptyset$. Supposons  
donc qu'aucun de  $a$, $b$, $ab$ n'est un carr\'e dans $K$.
Soit $L = K(\sqrt{b})$. 

Par un r\'esultat d'Abhyankar \cite[Theorem 8, p.~77]{abhyankar},
quitte \`a remplacer $ {\mathcal X}$ par un \'eclat\'e,
on peut supposer que la  cl\^oture int\'egrale de 
${\mathcal  X}$ dans $L$  est un sch\'ema r\'egulier.
D'apr\`es la proposition \ref{eclat}, l'hypoth\`ese d'existence d'une famille
$\{P_{\gamma}\}$ comme dans l'\'enonc\'e est pr\'eserv\'ee par tout
\'eclatement.

 Comme  $R$ est une $k$-alg\`ebre,
ce qui garantit la validit\'e de la conjecture de Gersten pour $ \mathcal X$,
et comme le  corps $k$ est s\'eparablement clos, 
  la proposition~ \ref{blochogus} \'etablit l'existence de $\alpha \in {}_{2}\Br(K)$ 
 tel que pour tout $\gamma \in {\mathcal X}^{(1)}$, 
  on ait  
 $$\partial  _{\gamma}(\alpha)=\partial_{\gamma}(A(P_{\gamma})) \in
  H^1(\kappa(\gamma), \mathbf{Z}/2) , $$ 
  et donc, puisque $\Br \kappa(\gamma)=0$, 
 $$ \alpha= A(P_{\gamma}) \in \Br K_{\gamma}.$$

Montrons que la classe $\alpha \in \Br K$ s'annule dans $\Br L$.
Soit ${\mathcal  Z}$ la cl\^oture int\'egrale de ${\mathcal  X}$ 
dans
$L$,
qui est donc un sch\'ema r\'egulier, fini et plat sur $\mathcal X$.
Soit  $\zeta \in
{\mathcal  Z}^{(1)}$. Son image    $\gamma \in {\mathcal  X}$ 
est aussi de codimension 1.
La restriction de la valuation discr\`ete  $v_{\zeta}$ de $L$ \`a $K$
est la valuation discr\`ete  $v_{\gamma}$.  Puisque 
$\alpha = A(P_\gamma) \in \Br K_{\gamma}$ et 
 $A  = (X_1^2 - aY_1^2, b)$,  on  a $\alpha \otimes L_\zeta
= 0 \in \Br L_{\zeta}$.  Ceci vaut pour tout point $\zeta$ de codimension 1
sur la surface r\'eguli\`ere $\mathcal Z$, laquelle est propre
sur un anneau local, normal,   hens\'elien,  extension finie
de $R$, de corps r\'esiduel s\'eparablement clos de caract\'eristique diff\'erente de 2.
La proposition~\ref{blochogus} appliqu\'ee \`a $\mathcal Z$
donne alors $\alpha \otimes _KL  = 0 \in \Br L$.

Ceci implique que $\alpha \in \Br K$  est la classe d'une alg\`ebre de quaternions $(\rho,b)$
 avec $\rho \in K^{\times}$. Pour tout $\gamma \in {\mathcal X}^{(1)}$, on a
 $$ A(P_{\gamma}) =(\rho,b) \in \Br K_{\gamma}.$$

Par \'eclatements successifs  
on trouve un morphisme projectif birationnel $\mathcal Y \to \mathcal X$
tel que sur $\mathcal  Y$ la r\'eunion des  supports des diviseurs de $a,b, c$ et $\rho$ 
et aussi de la fibre sp\'eciale
forment un diviseur \`a croisements normaux dont deux composantes
irr\'eductibles 
se coupent au plus en un point.

D'apr\`es la proposition \ref{eclat},
on peut trouver sur un tel $\mathcal Y$
des points $P_{\gamma}$ compl\'etant ceux qui viennent de $\mathcal X$
et tels que la nouvelle famille $\partial_{\gamma}(A(P_{\gamma}))$ soit dans
l'homologie du complexe de Bloch-Ogus de $\mathcal Y$.
La proposition \ref{descentealgebre} appliqu\'ee \`a $\mathcal Y$ assure l'existence de $\beta \in \Br K$
d'image la famille des $\partial_{\gamma}(A(P_{\gamma}))$. 

La fonctorialit\'e covariante du complexe de Bloch-Ogus (Proposition \ref{blochogus})
donne un 
diagramme commutatif de suites exactes
\[\xymatrix{
 0    \ar[r]  & {}_{2}\Br K  \ar[d]  \ar[r]  & \bigoplus_{\gamma \in {{\mathcal Y}}^{(1)}} \hskip1mmH^1(\kappa(\gamma),\Z/2)  \ar[d]   \\
  0   \ar[r]  & {}_{2}\Br K   \ar[r]  & \bigoplus_{\gamma \in {{\mathcal X}}^{(1)}} \hskip1mmH^1(\kappa(\gamma),\Z/2)
}\]
o\`u les fl\`eches verticales de droite sont des fl\`eches d'oubli et la fl\`eche verticale
de gauche est l'identit\'e, et la commutativit\'e de ce diagramme assure $\beta=\alpha \in \Br K$.

\medskip

Pour tout $\gamma$ de codimension 1 sur $\mathcal Y$,
on a  donc
$$ (A(P_{\gamma})) =  (\rho,b)   \in \Br K_{\gamma}.$$
Notons   $f =X_{1}^2-aY_{1}^2 \in K(E)^{\times}$.
On a donc
$$(f(P_{\gamma}),b)= (\rho,b) \in \Br K_{\gamma}.$$

On peut donc r\'esoudre l'\'equation
$$f(P_{\gamma})=\rho (U^2-bV^2) \neq 0$$
sur $K_{\gamma}$.

On trouve donc que la $K$-vari\'et\'e $W$ d\'efinie par
$$X_{1}^2-aY_{1}^2 = \rho  (U^2-bV^2) \neq 0$$
et
$$(X_{1}^2-aY_{1}^2)(X_{2}^2-bY_{2}^2) = c (X_{3}^2-abY_{3}^2) \neq 0,
$$
a des solutions dans tous les $K_{\gamma}$ pour  tout $\gamma$
de codimension 1 sur  $\mathcal Y$.

 La $K$-vari\'et\'e $W$ est un torseur sur la $K$-vari\'et\'e $E$
 sous le $K$-tore
$R^1_{K(\sqrt{b})/K} \G_{m}$, le morphisme structural \'etant donn\'e
par l'oubli de la premi\`ere \'equation.

Apr\`es changement de variables,
$$(U+\sqrt{b} V)(X_{2}+\sqrt{b} Y_{2})= X_{4}+\sqrt{b} Y_{4}$$
 la $K$-vari\'et\'e $W$ est d\'efinie par le syst\`eme
$$X_{1}^2-aY_{1}^2 = \rho (U^2-bV^2) \neq 0$$
$$\rho.(X_{4}^2-bY_{4}^2) =  c (X_{3}^2-abY_{3}^2) \neq 0.$$
Ceci d\'efinit le produit de deux $K$-vari\'et\'es, chacune
un c\^one \'epoint\'e sur une quadrique lisse de dimension  2,
donn\'ee par une forme quadratique diagonale dont les
coefficients poss\`edent la propri\'et\'e : la r\'eunion de leurs
supports et du support de la fibre sp\'eciale est un diviseur
\`a croisements normaux et dont deux composantes irr\'eductibles
se coupent en au plus un point.

Montrons que pour toute valuation discr\`ete $v$ sur $K$,
une telle quadrique $q=0$ admet un point dans le hens\'elis\'e $K_{v}$.
Soit $S_{v} \subset K_{v}$ l'anneau de la valuation $v$.
Comme $k$ est s\'eparablement clos, $R$ hens\'elien et $\mathcal X \to \Spec R$
projectif,
 cet anneau est centr\'e sur un point  $x \in {\mathcal X}$. 
 On a donc une
inclusion locale $O_{\mathcal{X},x} \subset S_{v}$.
Si $x$ est un point de codimension 1 de $\mathcal X$,
l'\'enonc\'e est clair. Supposons que $x$ est un point de codimension 2.
Le corps r\'esiduel en $x$ est alors s\'eparablement clos.
Si en $x$, le quotient de deux des 4 coefficients de $q$ 
est une unit\'e \`a multiplication par un carr\'e pr\`es, alors 
dans $K_{v}$, c'est un carr\'e et la
quadrique $q=0$ a un point dans $K_{v}$. 
Si aucun de ces quotients n'est une unit\'e \`a multiplication par un carr\'e pr\`es,
l'hypoth\`ese de croisements normaux fait que l'on peut supposer que
ces 4 coefficients sont $(\pi,\delta, u\pi\delta ,v)$ avec $u,v$ des unit\'es
en $x$ et $(\pi,\delta )$ des g\'en\'erateurs de l'id\'eal maximal.
Mais la forme quadratique diagonale $<\pi,\delta , u\pi\delta ,v>$
n'a pas de z\'ero non trivial dans le hens\'elis\'e $K_{\pi}$,
car la classe de $\delta $ dans le corps des fonctions de $O_{{\mathcal X},x}/\pi$
n'est pas un carr\'e. Ce cas est donc exclu. Ainsi
 $q=0$  admet des points dans tous les
hens\'elis\'es $K_{v}$.

Dans le cas (b) (cas \og local \fg),  le th\'eor\`eme \cite[Thm. 3.1]{CTOjPa} sur les formes quadratiques
de rang 3 ou 4  garantit  l'existence d'un $K$-point sur $q=0$.
Appliquant ceci \`a chacune des deux quadriques, on obtient $W(K) \neq \emptyset$ et donc $E(K)\neq \emptyset$.

Pla\c cons-nous dans le cas (a) (cas \og semi-global \fg).
Une forme quadratique  $q=<1,a,b,abc>$ de rang 4 a un z\'ero non trivial sur
un corps $K$ si et seulement si la forme quadratique ternaire $<1,a,b>$ a un z\'ero  non trivial sur
le corps $K_{1}=K(\sqrt{c}) $. Dans la situation ici \'etudi\'ee,
 il existe un anneau de valuation discr\`ete hens\'elien $R_{1}$
\`a corps r\'esiduel s\'eparablement clos et un sch\'ema r\'egulier ${\mathcal X}_{1}$
projectif et plat sur $\Spec R_{1}$, \`a fibre g\'en\'erique g\'eom\'etriquement connexe
et lisse, dont le corps des fonctions est $K_{1}$. La forme quadratique
$<1,a,b>$ a des points dans tous les hens\'elis\'es $K_{1w}$.
Ainsi l'alg\`ebre de quaternions $(a,b) \in \Br K_{1} $ est en particulier triviale dans tous les 
$\Br K_{1y}$ pour $y$ parcourant les points de codimension 1 de ${\mathcal X}_{1}$.
Sa classe est donc dans $\Br {\mathcal X}_{1}$, et ce groupe est nul (\cite[Cor. 1.10]{CTOjPa},
Prop.  \ref{blochogus} ci-dessus).
Ainsi $<1,a,b>$  est isotrope sur $K_{1}$, et donc $q$ l'est sur $K$. 
On conclut comme ci-dessus
$W(K) \neq \emptyset$ et donc
 $E(K)\neq \emptyset$.
\end{proof}

\begin{cor}\label{arbreimpliquepoint}
 Si la fibre sp\'eciale r\'eduite est un arbre, et si
 $E(K_{\gamma}) \neq \emptyset$ pour tout $\gamma \in {\mathcal X}^{(1)}$,
 alors $E(K) \neq \emptyset$.
\end{cor}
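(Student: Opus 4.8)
Le plan est de d\'eduire cet \'enonc\'e du th\'eor\`eme \ref{descente} : il suffit d'exhiber une famille $\{P_\gamma \in E(K_\gamma)\}_{\gamma \in \mathcal{X}^{(1)}}$ telle que $\{\partial_\gamma(A(P_\gamma))\}$ soit dans le noyau de $\oplus_\gamma H^1(\kappa(\gamma),\Z/2) \to \oplus_M \Z/2$. On peut supposer qu'aucun de $a$, $b$, $ab$ n'est un carr\'e dans $K$, sans quoi $E(K) \neq \emptyset$. Observons d'abord que \emph{tout} corps r\'esiduel $\kappa(\gamma)$, pour $\gamma \in \mathcal{X}^{(1)}$, v\'erifie $\cd_{2}\kappa(\gamma) \leq 1$ : si $\gamma$ est une composante de la fibre sp\'eciale, parce que $k$ est s\'eparablement clos, et si $\gamma$ est horizontale, parce que $\kappa(\gamma)$ est alors une extension finie du corps des fractions de $R$, de $2$-dimension cohomologique $\leq 1$. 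La proposition \ref{valeursdeA}, en tous ses points (i)--(v), est donc disponible en tout $\gamma$ ; jointe \`a l'hypoth\`ese $E(K_\gamma) \neq \emptyset$, elle donne que $\partial_\gamma(A(P_\gamma)) = 0$ pour tout $\gamma$ hors du support de $\operatorname{div}(abc)$, et que pour $\gamma$ dans ce support le r\'esidu $\partial_\gamma(A(P_\gamma))$ d\'ecrit, quand $P_\gamma$ parcourt $E(K_\gamma)$, une classe lat\'erale du sous-groupe de $\kappa(\gamma)^{\times}/\kappa(\gamma)^{\times 2}$ engendr\'e par $(-1)^{v_\gamma(a)v_\gamma(b)}\partial_\gamma((a,b))$.

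Je me ram\`ene ensuite \`a un mod\`ele commode. Par r\'esolution plong\'ee on peut \'eclater $\mathcal{X}$ en une suite de points ferm\'es \emph{situ\'es sur la fibre sp\'eciale} jusqu'\`a ce que la r\'eunion r\'eduite de la fibre sp\'eciale et de $\operatorname{div}(a)\cup\operatorname{div}(b)\cup\operatorname{div}(c)$ soit un diviseur \`a croisements normaux stricts. Chacun de ces \'eclatements ajoute une feuille au graphe dual de la fibre sp\'eciale ou subdivise l'une de ses ar\^etes ; ce graphe \'etant un arbre par hypoth\`ese, il le reste. Comme $R$ est hens\'elien, l'adh\'erence dans $\mathcal{X}$ d'une courbe horizontale est le spectre d'un anneau de valuation discr\`ete hens\'elien, donc rencontre la fibre sp\'eciale en un seul point ; avec les croisements normaux, ceci force toute composante horizontale de $\operatorname{div}(abc)$ \`a \^etre une feuille de l'arbre et interdit \`a deux composantes horizontales de se rencontrer. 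Le graphe dual de $\operatorname{supp}(\operatorname{div}(abc))\cup X_0$ sur ce nouveau mod\`ele est donc encore un arbre. De plus $E(K_\gamma)\neq\emptyset$ persiste pour tout $\gamma$ : pour une courbe exceptionnelle $\lambda$, ou bien l'une de $v_\lambda(a)$, $v_\lambda(b)$ est impaire et la proposition \ref{pointslocaux}(b) conclut, ou bien elles sont toutes deux paires, et alors, une unit\'e de $\mathcal{O}_{\mathcal{X},M}$ se restreignant en un carr\'e sur le ${\mathbf P}^1_{k}$ exceptionnel, l'un de $a$, $b$, $ab$ est un carr\'e dans $K_\lambda$. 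Sur ce mod\`ele $\operatorname{div}(abc)$ est toujours \`a croisements normaux stricts et le morphisme vers $\Spec R$ reste du type (a) ou (b) ; le th\'eor\`eme \ref{descente} lui est applicable, et il suffit d'y construire la famille voulue.

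Le c{\oe}ur de l'argument est cette construction. D'apr\`es le premier paragraphe, $\{\partial_\gamma(A(P_\gamma))\}$ est support\'e sur les composantes de $\operatorname{div}(abc)$, et le second r\'esidu $\partial_{\gamma,M}(\partial_\gamma(A(P_\gamma)))$ est nul sauf aux points $M$, en nombre fini, o\`u $\overline{\gamma}$ rencontre une autre composante de $\operatorname{supp}(\operatorname{div}(abc))\cup X_0$. Il faut choisir les $P_\gamma$ de sorte qu'en chacun de ces points $M$ les contributions des deux courbes passant par $M$ s'annulent dans $\Z/2$. Je proc\`ede en enracinant l'arbre et en descendant depuis la racine : \`a une courbe $\gamma$ atteinte depuis son sommet p\`ere $\gamma_0$ par l'ar\^ete $M_0$, la valeur d\'ej\`a fix\'ee $\partial_{\gamma_0,M_0}(\partial_{\gamma_0}(A(P_{\gamma_0})))$ impose la valeur requise en $M_0$ ; on la r\'ealise gr\^ace \`a l'unique degr\'e de libert\'e $\Z/2$ fourni par la proposition \ref{valeursdeA}(v), apr\`es quoi la loi de r\'eciprocit\'e de Weil sur la courbe projective $\overline{\gamma}$ propage la compatibilit\'e aux points reliant $\gamma$ \`a ses fils --- c'est exactement le m\'ecanisme de la d\'emonstration de la proposition \ref{eclat}, qui en est le cas d'un seul \'eclatement. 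Lorsque $\gamma$ ne porte aucune libert\'e (c'est-\`a-dire lorsque $(a,b)$ est non ramifi\'ee le long de $\gamma$), on v\'erifie que la contrainte en $M_0$ est automatiquement satisfaite, les r\'esidus en jeu \'etant des r\'esidus des classes globales $(a,b)$, $(b,c)$, $(c,a)$, dont les r\'esidus it\'er\'es sont nuls. L'hypoth\`ese d'arbre est pr\'ecis\'ement ce qui exclut toute obstruction cyclique \`a ce choix inductif. Une fois la famille obtenue, le th\'eor\`eme \ref{descente} fournit $E(K)\neq\emptyset$.

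Le principal obstacle sera l'organisation de cette derni\`ere r\'ecurrence : s'assurer que l'unique $\Z/2$ de libert\'e en chaque composante, coupl\'e par les relations de r\'eciprocit\'e de Weil, suffit \`a tuer tout \'ecart local le long de l'arbre, et disposer proprement des composantes d\'epourvues de libert\'e.
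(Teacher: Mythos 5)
Votre démonstration est correcte et suit pour l'essentiel la même stratégie que celle du texte : réduction au cas où aucun de $a$, $b$, $ab$ n'est un carré, constat que chaque composante du lieu de ramification ne porte qu'un degré de liberté $\Z/2$ (propositions \ref{valeursdeA} et \ref{toutZmod2}), choix de proche en proche des $P_{\gamma}$ le long de l'arbre formé par ces composantes, puis application du théorème \ref{descente}. Les seules différences sont cosmétiques : l'étape de résolution plongée est superflue puisque l'hypothèse de croisements normaux stricts fait déjà partie des hypothèses permanentes du paragraphe, et l'invocation de la réciprocité de Weil sur chaque composante n'est pas nécessaire pour la récurrence (elle ne sert que dans la proposition \ref{eclat}).
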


\begin{proof}
 Soit $S$ l'ensemble fini des points ferm\'es de 
  $\mathcal X$ o\`u aucun des $a$, $b$ ou $ab$
n'est   le produit d'une unit\'e par un carr\'e dans $K$.
\`A chaque tel point $x$ sont attach\'ees deux courbes lisses int\`egres,
celles  qui sont dans les supports de la r\'eunion du diviseur de  $a$ et de $b$
et passent par $x$, d\'efinies localement en $x$ par les
  $\pi,\delta $ du paragraphe  \ref{calculobsrecip}.
  Soit $T$ l'ensemble de ces courbes.
Ces courbes sont deux \`a deux transverses. 
Comme $R$ est hens\'elien, si une  courbe de $T$  n'est pas une composante
de la fibre sp\'eciale, elle rencontre la fibre sp\'eciale en exactement un point.
De l'hypoth\`ese que la fibre sp\'eciale est un arbre et des observations ci-dessus
 r\'esulte que le graphe dont les sommets sont les courbes de $T$
 et les ar\^etes les intersections de deux telles courbes lorsqu'elles sont dans $S$
 est un arbre.
 
Sur chaque composante connexe de cet arbre, partant d'un point $\gamma$
quelconque et d'un point $P_{\gamma}$ de $E(K_{\gamma})$, on peut 
d'apr\`es la proposition \ref{toutZmod2}
choisir les
points $M_{\delta }$ sur tout sommet  voisin $\delta $  de fa\c con \`a ce que
l'on ait en l'intersection $M$ de $\gamma$ et $\delta $ la formule
$$\partial_{M}(\partial_{\gamma}(P_{\gamma})) + \partial_{M}(\partial_{\delta}(M_{\delta}))=0.$$
On continue de proche en proche, ce qui est possible car le graphe d\'efini sur $T$
est un arbre. Pour les courbes $\gamma$ qui n'appartiennent pas \`a $T$, on choisit
un point $P_{\gamma} \in E(K_{\gamma})$ quelconque.

Les calculs du paragraphe \ref{calculobsrecip}
 montrent alors que
la famille $\{\partial_{\gamma}(A(P_{\gamma}))    \}$ pour $\gamma$
parcourant les points de codimension 1 de $\mathcal X$ est 
dans l'homologie du complexe de Bloch-Ogus.

On conclut alors en utilisant le th\'eor\`eme \ref{descente}. 
\end{proof}

\begin{ex}
Dans la situation \og locale \fg, l'hypoth\`ese que la fibre sp\'eciale
forme un arbre est satisfaite 
si la singularit\'e de $R$ est rationnelle. C'est le cas par exemple
si $R$ est r\'egulier, par exemple si $R=\C[[X,Y]]$.

 Dans la situation \og semi-globale  \fg,   l'hypoth\`ese que la fibre sp\'eciale
forme un arbre est satisfaite  si la fibre g\'en\'erique est la droite projective
sur le corps des fractions de $R$. Elle est aussi satisfaite si
la fibre g\'en\'erique de ${\mathcal X}/R$ est une courbe elliptique
dont le mod\`ele de Kodaira-N\'eron sur $R$ (cf. \cite{silverman}) n'a pas de lacet,
ce qui  exclut les types $I_{n}, n \geq 1$.
\end{ex}

\begin{cor}
Supposons qu'en tout point 
 $\gamma \in  {\mathcal X}^{(1)}$ l'un de $a$, $b$ ou $ab$ est un
 carr\'e dans $K_{\gamma}$. Alors
 $E(K) \neq \emptyset$.
\end{cor}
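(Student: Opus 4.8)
The plan is to deduce the corollary from the descent Théorème~\ref{descente}: it suffices to exhibit a family of local points $\{P_\gamma \in E(K_\gamma)\}_{\gamma \in \mathcal{X}^{(1)}}$ for which the residue family $\{\partial_\gamma(A(P_\gamma))\}$ lies in the kernel of $\bigoplus_{\gamma \in \mathcal{X}^{(1)}} H^1(\kappa(\gamma),\Z/2) \to \bigoplus_{M \in \mathcal{X}^{(2)}} \Z/2$. (If one of $a$, $b$, $ab$ is itself a square in $K$ we are done at once by Lemme~\ref{facile}(i), but the argument below does not need this reduction.)

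First I would produce the local points. For $\gamma \in \mathcal{X}^{(1)}$, the hypothesis supplies one of $a$, $b$, $ab$, say $d$, that is a square in $K_\gamma$; by Lemme~\ref{facile}(i) over $K_\gamma$ one writes $c = x^2 - d y^2$ with $x, y \in K_\gamma$, and setting the pair of coordinates attached to $d$ equal to $(x,y)$ and the remaining four equal to $1, 0, 1, 0$ yields a point of $E(K_\gamma)$. Thus such a family $\{P_\gamma\}$ exists; fix any one of them.

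Membership of $\{\partial_\gamma(A(P_\gamma))\}$ in the asserted kernel is a condition at each closed point, namely $\sum_{\gamma \ni M} \partial_{\gamma,M}(\partial_\gamma(A(P_\gamma))) = 0$ in $\Z/2$ for every $M \in \mathcal{X}^{(2)}$, the sum running over the $\gamma \in \mathcal{X}^{(1)}$ whose closure contains $M$. Fixing $M$, I would apply Proposition~\ref{partoutcarrelocal} to the excellent two-dimensional regular local ring $R_M = \mathcal{O}_{\mathcal{X},M}$ (in which $2$ is invertible, since $k$ has characteristic $\ne 2$): its height-one primes are exactly those $\gamma$, and because $(R_M)_\gamma = \mathcal{O}_{\mathcal{X},\gamma}$ its henselization has fraction field $K_\gamma$; the standing normal-crossings hypothesis of this section makes the union of the reduced divisors of $a$ and $b$ a normal-crossings divisor on $\Spec R_M$; and the hypothesis of the corollary says that at each such $\gamma$ one of $a$, $b$, $ab$ is a square in $K_\gamma$. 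Proposition~\ref{partoutcarrelocal} then gives precisely the vanishing of $\sum_{\gamma \ni M} \partial_{\gamma,M}(\partial_\gamma(A(P_\gamma)))$ for our (arbitrary) family. Hence $\{P_\gamma\}$ satisfies the hypothesis of Théorème~\ref{descente}, and $E(K) \ne \emptyset$.

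The whole thing is short once assembled; the only point calling for a moment's care is the last step — recognising that belonging to the Bloch--Ogus kernel is a local condition at the closed points $M$ of $\mathcal{X}$, and that at each $M$ it is exactly the conclusion of Proposition~\ref{partoutcarrelocal} applied to the localized ring $\mathcal{O}_{\mathcal{X},M}$, after which no computation remains.
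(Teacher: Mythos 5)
Your proof is correct and follows the same route as the paper: local points exist by Lemme~\ref{facile}(i), the kernel condition follows from Proposition~\ref{partoutcarrelocal} applied at each closed point, and one concludes by the Théorème~\ref{descente}. You merely make explicit the localization at each $M \in \mathcal{X}^{(2)}$ and the construction of the points $P_\gamma$, which the paper leaves implicit.
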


\begin{proof}
L'hypoth\`ese implique clairement que l'on a $E(K_{\gamma}) \neq \emptyset$
pour tout $\gamma \in {\mathcal X}^{(1)}$.
D'apr\`es  la proposition  \ref{partoutcarrelocal},    l'hypoth\`ese implique aussi
que pour toute famille 
$\{P_{\gamma} \in E(K_{\gamma}) \}_{\gamma \in {\mathcal X}^{(1)}}$
  la famille
$\{ \partial_{\gamma}(A(P_{\gamma})) \}$ 
 est
 dans le noyau de
$$ \oplus_{\gamma \in {\mathcal X}^{(1)}} H^1(\kappa(\gamma),\Z/2) \to \oplus_{M \in {\mathcal X}^{(2)}} \Z/2.$$
On conclut alors en utilisant le th\'eor\`eme \ref{descente}. 
\end{proof}

\begin{rema}

Soit $K$ un corps de nombres, $a,b,c \in K^{\times}$, puis 
$E$ la $K$-vari\'et\'e d\'efinie par
 $$(X_{1}^2-aY_{1}^2)(X_{2}^2-bY_{2}^2)(X_{3}^2-abY_{3}^2)=c.$$
et $Z$ une $K$-compactification lisse de $E$.
En utilisant la suite
exacte 
$$ 0 \to  \Br K \to \bigoplus_{v \in \Omega_{K}} \Br K_{v} \to \Q/\Z \to 0$$
de la th\'eorie du corps de classes, la d\'emonstration ci-dessus,
fortement simplifi\'ee car on est en dimension 1, montre, via une descente, que
l'obstruction de Brauer-Manin au principe de Hasse pour 
$Z$ est la seule obstruction \`a l'existence d'un point rationnel.
C'est un cas particulier d'un th\'eor\`eme sur   les espaces principaux
homog\`enes de $K$-tores \cite[Cor. 8.7]{sansuc}. Ceci nous am\`ene \`a poser la question :
\medskip

{\it  Le th\'eor\`eme~\ref{descente}
est-il  un cas particulier d'un th\'eor\`eme g\'en\'eral sur les espaces principaux
homog\`enes de tores ?}

\end{rema}

\bigskip

{\bf Remerciements}. Le travail sur cet article a commenc\'e en 2010. 
Pour de nombreuses discussions, nous remercions 
David Harari, David Harbater, Yong Hu et Alena Pirutka.
Les institutions suivantes nous ont permis d'avancer dans ce projet :
l'Universit\'e Emory   (Atlanta), l'Universit\'e Paris Sud (Orsay), et  
 le Centre Interfacultaire Bernoulli 
de l'\'Ecole Polytechnique F\'ed\'erale de Lausanne.
Pour ce travail, J.-L. Colliot-Th\'el\`ene a partiellement b\'en\'efici\'e d'une aide de
l'Agence Nationale de la Recherche portant la r\'ef\'erence ANR-12-BL01-0005.
R. Parimala and  V. Suresh are partially supported by the
National Science Foundation grants DMS-1001872 and
DMS-1301785 respectively.

\end{document}